\newtheorem{lemma}{Lemma}
\newtheorem{proposition}{Proposition}
\newtheorem{example}{Example}
\newtheorem{observation}{Observation}
\title{On picking operations in e-commerce warehouses: Insights from the complete-information counterpart}
\author{
  Catherine Lorenz\\
  Chair of Management Science/Operations and Supply Chain Management,\\ University of Passau, Passau, Germany\\
  \texttt{catherine.lorenz@uni-passau.de} \\
   \And
  Alena Otto\\
  Chair of Management Science/Operations and Supply Chain Management,\\ University of Passau, Passau, Germany\\
   \And
  Michel Gendreau\\
  Department of Mathematics and Industrial Engineering and CIRRELT,\\
Polytechnique Montr\'{e}al, Montr\'{e}al, Canada
}
\begin{document}
\maketitle

\begin{abstract}
In the flourishing era of e-commerce, major players process \textit{dynamically} incoming orders in \textit{real-time}, thereby balancing the delicate trade-off between fast- and low-cost deliveries. They already use advanced anticipation techniques, like AI, to predict certain characteristics of future orders. However, at the warehousing level, there are still no unambiguous recommendations on how to integrate \textit{anticipation} with intelligent \textit{online optimization algorithms}, nor an unbiased benchmark to assess the \textit{improvement potential} of advanced anticipation over myopic optimization techniques, as  \textit{optimal online solutions} are usually unavailable. 

In this paper, we compute and analyze \textit{Complete-Information Optimal policy Solutions (CIOSs)}, of an exact \textit{perfect anticipation algorithm} with full knowledge of future customer orders' arrival times and ordered items, for picking operations in 
\textit{picker-to-parts} warehouses. We provide analytical properties of perfect anticipation policies and leverage CIOSs to uncover \textit{decision patterns} that enhance simpler algorithms improving both \textit{makespan} (i.e. costs through pickers' working time) and average order \textit{turnover} (i.e. delivery speed). Using a metric similar to the gap to optimality, we quantify the gains from optimization elements and the improvements remaining for advanced anticipation mechanisms. To compute CIOSs, we design tailored dynamic programming algorithms for the Order Batching, Sequencing, and Routing Problem with Release Times (OBSRP-R), which is the first exact algorithm for OBSRP-R in the literature.

Some of the \textit{actionable advice} from our analysis focuses on the largely overlooked \textit{intervention} - the dynamic adjustment of started batches, and the \textit{strategic relocation} of an idle picker towards future picking locations. The former affected over 60\% of orders in CIOSs, integrates easily into myopic policies requiring moderate technological investment, and demonstrated consistent improvements across various policies and warehouse settings. The latter occurred before 39\%-62\% CIOS orders, and a simple implementation could decrease a myopic policy's gap to optimal turnover by 4.3\% on average (up to 14\% in some cases). Notably, our analysis challenges one of the most controversially and heatedly discussed concepts in the warehousing literature: \textit{strategic waiting}. Our study reveals why the latter resembles an "all-in" gamble and harms both makespan and order turnover when intervention is allowed.
\end{abstract}

\keywords{Dynamic Programming, Order Picking, Warehouses, Dynamic Order Arrival, Complete-Information Optimal Policy}


\section{Introduction} \label{sec:intro}

Worldwide warehousing operations cost businesses about \euro300 billion annually \citep{Hermannetal2019}. E-commerce, a prevalent sector, presents challenges due to \textit{dynamically arriving} customer orders and demands for both free and fast deliveries \citep{henderson2020}. Meeting these demands is expensive as it requires allocating more resources to handle customer requests individually, thereby sacrificing economies of consolidation and scale in warehousing and logistics. Staying \textit{low-cost, while being fast} is a challenge. Not surprising many express delivery start-ups went bankrupt \citep{meyersohn2023}. Conversely, Amazon has successfully reduced delivery costs and speed, boosting its sales and profits, making it one of the world's largest companies by market capitalization alongside Alphabet, Microsoft, Apple, and Nvidia. 
Amazon's CEO, Andy Jassy, stated: “We’ve re-evaluated every part of our fulfillment network over the last year. We obviously like the results, but don’t think we’ve fully realized all the benefits yet” \citep{drummer2023}. Amazon continues expanding its global same-day delivery services \citep{herrington2024}.

\textit{Order picking} is a key component of same-day order fulfillment. In traditional business models, where fulfillment took several days, order picking was often a downstream activity. Orders were accumulated overnight, and delivery plans were created by scheduling backward from delivery deadlines. Order picking was planned only after setting vehicle schedules, leading to unnecessary costs and high space requirements for staging — the intermediate storage between picking and outbound logistics \citep{rijaletal2023}. In same-day delivery models, this approach must be re-evaluated. One option is to reduce order accumulation periods to a few hours while integrating outbound deliveries and order picking. Alternatively, order accumulation periods can be eliminated, by treating both order picking and outbound deliveries as \textit{dynamic} problems with continuously arriving orders. Growing evidence suggests that the arising uncertainty can be managed efficiently \citep{Bertsimas1993}. 
For a detailed discussion on dynamic outbound deliveries, see \citet{azietal2012} and \cite{vocciaetal2019}. This paper focuses on dynamic order picking in warehouses.

\begin{wrapfigure}{r}{0.53\textwidth}
    \centering
    \includegraphics[width=0.45\textwidth]{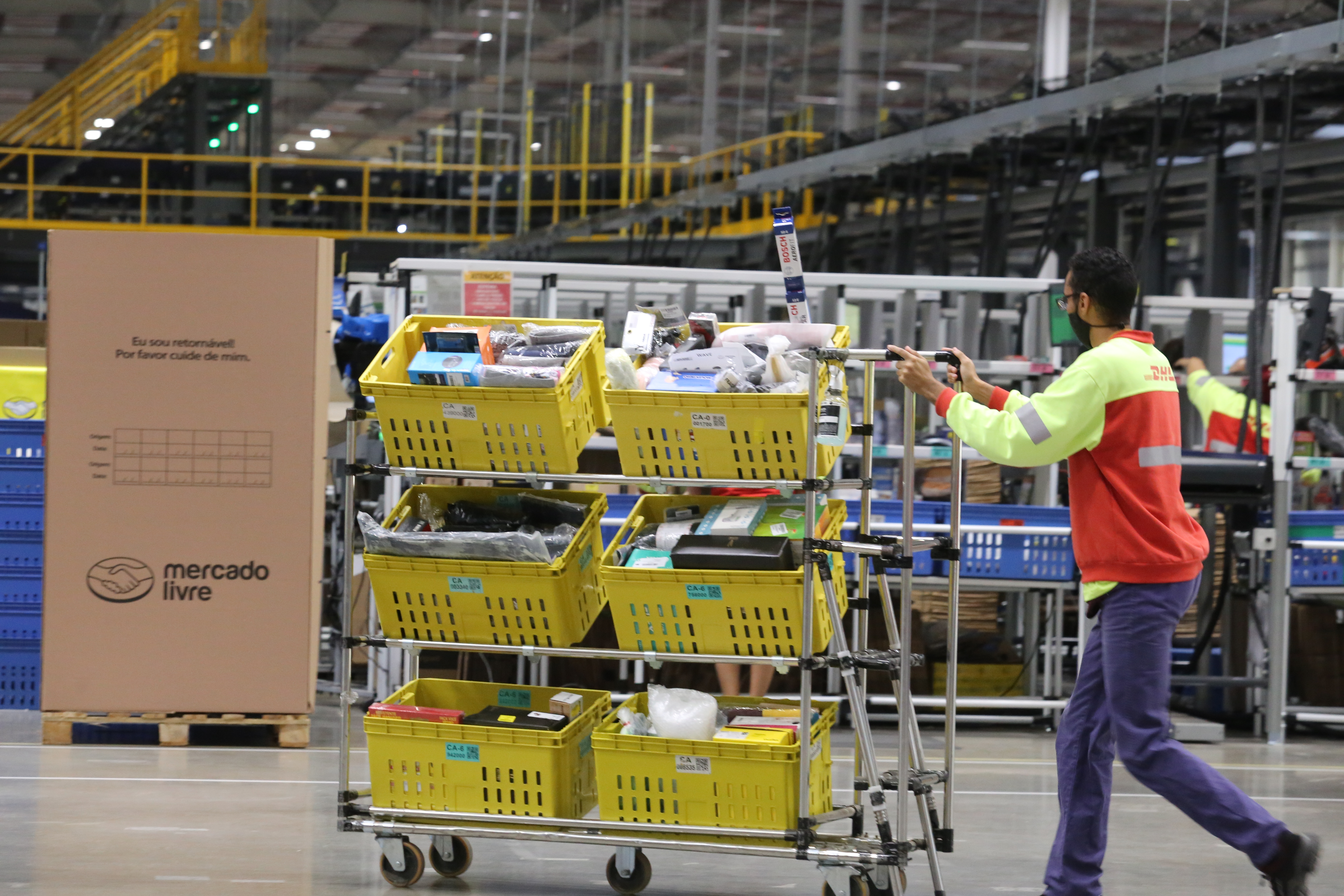}
    \\ \tiny{Source: Governo do Estado de São Paulo. License: CC BY 2.0}
    \caption{Order picking in a picker-to-parts warehouse}
    \label{fig:puscart}
\end{wrapfigure}

A frequently used warehouse design is \textit{picker-to-parts}, where workers travel to collect items (see Figure~\ref{fig:puscart}). Despite advances in automation \citep[see][for a survey]{azadehetal2019}, these warehouses remain popular due to their flexibility in handling demand fluctuations, such as Black Friday \citep{Boysen2019}. Order picking in such warehouses is classified as the \textit{online order batching, sequencing, routing, and waiting problem (OOBSRWP)} \citep{Pardo2023}, which emphasizes dynamically arriving orders and the key decision components: batching (picking multiple orders jointly to reduce costs), sequencing (selecting the next batch), routing (planning the picker's path), and waiting (scheduling pauses to accommodate future orders). 

In this context, seeking \textit{optimal policies} can be highly beneficial -- not for direct implementation, as they are often computationally impractical for real-world instances -- but for revealing decision patterns that can enhance simpler algorithms. 
However, due to the complexity of OOBSRWP, its optimal policies remain unknown, which is common for many dynamic  problems due to the \textit{curse of dimensionality} \citep[see][]{powell2011}. A useful approach to approximate an optimal policy is to compute \textit{complete-information optimal solutions (CIOSs)} of the perfect-information version of OOBSRWP where all order arrival times and characteristics are known in advance. In this paper, we analyze routing, batching, and anticipation strategies employed in CIOSs.

We use this analysis to clarify key questions around effective dynamic policies, including, for example, the configuration of \textit{strategic waiting}. The recent survey of \citet{Pardo2023} describes strategic waiting as the \textit{`by far least studied activity'}, which is expected to have \textit{`a deep influence on the performance of the overall [planning] method [for warehousing operations]'}. Strategic waiting involves intentionally delaying actions to optimize future operations. For instance, if the order queue is short, it may be beneficial to wait for more orders to arrive, allowing for joint picking, which can reduce both completion time and average order turnover. Queueing theory shows that even fully utilized systems have idle times and short queues, providing opportunities for waiting. 
Despite various proposed policies for strategic waiting, the findings remain inconclusive \citep[see][for a summary]{GilBorras2024}.

Another key motivation for designing a perfect-anticipation policy—optimal under perfect information—is to assess the improvement potential of \textit{anticipation algorithms}, such as AI-based methods that predict future orders. Companies like Zalando and Amazon already use AI to forecast and influence customer orders through personalized recommendations and search result sorting \citep{ oneill2024}. 
Emerging paradigms allow direct integration of predictions into optimization algorithms \citep[cf.][]{sadanaetal2024}. To evaluate the improvement potential of these algorithms, we need \textit{perfect anticipation gaps}, which compare them to CIOSs, similar to \textit{optimality gaps} in static optimization. Perfect anticipation gaps allow us to identify scenarios where simple myopic heuristics already approach the best possible outcomes, and where advanced anticipation mechanisms are necessary for substantial improvements.

This paper's analysis relies on insights from perfect-anticipation algorithms with long computational times, as the studied optimization problems are hard-to-solve (NP-hard in the strong sense, see Section~\ref{sec:problem_desc}). Therefore, the focus is narrowed to the most essential aspects:
\begin{itemize}
\item We concentrate on a warehouse's \textit{picking operations}, from order arrival until order delivery to the depot for packaging, highlighting the \textit{sort-while-pick} system, where all items of a customer order are collected in the same bin. 
We analyze two common objectives: \textit{completion time}, or \textit{makespan} minimization (cost-oriented, focused on the picker's time) and the minimization of the \textit{average order turnover time} (service-oriented, focused on fast deliveries). 
For a detailed discussion of warehousing objectives, we refer the reader to \citet{chenetal2010} and \citet{dekosterandbalk2008}.

\item A common warehousing practice designed to avoid congestion and competition among pickers accessing items stored in the same location, is to assign those items to workers during an upstream planning stage \citep[see][]{Schiffer2022}. Also comparative analyses in the literature reveal no differences in the performance rankings of alternative heuristic policies between single-picker and multiple-picker environments 
\citep{GilBorras2024}. Based on this, we focus on the operations of a \textit{single picker}. 
\item Besides traditional class-based storage, where each product item is assigned a location based on its picking frequency, many e-commerce warehouses use a scattered pattern known as mixed-shelves storage, where individual items are placed at multiple locations throughout the warehouse. This approach can reduce unproductive picker walking by increasing the likelihood that ordered items are picked together from nearby locations. Mixed-shelves storage is well-suited to e-commerce because orders  often contain only one or two items \citep{Boysen2019}, the product assortment is extensive, and demand distribution has a long tail \citep{Brynjolfssonetal2003}. The state-of-the-art advice emphasizes the importance of allocating storage space to product items during upstream planning, over the selection of picking locations when planning a picking tour 
\citep[see][]{weidingeretal2018}. Therefore, we assume the \textit{picking positions of ordered items are predetermined}, which allows our analysis to apply to both class-based and mixed-shelves storage.

\end{itemize}

As \textit{our practical contribution}, we analyze CIOSs under two key objectives critical for same-day deliveries: \textit{minimizing costs} (through the picker's working time 
represented by the makespan) and \textit{minimizing delivery speed} (represented by average turnover). We focus on decision patterns in CIOSs that improve both objectives simultaneously. Surprisingly, our detailed analysis of CIOSs downplays the importance of strategic waiting and explains why this may be the case. Instead, it offers actionable insights 
for significantly improving warehousing operations  across both objectives, 
as demonstrated through computational experiments.  While these recommendations may seem intuitive in hindsight, they have been largely overlooked in the literature. 

As \textit{our theoretical contribution}, we design a tailored exact dynamic programming  algorithm (DP) for the perfect-information counterpart of OOBSRWP, which is classified as \textit{the Order Batching, Sequencing, and Routing Problem with Release Times (OBSRP-R)} after \citet{Pardo2023}. To our knowledge, this is the first exact algorithm for OBSRP-R. Computational experiments show that standard mixed-integer programming solvers struggle even with small instances, highlighting the necessity of this tailored DP algorithm. Serving as a perfect-anticipation policy that computes CIOSs, 
our DP sets a benchmark  for emerging AI-based anticipatory approaches. 
We also identify analytical properties of optimal policies for OOBSRWP.

We begin with a literature review (Section~\ref{sec:lit_review}). Sections~\ref{sec:problem_desc} states the problem, Sections~\ref{sec:imp_details} and \ref{sec:analytics_makespan} describe the proposed DP and state analytical properties of online policies, respectively. The main section of this paper is Section~\ref{sec:experiments}, which presents detailed experimental studies and the results. Readers can skip directly to Section~\ref{sec:experiments}, which is written as a stand-alone section. Section~\ref{sec:conclusion} concludes with discussions and an outlook.

\section{Literature review}\label{sec:lit_review}
Over the past decades, warehousing logistics has been a prominent topic in optimization literature. The majority of scientific efforts have concentrated on the picking operation; for a comprehensive overview, we refer to the recent surveys by \citet{Pardo2023, Boysen2019, Vanheusden2022}.

This section outlines three key literature streams relevant to our study: insights from optimal policies for planning operations with dynamically arriving orders; exact solution approaches for OBSRP-R and related problems; the role of advanced anticipation in warehousing operations. 

\subsection{Insights from optimal policies for warehousing with dynamic order arrivals}\label{sec:review_OptP}

Although roughly one in four papers addresses warehousing operations with \textit{dynamically} arriving orders, most develop heuristic solution approaches 
and
evaluate their performance by comparing them with other, often simpler, benchmark strategies 
\citep[see][for an overview]{Pardo2023}. 
These studies show that optimal picker routing and optimal batching of available orders yield significant improvements over simpler heuristics  \citep{Henn2012, GilBorras2024}. However, optimality gaps and the potential for further improvement, e.g., by advanced anticipation, remain unclear. 

A few studies derived \textit{analytical} performance guarantees for online policies. For the batching and sequencing subproblem of OOBSRWP, \citet{Henn2012} and \citet{Alipour2018} showed that a \textit{reoptimization} policy with threshold-based \textit{waiting intervals} 
is at most 2-competitive, meaning the resulting makespan is no more than twice that of the CIOS.
\citet{Lorenza} proved that \textit{immediate myopic reoptimization} (Reopt) is an optimal policy for OOBSRWP under the makespan objective and generic stochastic conditions, based on asymptotic competitive analysis. This means that for \textit{sufficiently large} instances, the result of Reopt coincides with that of a perfect anticipation exact algorithm with a probability of one, and cannot be improved. 

The significance of strategic waiting was extensively studied for  \textit{first-come-first-served (FIFO)} order batching policies using queuing theory analysis \citep{LeDuc2007,Vannieuwenhuyse2009}. In FIFO-batching,  orders are grouped as they arrive, and 
are released as a batch in fixed time intervals or when a fixed number of orders are queued. 
In the context of FIFO-batching, 
strategic waiting increases the likelihood of forming larger batches, realizing savings from picking multiple orders together when the queue is short. 

To gain further insights into strategic waiting, \citet{Bukchin2012} developed an optimal waiting policy to minimize order tardiness and overtime costs for pickers, by analyzing the underlying Markov decision process, for orders arriving according to a Poisson point process. Due to the inherent complexity of OOBSRWP, they significantly simplified batching and routing decisions by assuming that the picking time of a batch depends only on the \textit{number} of constituent orders. 
While the chosen objective function (overtime and tardiness) 
reduces opportunity costs from waiting, the optimal policy still recommended little to no waiting, except when order lead times 
were long compared to the picking time of an order. The authors suggest this may be due to the low order arrival rates used in the computational experiments.

Since then, a variety of approaches for implementing strategic waiting have emerged, ranging from fixed time windows to complex formulas \citep[see][for an overview]{GilBorras2024}. For example, drawing on the batching machine problem,  \citet{Henn2012} evaluated several methods to schedule strategic waiting for the makespan objective. However, his experiments showed that no-wait policies performed better than those involving waiting. Overall, for more advanced and better-performing policies than FIFO-batching, the advice on strategic waiting remains mixed \citep{GilBorras2024, Henn2012}. 

Unlike the studies mentioned above, this paper derives insights on well-performing policies by analyzing an optimal policy for the perfect-anticipation counterpart of OOBSRWP.

\subsection{Exact solution approaches for OBSRP-R and related problems}\label{sec:review_OBSRPR}

OBSRP-R is a perfect-information counterpart of the online problem OOBSRWP. To the best of our knowledge, no solution approach has been developed specifically for OBSRP-R so far. Some papers provide mixed integer programs (MIPs) for OBSRP-R or its subproblems \citep[e.g., ][]{Henn2012,Alipour2018, Cals2021, GilBorras2024}.
Since these MIPs 
were unable to solve even small instances, 
we propose new MIPs in Appendix ~\ref{sec:MIPs}. 

Even without release times, no paper has proposed an exact solution approach that simultaneously addresses \textit{all} related picking decisions -- order batching, sequencing, and picker routing -- due to the problem's complexity. Adding release times increases this complexity. For example, the picker routing subproblem, which is polynomially solvable \citep{Ratliff1983, Schiffer2022}, becomes NP-hard with release times \citep{Bock2024}.  
\citet{Gademann2005}, \citet{Muter2015}, and \citet{Wahlen2023} developed exact branch-and-price algorithms for the Order Batching Problem (OBP), focusing on minimizing the single picker's travel distance.   The current state-of-the-art approach for OBP by \citet{Wahlen2023} works for any monotone routing policy, including \textit{optimal routing}. \citet{Valle2017} developed branch-and-cut algorithms based on a non-compact arc-based formulation for the joint order batching and picker routing problem (OBPR), which minimizes total distance traveled. 
Another exact approach for OBPR by \citet{Schiffer2022} minimizes the  distance in a multi-block warehouse layout with multiple depots. 
Note that the integration of release times would 
disrupt the key properties of the presented solution approaches, rendering them inapplicable to OBSRP-R.



\subsection{The role of advanced anticipation in warehousing operations }\label{sec:review_anticipation}
Recent advancements in predicting customer ordering behavior have leveraged new data sources, such as weather data, click data and page views, consumer reviews, and social media activities \citep[cf.][]{cuietal2018, huang2014, steinkeretal2017}. Progress is moving towards hourly, item-level forecasts of arriving orders \citep{daietal2022, hamdanetal2023}. 

The key to integrating anticipation into planning is to account for both immediate effects of decisions and their delayed consequences (cost-to-go). For instance, when deciding whether to dispatch a worker with a single order, the cost-to-go considers the probabilities and timings of future order arrivals, potential savings from larger batches, and quantifies the long-term impact of this decision. Estimating cost-to-go typically involves sampling and statistical aggregation techniques, including machine learning (ML) and deep learning methods \citep{berksetas2020}. 
Additionally, rather than optimizing based on given forecasts, these forecasts can be directly integrated into optimization. 
By simultaneously considering forecasting and optimization, the framework penalizes forecast errors that affect decisions while tolerating irrelevant ones \citep[see][for a discussion]{sadanaetal2024}.


A few studies have integrated ML-based anticipation methods into warehouse picking operations. \citet{DHaen2022} explored OOBSRWP with fixed delivery truck schedules, aiming to minimize average order tardiness. They used historical data to introduce dummy orders with due dates aligned to truck departures, serving as placeholders for potential future order arrivals. 
\citet{Cals2021} used deep reinforcement learning (DRL) to optimize e-commerce order batching in a combined picker-to-parts and parts-to-picker system.
\citet{shelkeetal2021} proposed an end-to-end DRL method for scenarios where anticipated orders can be picked during one of the previous night shifts.


\section{Problem statement}\label{sec:problem_desc}

In this section, we first present the details of the perfect-information problem formulation -- \textit{the order batching, sequencing and routing problem with release times (OBSRP-R)} --, followed by the online variant with dynamically arriving orders, referred to as OOBSRWP. We denote ${1,\ldots, n}, \forall n \in \mathbb{N}$ as $[n]$. The notation is summarized in Table~\ref{tab:notation}.

\subsection{Warehouse topology}\label{sec:topology}
\begin{figure}
\centering
\includegraphics[scale=0.45]{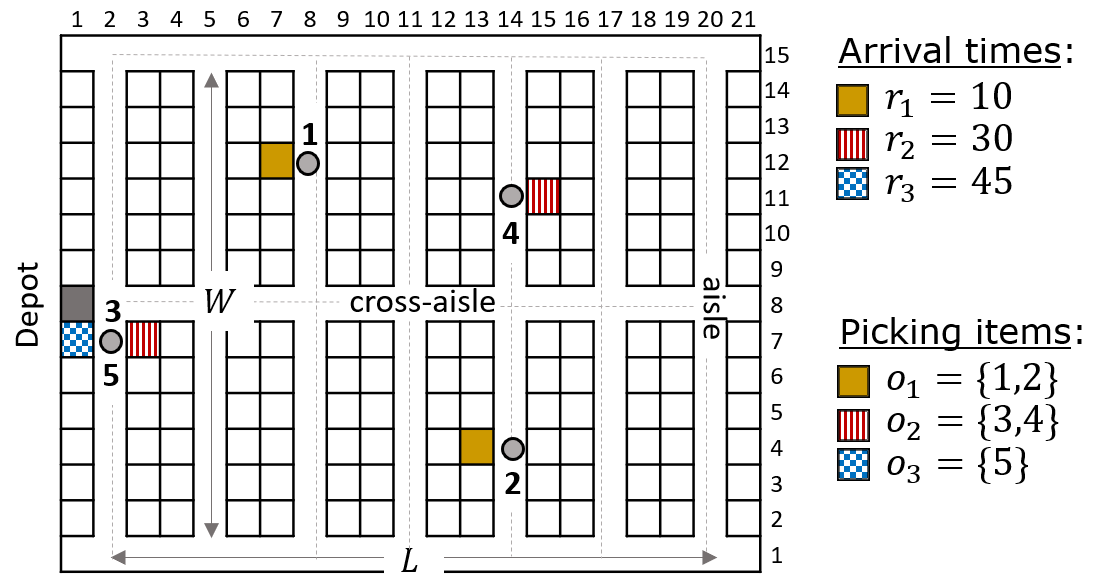}
\caption{Illustrative example of OBSRP-R} 
\scriptsize{\textit{Note. } A warehouse  with $b=3$ cross-aisles and $a=7$ aisles. The batching capacity is $c=2$, a picker speed is $v=1$ cell width per time unit, and a picking time equals $t^p=5$ time units per item. 
The instance has $n^o=3$ orders, the items of which are stored in the colored shelves. The grey circles in the adjacent aisles mark the picking locations of these items.} \label{fig:warehouse}
\end{figure}

OBSRP-R models order picking by a single picker equipped with a pushcart. For simplicity, we refer to the picker's area as a \textit{warehouse}.  The warehouse is rectangular, consisting of $a\geq 1$ vertical \textit{aisles} of length $W$, $b\geq 2$ horizontal \textit{cross-aisles} of length $L$, and a depot $l_d$ located at an arbitrary point (see Figure~\ref{fig:warehouse}). The access point from which an ordered item is retrieved is called the item's \textit{picking location}. 
Each ordered item is linked to a \textit{unique} picking location upon entering the system, so the terms \textit{item} and \textit{(picking) location} are used interchangeably. Note that the picker can only retrieve items from the aisles, not from the cross-aisles.

\subsection{Picker's operations}\label{sec:operations}

The picker starts her operation at the depot and moves through the warehouse at a constant \textit{speed}, $v$. She uses a cart with $c$ bins, as shown in Figure~\ref{fig:puscart}, with each bin dedicated to a separate order, ensuring that items from the same order are grouped in one bin, allowing for sorting while picking (a sort-while-pick system). Note that items from the same order cannot be split across different batches. 
Up to $c$ orders can be picked \textit{simultaneously} (in one \textit{batch}) improving efficiency. We denote $c$ as the \textit{batching capacity}. A batch is completed once all items are picked, and the picker returns the cart to the depot for unloading, after which a new cart is retrieved for the next batch. The retrieval of each item requires fixed \textit{pick time} $t^{p}$.

Let $d(s_1,s_2)$ represent the \textit{shortest walking distance} between any two locations, $s_1$ and $s_2$, following a rectilinear path through the aisles and cross-aisles. The \textit{triangular inequalities} are satisfied, meaning for any three locations $i, l, s$ (picking locations or depot), $d(i,s) \leq d(i,l) + d(l,s)$.

\subsection{The arrival of orders}\label{sec:orders}
OBSRP-R models a perfect-information scenario where all incoming orders, $n^o \in \mathbb{N}$, are known in advance.  Each order $o_j$ 
has a release time $r_j\in \mathbb{R^+}$ and consists of a set of items $o_j:=\{s_j^1,...,s_j^{l}\}, l\in \mathbb{N}$. Orders are indexed by their arrival times, such that $r_1\leq r_2 \leq ...\leq r_{n^o}$. For simplicity, the release time of an item $s$ in order $o_j$ is denoted as $r(s):=r_j$. Each order can have a different number of items. 

The \textit{number} and the \textit{set} of ordered items are denoted as $n^i:=\sum\limits_{j=1}^{n^o} \vert o_j \vert $ and  $S:=\bigcup_{j\in [n^o]} \{o_j\}$, respectively.

\subsection{Feasible solutions}\label{sec:solutions}
A feasible solution of OBSRP-R involves \textit{partitioning orders into batches} for joint collection, \textit{sequencing these batches}, and deciding the \textit{sequence and schedule of picking items} within each batch. 
Let $C(s, \sigma)$ denote the \textit{completion time} of an  item $s\in S$ in a solution $\sigma$, which is the time the item is placed in the cart. For simplicity, we will drop the reference to $\sigma$ when it is clear from the context. Then, a \textit{feasible solution} $\sigma$ of an OBSRP-R instance formally consists of:
\begin{itemize}
\item the picking sequence of the ordered items $\pi$, which is constructed as $\pi:=(\pi^{B_1}, \pi^{B_2}, \ldots, \pi^{B_f})$, $|\pi|=n^i$, with following components:
\begin{itemize}
\item an ordered sequence of batches  $\pi^{\text{batches}}=(B_1, B_2, \ldots, B_f)$, such that the batches form a mutually disjoint partition of the orders: $\{o_1, ..., o_{n^o}\}=B_1 \cup B_2 \cup\ldots \cup B_f, B_l\cap B_k=\emptyset \ \forall k,l \in \{1,...,f\}$; each batch contains at most $c$ orders and the number of batches $f\in \mathbb{N}$ is a decision variable.
\item for each batch $B_l$, a permutation of the items in the included orders $\pi^{B_l}$. 
\end{itemize}
\item the picking \textit{schedule} specifying the completion time $C(s)$ for each item $s\in S$, such that the routing requirements 
are respected, and no item $s$ is picked before its release time $r(s)$. 
\end{itemize}

\begin{table}[t]
    \centering
    \caption{Notation}
    \scriptsize{
    \begin{tabular}{ll}
    \toprule
    \multicolumn{2}{c}{Parameters of an OBSRP-R instance $I$}\\
    \midrule
       $a$ & Number of aisles in the warehouse \\
       $b$ & Number of cross-aisles in the warehouse \\
       $L$ & Length of a cross-aisle\\
       $W$ & Length of an aisle\\
       $l_d$ & Depot\\
       $d()$ & Distance metric between items and depot\\
       $v $ & Picker speed \\
       $t^{p}$ & Picking time to retrieve one item from its storage location and place it into the cart \\
       $c$ & Batching capacity: the maximum number of orders in a batch \\
       $n^o $ & Number of orders \\
       $o_j=\{s_j^1,...s_j^{l}\} $ & Set of items requested by the  $j^{\text{th}}$ order, $j \in [n^o], l \in \mathbb{N}$  \\
       $r_j$ &  Release time of order $o_j, j\in [n^o]$, $r_1\leq ...\leq r_n$  \\
       $r(s)$ & Release time of an item $s$, equals the release time of its respective order, $r(s):=r_j$ if $s\in o_j$\\ 
      $n^i$ &  Number of items, $n^i:=\sum\limits_{j=1}^{n^o} \vert o_j \vert $  \\ 
      $S$ & Set of all ordered items $S:=\cup_{j\in[n^o]}o_j$ \\
      $\sigma$ &Feasible solution\\
      $\pi:=(\pi^{B_1}, \pi^{B_2}, \ldots, \pi^{B_f})$ & Picking sequence of items; with $\pi^{B_i}$ the picking sequence within a batch $B_i$ \\
      $C(\sigma,s) $ / $C(\sigma,o_j) $ & Completion time of item $s$ / order $o_j$ in solution $\sigma$  ($\sigma$ is dropped if clear from the context)\\
      $z^{\text{makespan}}(\sigma)$ & Makespan (total completion time) of solution $\sigma$ \\
       $z^{\text{turnover}}(\sigma)$ & Average order turnover time of solution $\sigma$ \\
      \midrule
      \multicolumn{2}{c}{Notation used in the DP-approach}\\
      \midrule
       $O$ & Sequence of orders sorted with respect to their release times\\ 
       $\Theta_k=(s,m^o,S^{\text{batch}},O^{\text{pend}})$ & State at stage $k\in [n^i+1]\cup\{0\}$;\\
       &for $k\leq n^i$, $k$ items have been picked at this state; 
       $\Theta_{n^i+1}$ is the terminal state; \\
        & $s\in S\cup l_d$ denotes the last picked item for $k\in [n^i]$; \\
        &  $m^o\in[c]\cup\{0\}$ counts the number of orders in the current batch;\\ & set $S^{\text{batch}}\subseteq S$ accommodates  items of the orders from the current batch which have not been picked yet; \\
        &  $O^{\text{pend}}|O$ is the sequence of pending orders\\
       $X(\Theta_k)$ & Set of feasible transitions from state $\Theta_k$ \\
       $f(\Theta_k,x_k)$ & Transition function for state $\Theta_k$ at stage $k$ and transition $x_k\in X(\Theta_k)$\\
       $g^{\text{cost}}(\Theta_k,x_k) \ / \ g^{\text{comp+}}(\Theta_k,x_k)$ & Immediate cost / immediate forward-looking turnover costs of transition $x_k\in X(\Theta_k)$ at state $\Theta_k$\\
       $\Omega^{*,\text{cost}}(\Theta_k) \ / \ \Omega^{*,\text{comp+}}(\Theta_k)$ & Value /  forward-looking completion value of state $\Theta_k$\\ 
      \bottomrule
    \end{tabular}}
    \vspace{0.2cm}
    \label{tab:notation}
\end{table}

\subsection{Objectives}~\label{sec:obj_fun}
We focus on two key objectives in same-day deliveries: minimizing costs (represented by the makespan, which reflects the picker's working time and wages) and minimizing delivery speed (represented by average turnover). We refer to these objectives simply as \textit{makespan} and \textit{turnover}, respectively.

Makespan is the time when the entire picking operation is completed, i.e., when the picker returns the cart to the depot after all items have been collected. The \textit{makespan} objective is defined as:
\begin{align}
Minimize_\sigma ~
z^{\text{makespan}}(\sigma)=\max_{s\in S} \{C(\sigma,s)+\frac{1}{v}\cdot d(s,l_d)\},\label{eq:obj_pushcart_makespan}
\end{align}

Similarly, the \textit{completion time} $C(\sigma, o_j)$ of order $o_j$ in solution $\sigma$ is the time when all items in the corresponding batch have been picked, and the cart has been brought back to the depot:
\begin{align}
C(\sigma, o_j):= \max_{s \in B(\sigma,o_j)} \{ C(s) + \frac{1}{v} \cdot  d(s,l_d)\} , \label{eq:order_comp}
\end{align}
where $B(\sigma,o_j)\subseteq S$  is the set of items in batch $B_l$ if $o_j\in B_l$ in solution $\sigma$. 
The \textit{turnover} time of an order $o_j$ is the time from its arrival $r_j$ to its completion. The \textit{turnover} objective is defined as :
\begin{align}
Minimize_\sigma ~
z^{\text{turnover}}(\sigma)=& \frac{1}{n^o} \sum\limits_{j \in [n^o]} (C(\sigma,o_j) - r_j\}   
=  \frac{1}{n^o} \cdot \sum\limits_{j \in [n^o]} C(\sigma,o_j) + \frac{1}{n^o} \sum\limits_{j \in n^o} r_j \label{eq:obj_turnover}
\end{align}

Observe that the first term of (\ref{eq:obj_turnover}) represents the average order completion time, and the second term is \textit{constant} and represents the average order arrival time. Thus, minimizing turnover is equivalent to minimizing the sum of order completion times. 

For both objectives, note that an optimal picking schedule is uniquely determined by a given picking sequence $\pi$ and its corresponding batches $\pi^{\text{batches}}$. The picking schedule can be reconstructed by collecting each item at the \textit{earliest possible time} after its release time. 
Appendix~\ref{sec:MIPs} provides MIPs for OBSRP-R.

\subsection{An illustrative example of OBSRP-R}~\label{sec:example}
Table~\ref{tab:Solutions_of_ex} presents optimal solutions for the example shown in Figure~\ref{fig:warehouse} with the batching capacity of $c=2$, a picker speed of $v=1$ cell width per time unit, and a picking time of $t^p=5$ time units per item.   For the \textit{makespan} objective, the first batch contains two first orders, while order $o_3$ is picked separately as a single-order batch. The items of the first batch are picked in the sequence $(1, 2, 4, 3)$. The completion time for item 1 is $C(1)=d(l_d,1)+t^p=15$ and for item 2, $ C(2)=c(1)+d(1,2)+t^p=34$. The picker then moves to item 4, but must wait for its release time ($r_2=42$). The first batch is completed at $C(o_1)=C(o_2)=C(3)+d(3,l_d)=69$, and the total makespan is $C(o_3)=76$.  
\begin{table}
\centering
 \caption{ Optimal solutions for the example from Figure~\ref{fig:warehouse}} 
 \scriptsize{
\begin{tabular}{l|cccc}
\toprule
Solution $\sigma^*_i, \ i\in\{1,2\}$ & $\pi (\sigma^*_i)$ & $\pi^{\text{batches}} (\sigma^*_i)$ & $z^{\text{makespan}}(\sigma^*_i)$ & $z^{\text{turnover}}(\sigma^*_i)$\\
\midrule
$\sigma^*_1$: Optimal solution for minimization of $z^{\text{makespan}}$  & $(1,2,4,3,5)$ &  $(\{o_1,o_2\},\{o_3\})$ & 76 & 37.3 \\
$\sigma^*_2$: Optimal solution for minimization of $z^{\text{turnover}}$ & $(1,2,5,3,4)$ & $(\{o_1\}, \{o_3\}, \{o_2\})$ & 99 & 34.7\\
\midrule[\heavyrulewidth]
 \end{tabular}}
  \label{tab:Solutions_of_ex}
\end{table}

An optimal solution for the \textit{turnover} objective involves picking each order separately, prioritizing small orders:  $o_1$ first, followed by $o_3$, then $o_2$. The resulting average turnover is  $z^{\text{turnover}}(\sigma^*_2)=\frac{1}{3}((50-10)+(57-50)+(99-42))=34.7$.

\subsection{The online problem with dynamically arriving orders: OOBSRWP} \label{sec:online_problem}
In reality, the information about arriving orders is not perfect. For a future order $o_j$, the set of picking items ${s_j^1, ..., s_j^l}$ and the number of items $l$ are only revealed upon the order's arrival. Similarly, the total number of orders $n^o$ is unknown. The release times $r_j$ are, in fact, the dynamically revealed \textit{order arrival times}. We refer to the online variant of the problem as OOBSRWP.

\section{Dynamic program} 
\label{sec:imp_details}

This section presents tailored \textit{dynamic programming algorithms (DPs)} for OBSRP-R: an exact algorithm for the \textit{makespan} objective and a heuristic algorithm for the \textit{turnover} objective. Both algorithms share a similar framework, which is outlined below. 
Section~\ref{sec:OPT_imp} describes the associated state graph, Sections~\ref{sec:cost_labeling}-~\ref{sec:turnover_heuristic} state the associated Bellman equations, 
and Section~\ref{sec:dominancerules} introduces customized dominance relations to accelerate the DP approaches. 
Table~\ref{tab:notation} summarizes the additional notation introduced in this section.
 
\subsection{State graph} \label{sec:OPT_imp}
\begin{figure}
\centering
\includegraphics[scale=0.7]{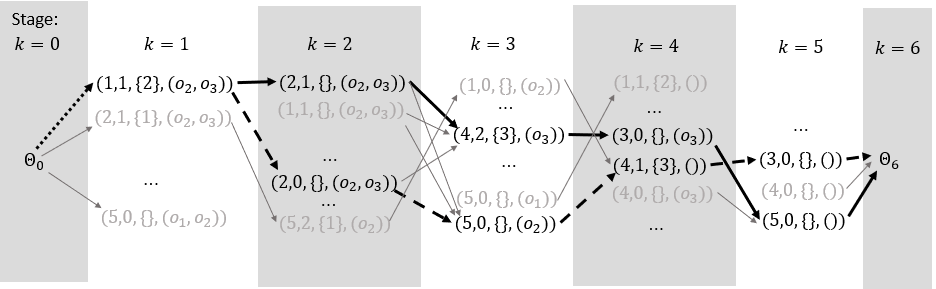}
    \caption{Illustration of the state graph}
    \scriptsize{Note: Extract of the state graph corresponding to the instance 
    from Figure~\ref{fig:warehouse}. The arcs in bold mark the path through the state graph that corresponds to the optimal solution for the makespan objective $\sigma^*_1$; the dashed arcs mark the path of the optimal solution $\sigma^*_2$ for the turnover objective; and the dotted arc belongs to both optimal solutions (see Table~\ref{tab:Solutions_of_ex}). The states involved on at least one of these paths have a black font; some other states are included in light grey, as well as all other feasible transitions between two depicted states, which do not belong to the optimal paths.}
    \label{fig:stategraph}
\end{figure}


Recall that the orders are sorted in non-decreasing order based on their release times. We refer to this sorted sequence as $O$, and for convenience, we abbreviate the phrase \textit{"$O'$ is an ordered subsequence of $O$"} as $O'|O$.

OBSRP-R can be viewed as a sequential optimization problem with stages indexed by  $k\in\{0,1,...,n^i,n^i+1\}$. At stage $k\leq n^i$, the picker has picked $k$ items and, given the current \textit{state} (such as her location, available bins in the cart etc.), faces the \textit{subproblem} to pick the remaining items of the remaining orders. Her immediate \textit{decision} at stage $k$ is about the next item to collect -- and, possibly about the closure or extension of the current batch. Stage $n^i+1$ is reserved for the terminal state as explained below. We depict the resulting sequential problem as a \textit{state graph}. The nodes of this graph are called \textit{states}. The directed edges depict \textit{transitions} between the states of the subsequent stages and are associated with \textit{decisions}. 

\textit{States}. We define state $\Theta_k$ at stage $k\in[n^i]$ as a tuple of the following four variables: 
\begin{align}
\Theta_k\in\{(s,m^o,S^{\text{batch}},O^{\text{pend}})| s\in S, m^o\in\{0,1,\ldots,c\}, S^{\text{batch}}\subseteq S, O^{\text{pend}}|O\}
\end{align}
Variable  $s$ denotes the 
last picked item. Integer $m^o$ counts the number of orders currently assigned to the open batch. The \textit{set} $S^{\text{batch}}\subseteq S$ accommodates the items of the orders from the current batch, which have not been picked yet. Sequence $O^{\text{pend}}|O$ is the \textit{sequence} of pending orders, no items of which have been picked so far.  

In the \textit{initial} state $\Theta_0=(l_d,0,\{\},O)$ at stage $k=0$, the picker is at the depot with an empty cart. The last stage $k=n^i+1$ consists of one state, dubbed \textit{terminal state}. 
It describes the picker's return to the depot with all the orders processed:  $S_{n^i+1}=(l_d,0,\{\},())$. 

States of type $\Theta_k=(s,0,\{\},O^{\text{pend}})$ with $S^{\text{batch}}=\{\}$ and $m^o=0$ 
are called \textit{batch-completion states}. In batch completion states, the picker has completed the batch by picking item $s$ and moved to the depot for unloading, thus in these states, the picker's current position is $l_d$. In all other states, $\Theta_k=(s,m^o,S^{\text{batch}},O^{\text{pend}}), m^o> 0$ , the picker's current position coincides with the picking location of the last picked item $s\in S$.

\textit{Transitions}. We denote a set of feasible decisions, or \textit{feasible transitions}, from state $\Theta_k$ at stage $k\leq n^i$ as $X(\Theta_k)$. The \textit{transition function}  $f(\Theta_k,x_k)=\Theta_{k+1}$ describes the next state after the execution of the decision $x_k\in X(\Theta_k)$ at state $\Theta_k$.

At stage $k=n^i$, the only feasible transitions move to the terminal state from each batch-closure state $\Theta_{n^i}$, other states have no feasible transition. 
At the remaining stages $k=\{0,1,\ldots,n^i-1\}$, transitions $x_k\in X(\Theta_k)$  refer to the selection of the next picking item $s_{k+1}$, and, potentially to the decision of extending- or completing the currently open batch. 
Table~\ref{tab:pcart_transitions} describes feasible transitions for OBSRP-R at stages $k\in \{0,...,n^i-1\}$. 
Lines 1 and 2 describe transitions from a batch-completion state which initiate a new batch. Note that, if the next batch starts by picking an item from a \textit{single-item} order,  the picker has two alternatives: Either to pick this item as part of a larger batch (line 1) or to limit the batch to this one order only (line 2). In lines 3 - 7 and 10,  the currently open batch is extended by a pending order. This is only possible when the batching capacity is not exhausted ($m^o<c$). Thereby, in lines 3 to 6, there are still empty bins in the current batch, but all the items of the already assigned orders have been collected. If an item $s_{k+1}$ from a \textit{single-item} order is selected to be picked next, two alternatives must be distinguished: either the batch is completed by this order (i.e., the next state $\Theta_{k+1}$ is a batch-completion state, see lines 4 and 6), or further orders will be assigned to the current batch (lines 3 and 5). In lines 8-9 and 11-13, at least one item from a commenced order remains in $S^{\text{batch}}$ and is selected to be picked next. Similarly to previous transitions, if only one item remains in $S^{\text{batch}}$ and the batching capacity has not been depleted ($m^o\leq c-1$) (lines 11 and 12), the picker can either close the batch after picking this remaining item and move to a batch-completion state (see line 12), or proceed by extending further the current batch (line 11). 

\begin{table}
\centering
\captionof{table}{Feasible transitions at stages $k=\{0,1,\ldots,n^i-1\}$ }\label{tab:pcart_transitions}\par
\scriptsize{
\begin{tabular}{llll}
\toprule
&Current state & Transition & New state  \\
&$\Theta_k$ & $x_k\in X(\Theta_k)$ & $\Theta_{k+1}=f(\Theta_k,x_k)$    \\
\midrule
\textcolor{gray}{1}&$(s_k, 0, \{\}, O^{\text{pend}})$ & $\forall s_{k+1}\in o_j, o_j\in O^{\text{pend}}$ & $(s_{k+1},1, o_j\setminus\{s_{k+1}\},O^{\text{pend}}\setminus o_j) $ \\
\textcolor{gray}{2}& -\slash\slash- & $\forall s_{k+1}\in o_j, o_j\in O^{\text{pend}}, |o_j|=1$ & $(s_{k+1},0, \{\},O^{\text{pend}}\setminus o_j) $ \\
\textcolor{gray}{3}& $(s_k, m^o, \{\}, O^{\text{pend}}), 0<m^o<c-1$& $\forall s_{k+1}\in o_j, o_j\in O^{\text{pend}}$& $(s_{k+1},m^o+1, o_j\setminus s_{k+1},O^{\text{pend}}\setminus o_j) $ \\
\textcolor{gray}{4}&  -\slash\slash-& $\forall s_{k+1}\in o_j, o_j\in O^{\text{pend}}, |o_j|=1$& $(s_{k+1},0, \{\},O^{\text{pend}}\setminus o_j) $ \\
\textcolor{gray}{5}& $(s_k, m^o, \{\}, O^{\text{pend}}), m^o=c-1$& $\forall s_{k+1}\in o_j, o_j\in O^{\text{pend}}, |o_j|>1$& $(s_{k+1},m^o+1,  o_j\setminus s_{k+1},O^{\text{pend}}\setminus o_j) $ \\
\textcolor{gray}{6}&  -\slash\slash-& $\forall s_{k+1}\in o_j, o_j\in O^{\text{pend}}, |o_j|=1$& $(s_{k+1},0, \{\},O^{\text{pend}}\setminus o_j) $ \\
\textcolor{gray}{7}& $(s_k, m^o, S^{\text{batch}}, O^{\text{pend}}), 0<m^o\leq c-1, |S^{\text{batch}}|>1$  &  $\forall s_{k+1}\in o_j, o_j\in O^{\text{pend}}$ & $(s_{k+1},m^o+1, S^{\text{batch}}\cup o_j\setminus s_{k+1},O^{\text{pend}}\setminus o_j) $\\
\textcolor{gray}{8}& -\slash\slash-  &  $\forall s_{k+1}\in S^{\text{batch}}$ &  $(s_{k+1},m^o, S^{\text{batch}}\setminus\{s_{k+1}\},O^{\text{pend}}) $\\

\textcolor{gray}{9}& $(s_k, m^o, S^{\text{batch}}, O^{\text{pend}}), m^o=c, |S^{\text{batch}}|>1$& $\forall s_{k+1}\in  S^{\text{batch}}$ & $(s_{k+1},m^o, S^{\text{batch}}\setminus\{s_{k+1}\},O^{\text{pend}}) $\\

\textcolor{gray}{10}& $(s_k, m^o, S^{\text{batch}}, O^{\text{pend}}), 0<m^o\leq c-1, |S^{\text{batch}}|=1$& $\forall s_{k+1}\in o_j, o_j\in O^{\text{pend}}$ & $(s_{k+1},m^o+1, S^{\text{batch}}\cup o_j\setminus s_{k+1},O^{\text{pend}}\setminus o_j) $\\
\textcolor{gray}{11}& -\slash\slash- &$\forall s_{k+1}\in S^{\text{batch}}$ & $(s_{k+1},m^o, \{\},O^{\text{pend}}) $\\
\textcolor{gray}{12}& -\slash\slash-& $\forall s_{k+1}\in S^{\text{batch}}$ & $(s_{k+1},0, \{\},O^{\text{pend}}) $\\
\textcolor{gray}{13}& $(s_k, m^o, S^{\text{batch}}, O^{\text{pend}}), m^o=c, |S^{\text{batch}}|=1$& $\forall s_{k+1}\in S^{\text{batch}}$ & $(s_{k+1},0, \{\},O^{\text{pend}}) $\\
\midrule[\heavyrulewidth]
  \end{tabular}
}
\end{table}

Appendix~\ref{sec:dp_correct} proves the correctness of the formulated state graph. 
Figure~\ref{fig:stategraph} shows a snippet of the graph for the instance in Figure~\ref{fig:warehouse}, highlighting the paths of the optimal solutions from Table~\ref{tab:Solutions_of_ex}.

\subsubsection{Bellman equations for the makespan objective}\label{sec:cost_labeling}

The immediate transition cost $g^{\text{cost}}(\Theta_{k},x_{k})$ from a state $\Theta_{k}$ by taking decision $x_k$ which results in a state $\Theta_{k+1}$ (with last-picked item $s_{k+1}$) represents the \textit{time} required to make this transition. It is the maximum of the picker's walking time from either item $s_k$ of the previous state -- or the depot $l_d$ -- depending on the nature of the previous state, to the next state's item $s_{k+1}$; and the waiting time for the release of $s_{k+1}$, plus the picking time $t^p$. When $\Theta_{k+1}$ is a batch-completion state, where the picker completes a batch by returning her cart to the depot, the walking time from $s_{k+1}$ to $l_d$ is added. 
See Appendix~\ref{sec:details_bellman} for precise formulas.

The minimal makespan, which is the time of the shortest path in the defined state graph from the initial state to the terminal state, is computed recursively in a forward manner  with the following Bellman equations:
\begin{align}
\Omega^{*,\text{cost}}(\Theta_{k+1})=
    \min_{(\Theta_{k}, x_{k})\in f^{-1}(\Theta_{k+1})} \{ \Omega^{*,\text{cost}}(\Theta_{k}) +  g^{\text{cost}}(\Theta_{k},x_{k})  \} \label{eq:Bellman_makesp}
\end{align}
where $\Omega^{*,\text{cost}}(\Theta_{k})$ is the value of state $\Theta_{k}$ and the inverse image $f^{-1}(\Theta_{k+1})$  represents the set of all feasible transitions (see Table~\ref{tab:pcart_transitions}) to reach state $\Theta_{k+1}$:  $f^{-1}(\Theta_{k+1})=\{ (\Theta_{k},x_{k}) \ \vert \ \Theta_{k} \text{ is a state at stage } {k}, x_{k}\in X(\Theta_{k}), \text{ and } f(\Theta_{k},x_{k})=\Theta_{k+1} \}$. The value of the initial state $\Omega^{*,\text{cost}}(\Theta_0):=0$. 
Finally, the minimal makespan for the given OBSRP-R instance equals the value of the terminal state: $z^{*,\text{makespan}}=\Omega^{*,\text{cost}} (\Theta_{n^i+1})$.
\subsubsection{Bellman equations and heuristic labeling for the turnover objective} \label{sec:turnover_heuristic}
For the average turnover objective, \textit{w.l.o.g} we compute the sum of order completion times in the state graph (see the discussion in Section~\ref{sec:obj_fun}).  

An exact algorithm for the turnover objective requires computing \textit{labels} in each state $\Theta_{k}$ with two values, on one hand the clock time $\Omega^{\text{cost}}(\Theta_{k})$, computed similarly to (\ref{eq:Bellman_makesp}), and on the other hand the associated completion time value, which represents the sum of the completion times for all completed orders. In certain cases, several labels per state may be required to find an optimal solution \citep[see][on labeling algorithms]{Irnich2005}. However,
maintaining all Pareto-optimal labels for each state would drastically increase the size of the state graph, slowing down the approach significantly. To address this, we introduce a heuristic label — the \textit{forward-looking completion value} — for faster and more efficient processing. The resulting DP returns \textit{optimal solutions} when no waiting time is involved. However, in certain cases where waiting for incoming orders is necessary, the DP \textit{may yield suboptimal solutions}, as it slightly overestimates the benefits of batching (see example in Section~\ref{sec:example_labeling} below). 


The \textit{forward-looking completion value}  of a state 
represents the sum of completion times for all \textit{already completed} orders, plus the clock time of the current state summed up for all \textit{not yet completed} orders. Intuitively, a low forward-looking completion value reflects both the short-term benefits of a low average turnover for completed orders and the long-term benefits of a low current clock time.  

We introduce the \textit{immediate forward-looking turnover costs}  $ g^{\text{comp+}}(\Theta_k,x_k)$ of a transition from a state $\Theta_k$ by taking decision $x_k$. It sums up the duration of the transition (i.e. $g^{\text{cost}}(\Theta_k,x_k)$) for every uncompleted order at stage $\Theta_k=(s_k,m^o,S^{\text{batch}},O^{\text{pend}})$:
\begin{align}
g^{\text{comp+}}(\Theta_k,x_k)= g^{\text{cost}}(\Theta_k,x_k)\cdot(m^o+ \vert O^{pend} \vert) \qquad \forall k \in \{0,...,n^i\} \label{eq:immediate_turnover}
\end{align}

Only one label is stored in each state $\Theta_{k+1}$ with the best forward-looking completion value which is computed recursively in a forward manner with the following Bellman equations:
\begin{align}
\Omega^{*,\text{comp+}}(\Theta_{k+1})=
    \min_{(\Theta_k, x_k)\in f^{-1}(\Theta_{k+1})} \{ \Omega^{*,\text{comp+}}(\Theta_k) +  g^{\text{comp+}}(\Theta_k,x_k)  \}, \label{eq:Bellman_compl}
\end{align}
The value of the initial state $\Omega^{*,\text{comp+}}(\Theta_0):=0$. 
Finally, the algorithm outputs $\Omega^{*,\text{comp+}}(\Theta_{n^i+1})$ of the terminal state and computes the average turnover time of the resulting solution straightforwardly with Formula~(\ref{eq:obj_turnover}).

\subsubsection{Illustrative example for the turnover objective} \label{sec:example_labeling}
Consider OBSRP-R instance $I_1$ with  single-item orders $o_1=\{s_1\}$, $o_2=\{s_2\}$ and $o_3=\{s_3\}$, all with release times $r_1=r_2=r_3=0$. The picker moves at unit speed $v=1$, with a batching capacity of $c=2$ and negligible picking time $t^p=0$. The distances are as follows: $d(l_d,s_1)=3, d(l_d,s_2)=5, d(s_1,s_2)=4, d(s_2,s_3)=7$,  and $d(l_d,s_3)=2$. For simplicity,  assume that order $o_1$ has to be picked first, followed by $o_2$, then $o_3$. 
\begin{figure}[h]
    \centering
    \includegraphics[width=\linewidth]{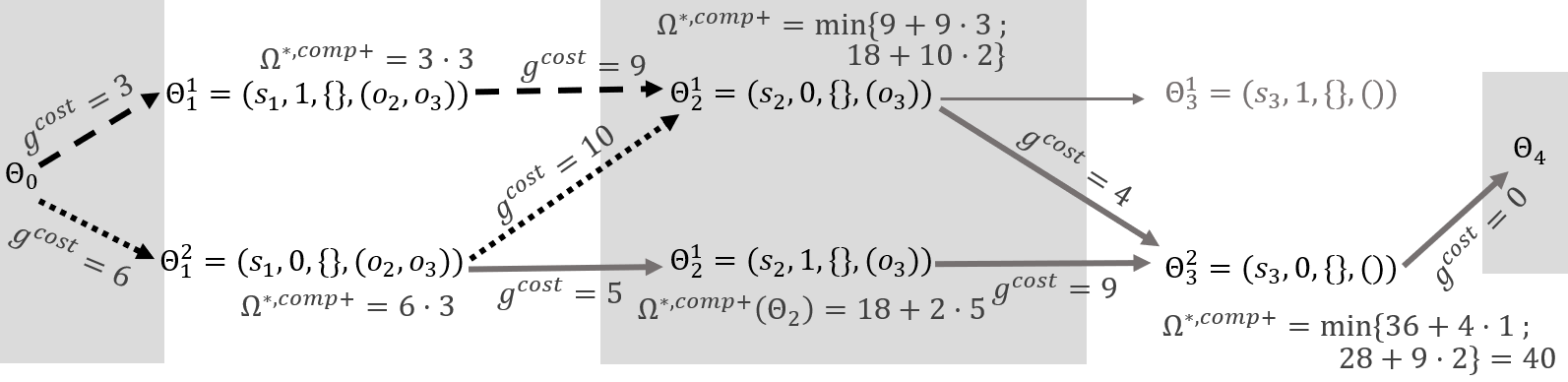}
    \caption{Forward-looking completion values for instance $I_1$}
    \label{fig:example_label}
\end{figure}

Figure~\ref{fig:example_label} illustrates the state-graph and the associated forward-looking completion values for $I_1$. For instance, the first stage contains two states, order $o_1$ can be picked in a batch with multiple orders resulting in $\Theta_1^1$, or as a single-order batch, resulting in batch-completion state $\Theta_2^1$. The transition to the latter has a $g^{\text{cost}}$ of $2\cdot 3=6$ representing the time of the return trip to $s_1$ from the depot; the respective immediate forward-looking turnover costs $g^{\text{comp+}}$ equal $6\cdot 3=18$. We multiply 6 with 3, because it is the completion time of the first order and, as the current clock time, it is part of the completion time of the orders $o_2$ and $o_3$, see Formula (\ref{eq:immediate_turnover}). The resulting forward-looking completion value is $\Omega^{*,\text{comp+}}(\Theta_1^2)=0+18=18$. Similarly, we receive the value $\Omega^{*,\text{comp+}}(\Theta_1^1)=0+3\cdot 3=9$. The state $\Theta_2^1$ of the second stage can be reached from both $\Theta_1^1$ and $\Theta_1^2$ with immediate forward-looking turnover costs $g^{\text{comp+}}$ of $(4+5)\cdot(1+2)=27$ and $5\cdot 2\cdot (0+2)=20$, respectively, and the best forward-looking value of $\Theta_2^1$ consequently equals $\Omega^{*,\text{comp+}}(\Theta_2^1)=\min\{9+27;18+20\}=36$. In other words, the DP recommends picking the two first orders in one batch $\{o_1, o_2\}$ (dashed arcs); the partial solution $\{o_1\}, \{o_2\}$ (dotted arcs) is discarded for further analysis. With a similar logic, as illustrated in Figure~\ref{fig:example_label}, the algorithm proceeds and outputs the solution $\{o_1, o_2\},\{o_3\}$, which is indeed an optimal solution for $I_1$ with the average turnover of $\frac{40}{3}$. 

Now, consider an instance $I_2$ 
with the same parameters as for $I_1$, but release time $r_3=17$. The forward-looking value $\Omega^{*,\text{comp+}}(\Theta_2^1)$ remains 36 and the partial solution  $\{o_1\}, \{o_2\}$ (dotted arcs) is discarded for further analysis. This, however, is an erroneous decision given the \textit{waiting time} before picking the last order $o_3$ in $\Theta_3^2$. The latter, with the algorithm, is reached through $\Theta_2^1$ with a $g^{\text{cost}}$ of 2+3+2, where the first and last summands represent the walking time and the second the waiting time until $r_3$; resulting in an immediate forward-looking turnover cost  $g^{\text{comp+}}$ of $7\cdot 1$ and a value $\Omega^{*,\text{comp+}}(\Theta_3^2)=\min\{36+7;28+9\cdot 2\}=43$. Starting with the partial solution $\{o_1\}, \{o_2\}$ would lead to no waiting time for picking $s_3$, thus immediate costs $g^{\text{cost}}$ and $g^{\text{comp+}}$ of $4$ and $4\cdot 1$ respectively, and the forward-looking completion value of $\Omega^{*,\text{comp+}}(\Theta_3^2)=38+4=42$. This would have provide a slightly shorter average turnover, $\frac{43}{3}-\frac{17}{3}$ compared to $\frac{42}{3}-\frac{17}{3}$ achieved by the algorithm, demonstrating that in some cases, the latter may output slightly sub-optimal solutions.
\subsection{Dominance rules for transitions in the DP formulation } \label{sec:dominancerules}

Certain transitions in the state graph of the DP can be omitted, thereby reducing its complexity. 

For a given instance, a feasible solution $\sigma$ is called \textit{weakly dominated}, if there exists another feasible solution $\sigma'$ with identical or better objective value, i.e. if $z^{\text{makespan}}(\sigma')\leq z^{\text{makespan}}(\sigma)$ or $z^{\text{turnover}}(\sigma')\leq z^{\text{turnover}}(\sigma)$, in case of the makespan- or turnover objective, respectively. 
Similarly, we call a \textit{transition} in the state graph\textit{ weakly dominated}, if \textit{every} feasible solution involving this transition is weakly dominated by another feasible solution not involving this transition. 
By definition, weakly dominated transitions may be omitted in the DP. 

Propositions ~\ref{prop:dom_in_batch_1},~\ref{prop:dom_in_batch_2} and ~\ref{prop:dom_batch-closure}, identify weakly dominated transitions, based on the release time of the next selected picking item. Intuitively, they prohibit selecting items whose release times are too far in the future. The propositions hold for the makespan objective, \textit{and} for the turnover objective. 


\begin{proposition}~\label{prop:dom_in_batch_1}
Consider a current state $\Theta_k=(s_k,m^o,S^{\text{batch}},O^{\text{pend}})$ with $\vert S^{\text{batch}} \vert \geq 1$. Then any transition $x_k\in X(\Theta_k)$ that visits next picking location $s_{k+1}\in o_j, o_j\in O^{\text{pend}}$ is weakly dominated, if: 
\begin{align}
r_j \geq  \Omega^{*,\text{cost}}(\Theta_k) + \frac{2}{v} \cdot (L+W) +t^{p} \qquad \qquad \qquad (D1) \nonumber 
\end{align}
\end{proposition}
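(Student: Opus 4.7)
The plan is to prove weak dominance by an exchange argument. Given any feasible solution $\sigma$ whose path through the state graph passes through $\Theta_k$ and uses the transition $x_k$ to pick $s_{k+1}\in o_j$, I will construct an alternative feasible $\sigma'$ that takes a different transition out of $\Theta_k$ and satisfies both $z^{\text{makespan}}(\sigma')\leq z^{\text{makespan}}(\sigma)$ and $z^{\text{turnover}}(\sigma')\leq z^{\text{turnover}}(\sigma)$.

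Construction. Since $|S^{\text{batch}}|\geq 1$, I pick an arbitrary $s^*\in S^{\text{batch}}$; in $\sigma$ this item is collected at some stage $j^*>k+1$ within the current batch $B$ (which in $\sigma$ contains $o_j$). Define $\sigma'$ by swapping the positions of $s_{k+1}$ and $s^*$ in $\sigma$'s picking sequence, while leaving the batching partition unchanged. Feasibility is immediate: both items belong to $B$, so permuting them within the same batch preserves the batching structure, and the transition out of $\Theta_k$ in $\sigma'$ corresponds to one of lines 8, 9, 11, 12, or 13 of Table~\ref{tab:pcart_transitions}.

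Both objectives depend only on batch-completion times --- for turnover, by~(\ref{eq:order_comp}) every order in a batch has completion time equal to the batch's. Since the batches processed after $B$ consist of the same orders in $\sigma$ and $\sigma'$, it suffices to show that $B$'s completion time in $\sigma'$ is no later than in $\sigma$. The key time estimate uses (D1). Let $t_\sigma\geq\Omega^{*,\text{cost}}(\Theta_k)$ denote $\sigma$'s clock time at $\Theta_k$, and note that $d(s,l_d)\leq L+W$ for any location $s$ and $d(s,s')\leq 2(L+W)$ for any pair of locations by the triangle inequality through $l_d$. Since $t_\sigma + d(s_k,s_{k+1})/v \leq \Omega^{*,\text{cost}}(\Theta_k)+2(L+W)/v \leq r_j - t^p$ under (D1), the picker in $\sigma$ waits idle at $s_{k+1}$ for an interval of length at least $t^p$ before the pick starts. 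In $\sigma'$ that waiting window is used instead to travel to $s^*$, pick it (no waiting, as $r(s^*)\leq t_\sigma$), and then travel to $s_{k+1}$; a direct triangle-inequality calculation using (D1) shows that the clock time at which $s_{k+1}$ is picked in $\sigma'$ never exceeds $r_j+t^p$, so the batch completes no later.

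The main obstacle will be extending this single-swap estimate to the general case in which several items --- possibly belonging to orders that $\sigma$ inserts into $B$ after stage $k+1$ via lines 3--7 of Table~\ref{tab:pcart_transitions} --- lie between positions $k+1$ and $j^*$; the swap then shifts the clock times of all intermediate picks. I expect this to follow cleanly from the observation that (D1) provides a waiting slack of $2(L+W)/v+t^p$, which is large enough to absorb any extra walking distance induced by the permutation within $B$, so that the batch-completion time in $\sigma'$ remains bounded by that in $\sigma$.
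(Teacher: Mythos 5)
Your exchange argument is the right general idea, but the specific exchange you chose — a \emph{swap} of $s_{k+1}$ and $s^*$ — does not work, and the obstacle you flag at the end is fatal rather than technical. The paper instead uses an \emph{insertion}: it pulls an item $s^{\dagger}\in S^{\text{batch}}$ forward so that it is picked immediately \emph{before} $s_{k+1}$, leaving the rest of the sequence untouched except that $s^{\dagger}$ is skipped at its original position. Because $r_j \geq \Omega^{*,\text{cost}}(\Theta_k)+\tfrac{1}{v}\bigl(d(s_k,s^{\dagger})+d(s^{\dagger},s_{k+1})\bigr)+t^p$ (this is exactly why (D1) carries the factor $\tfrac{2}{v}(L+W)$ — it bounds the \emph{detour} $s_k\to s^{\dagger}\to s_{k+1}$, not a single leg), the picker still reaches $s_{k+1}$ before $r_j$, so $C(s_{k+1})$ does not increase; and every later completion time weakly decreases because skipping $s^{\dagger}$'s original position only shortens the remaining walk, by the triangle inequality. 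Your reading of the $2(L+W)$ term as a depot-detour bound on $d(s_k,s_{k+1})$ misses this structure.

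The swap fails because it changes the \emph{suffix} of the route: at position $j^*$ the picker must now travel $\pi[j^*-1]\to s_{k+1}\to\pi[j^*+1]$ instead of $\pi[j^*-1]\to s^*\to\pi[j^*+1]$, and this can be strictly longer by up to roughly $2(L+W)$. Your hope that the waiting slack earned at position $k+1$ absorbs this is false: the slack need not propagate. Concretely, if some intermediate item $\pi[k+2]$ belongs to an order added to the batch by intervention with release time $r(\pi[k+2])$ equal to the clock at which $\sigma$ would reach it, then in $\sigma'$ the picker simply waits at $\pi[k+2]$ until that same release time, the entire head start is wiped out, positions $k+3,\dots,j^*-1$ run on identical clocks in both solutions, and the extra detour at position $j^*$ is then unpaid — the batch (and hence the makespan and turnover) gets strictly worse. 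There is also a smaller slip: from $t_\sigma\geq\Omega^{*,\text{cost}}(\Theta_k)$ you cannot conclude $t_\sigma+d(s_k,s_{k+1})/v\leq\Omega^{*,\text{cost}}(\Theta_k)+2(L+W)/v$; the argument must be anchored to the sub-path attaining the value $\Omega^{*,\text{cost}}(\Theta_k)$, as the paper does.
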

\begin{proof}
See Appendix~\ref{sec:proof_dom_in_batch_1}.   
 \end{proof}
Intuitively, Proposition~\ref{prop:dom_in_batch_1} prohibits selecting item $s_{k+1}$ with a late release time, if there are items of a commenced order in the current batch ($\vert S^{\text{batch}} \vert \geq 1$) that can be picked first. Note that $L+W$ is an upper bound for the distance between any two locations.

\begin{proposition}~\label{prop:dom_in_batch_2}
Consider a current state $\Theta_k=(s_k,m^o,\{\},O^{\text{pend}})$ with $m^o>0$. Then any transition $x_k\in X(\Theta_k)$ that visits next picking location $s_{k+1}\in o_j, o_j\in O^{\text{pend}}$ is weakly dominated, if 
\begin{align}
r_j \geq  \Omega^{*,\text{cost}}(\Theta_k) + \frac{2}{v} (L+W)  \qquad \qquad \qquad (D2)
\nonumber
\end{align}
\end{proposition}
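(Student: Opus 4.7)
The plan is an exchange argument paralleling the proof of Proposition~\ref{prop:dom_in_batch_1}, adapted to the regime $S^{\text{batch}}=\emptyset$, $m^o\ge 1$. Given an arbitrary feasible solution $\sigma$ that employs a transition $x_k$ forbidden by (D2), I construct a companion solution $\sigma'$ whose makespan and turnover are each no larger than those of $\sigma$.

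First I identify the batch $B$ of $\sigma$ that is open at $\Theta_k$: by the shape of the state, $B$ consists of the $m^o$ orders already fully collected together with all orders subsequently appended, beginning with $o_j$. The construction of $\sigma'$ is to \emph{split} $B$ into a first batch $B_A$ containing only the $m^o$ completed orders and a second batch $B_B$ containing $o_j$ and every order that was appended to $B$ after it in $\sigma$. Batch $B_A$ terminates at $s_k$ and sends the picker back to the depot; batch $B_B$ starts at $s_{k+1}$ and reuses the remaining internal picking sequence of $B$ verbatim. Since $|B|\le c$, both $B_A$ and $B_B$ obey the capacity constraint, so $\sigma'$ is feasible. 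Every other batch of $\sigma$ is preserved without change.

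The crux is a clock-time comparison. The two solutions coincide up to $\Theta_k$ and share the value $\Omega^{*,\text{cost}}(\Theta_k)$. In $\sigma'$, the inserted detour $s_k\to l_d\to s_{k+1}$ takes at most $\tfrac{2}{v}(L+W)$, which by (D2) does not exceed $r_j-\Omega^{*,\text{cost}}(\Theta_k)$, so the picker waits for the release of $o_j$ and picks $s_{k+1}$ at exactly $r_j+t^p$. In $\sigma$, the direct walk $s_k\to s_{k+1}$ is even shorter, also arrives before $r_j$, and likewise picks $s_{k+1}$ at $r_j+t^p$. From this instant onward, the two picking schedules can be made identical, so every downstream item and the final return to the depot occur at the same clock times in $\sigma$ and $\sigma'$.

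The conclusion is then immediate: the terminal-return times coincide, giving $z^{\text{makespan}}(\sigma')=z^{\text{makespan}}(\sigma)$; the $m^o$ orders of $B_A$ are completed strictly earlier in $\sigma'$ at time $\Omega^{*,\text{cost}}(\Theta_k)+\tfrac{1}{v}d(s_k,l_d)$, while all other order completion times are unchanged, yielding $z^{\text{turnover}}(\sigma')\le z^{\text{turnover}}(\sigma)$. The step I expect to require the most care is rigorously ruling out any cascading delay downstream of $\Theta_k$; this reduces to the observation that the inserted depot detour is fully absorbed by the mandatory wait for $r_j$, which is precisely what (D2) guarantees. A secondary subtlety worth spelling out is why the $+t^p$ summand present in (D1) disappears in (D2): with $S^{\text{batch}}=\emptyset$ there is no already-committed item whose pick time must be scheduled before reaching $s_{k+1}$ in the alternative solution, so the detour bound tightens from $\tfrac{2(L+W)}{v}+t^p$ to $\tfrac{2(L+W)}{v}$.
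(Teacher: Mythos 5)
Your proposal is correct and matches the paper's argument: the paper likewise splits the open batch at $\Theta_k$ into the $m^o$ already-collected orders (returned to the depot as their own batch) and the remainder starting with $o_j$, and uses (D2) to show the depot detour of at most $\tfrac{2}{v}(L+W)$ is absorbed by the wait for $r_j$, so $s_{k+1}$ is still completed at $r_j+t^p$ and all downstream completion times are preserved while the split-off orders finish no later. Your remark on why the $+t^p$ term of (D1) drops out is also consistent with the paper's reasoning.
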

\begin{proof}
See Appendix~\ref{sec:proof_dom_in_batch_2}.   
 \end{proof}
Intuitively, Proposition~\ref{prop:dom_in_batch_2} requires completing the current batch before moving to item $s_{k+1}$ with a late enough release time, if all the items in the commenced orders of the current batch have been picked.


Finally, Proposition~\ref{prop:dom_batch-closure} describes the dominance of feasible transitions from batch-completion states. It prevents starting a new batch with a pending order $o_j\in O^{\text{pend}}$ that has a late release time, if we can complete another pending order in a single-order batch first.
\begin{proposition}\label{prop:dom_batch-closure}
Consider a current state $\Theta_k=(s_k,0,\{\},O^{\text{pend}})$. Let $o_{j_{min}} \in O^{\text{pend}} $ be the uncompleted order with the earliest release time in $O^{\text{pend}}$. Then, any transition $x_k\in X(\Theta_k)$ that visits next picking location $s_{k+1}\in o_{j}\neq o_{j_{min}}, o_{j}\in O^{\text{pend}}$ is dominated, if: 
\begin{align}
r_{j}\geq  \max\{r_{j_{min}};\Omega^{*,\text{cost}}(\Theta_k)\}+\frac{1}{v}\cdot (UB+L+W) + \vert o_{j_{min}} \vert \cdot t^p , \qquad (D3) \nonumber  
\end{align}
with $UB:=(2L+(a(o_{j_{min}})+1)W)$, where $a(o_{j_{min}})$ is the number of aisles containing picking items of $o_{j_{min}}$.
\end{proposition}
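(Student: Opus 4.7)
The plan is to run an exchange argument analogous to Propositions~\ref{prop:dom_in_batch_1} and~\ref{prop:dom_in_batch_2}. Starting from an arbitrary feasible solution $\sigma$ that uses the forbidden transition at $\Theta_k$, I would construct a feasible solution $\sigma'$ that differs from $\sigma$ only in: (a) inserting a standalone single-order batch containing just $o_{j_{min}}$ right after $\Theta_k$, picked along some depot-to-depot route of length at most $UB$; and (b) removing the items of $o_{j_{min}}$ from whichever later batch in $\sigma$ originally contained them. The remainder of the picking sequence and all assignments of other orders to batches stay untouched. The goal is then to show that $\sigma'$ is feasible and weakly dominates $\sigma$ under both the makespan and the turnover objective.

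The crucial step is a timing estimate. Using $UB/v$ as a bound on the travel time of the detour, $(L+W)/v$ as a bound on the depot-to-$s_{k+1}$ travel, and accounting for a possible wait for $r_{j_{min}}$ before the detour can start, the picker in $\sigma'$ reaches $s_{k+1}$ no later than $\max\{\Omega^{*,\text{cost}}(\Theta_k),r_{j_{min}}\}+(UB+L+W)/v+|o_{j_{min}}|\cdot t^p$, which by hypothesis (D3) is at most $r_j$. Moreover, in $\sigma$ itself the picker already has to wait until $r_j$ before picking $s_{k+1}$ -- this follows from (D3) together with $d(l_d,s_{k+1})\le L+W$ -- so the clock time of the pick of $s_{k+1}$ is exactly $r_j+t^p$ in both $\sigma$ and $\sigma'$. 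From $s_{k+1}$ onward, $\sigma'$ can therefore replay $\sigma$'s schedule verbatim for every item not belonging to $o_{j_{min}}$; in the batch of $\sigma$ that previously contained $o_{j_{min}}$, the triangular inequality guarantees that skipping $o_{j_{min}}$'s items does not lengthen the walking distance, and $|o_{j_{min}}|\cdot t^p$ picking time is additionally freed. Feasibility (release times, capacity, routing) is straightforward to check since no pick is moved earlier than in $\sigma$ for items outside $o_{j_{min}}$, and $o_{j_{min}}$'s items are picked only after $r_{j_{min}}$ in the detour.

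The comparison of objective values is then immediate: in $\sigma'$ every item outside $o_{j_{min}}$ has a completion time no larger than in $\sigma$, while $o_{j_{min}}$ itself is completed strictly earlier, during the detour. This yields $z^{\text{makespan}}(\sigma')\le z^{\text{makespan}}(\sigma)$ and, by summing order completion times, $z^{\text{turnover}}(\sigma')\le z^{\text{turnover}}(\sigma)$, establishing weak dominance for both objectives. The main obstacle, in my view, is not the exchange argument itself but the auxiliary geometric claim that a depot-to-depot tour visiting all items of $o_{j_{min}}$ indeed fits within distance $UB=2L+(a(o_{j_{min}})+1)W$ in the rectangular warehouse layout; this needs an explicit routing construction (essentially an S-shape traversal of the $a(o_{j_{min}})$ relevant aisles together with horizontal moves along one or two cross-aisles, plus one extra aisle-length to reconnect to $l_d$), which I would isolate as a short geometric lemma before invoking it in the main exchange step.
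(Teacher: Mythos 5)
Your proposal is correct and follows essentially the same route as the paper: the paper's Lemma on neighboring batches performs exactly your exchange (insert $o_{j_{min}}$ as a standalone single-order batch right after the current batch closure, strip its items from the later batch, and replay the rest), uses the same timing estimate to show the pick of $s_{k+1}$ is not delayed, and handles your "auxiliary geometric claim" by invoking the S-shape bound $\chi(o_{j_{min}})\le \frac{1}{v}\bigl(2L+(a(o_{j_{min}})+1)W\bigr)+\vert o_{j_{min}}\vert\cdot t^p$ from Lemma~1 of \citet{Lorenza}. The only structural difference is that the paper splits the argument into a solution-level dominance lemma plus a lemma mapping state-graph paths to that dominated class, which is organizational rather than substantive.
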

\begin{proof}
See Appendix~\ref{sec:proof_dom_batch_closure}.
 \end{proof}
Note that $UB(o_{j_{min}})$ presents an upper bound for the walking distance when picking $o_{j_{min}}$ in a single-order batch, see Lemma 1 of \citet{Lorenza}.

The right-hand sides of dominance rules ($D1$), ($D2$), and ($D3$) represent bounds (\textit{thresholds}) that are independent of the particular picking locations, which allows to efficiently integrate them in the DP. Specifically, at each state $\Theta_k, k \in \{0,...,n^i\}$, transitions $x_k\in X(\Theta_k)$ are considered in the non-decreasing order of the release times of associated next picking item $s_{k+1}$. Then, no more transitions from $\Theta_k$ 
must be considered, once the release time $r(s_{k+1})$ of the next item of a transition exceeds the associated threshold.  Algorithm~\ref{alg:in_batch_dom} provides details on the implementation. 

\begin{algorithm}
\scriptsize{
\SetAlgoLined
\LinesNumbered
Let $o_{j_{min}}\in O^{\text{pend}}$ be the first order with the minimum release time in $ O^{\text{pend}}$.\\
\For{all $s_{k+1}\in S^{\text{batch}}$ in arbitrary order}{perform applicable transition(s) of Table~\ref{tab:pcart_transitions} }
\For{ all $o_j\in O^{\text{pend}}$ in the given order of $O^{\text{pend}}$}{
  \uIf{ ($D1$)-($D3$) is True}{
  break and go to line 13\;}
  \Else{
    \For{all $s_{k+1}\in o_j$ in arbitrary order}{
    perform applicable transition(s) of Table~\ref{tab:pcart_transitions}
     }}
  }  
}
\caption{Construction of non-dominated transitions from state $\Theta_k=(s_k,m^o,S^{\text{batch}},O^{\text{pend}})$ } \label{alg:in_batch_dom}
\end{algorithm}

\section{Analytical properties of online policies for the makespan objective}\label{sec:analytics_makespan}
For the makespan objective in OOBSRWP, we can formulate several analytical properties. 

Lemma~\ref{lemma:waitingtime_placement} states that, \textit{ceteris paribus},  for a fixed instance and  optimal policy, the outcome will not worsen if we consolidate all waiting and idle times and place them at the beginning.  After this initial delay, the picker can proceed with forming batches and routing following the original policy, but avoiding further delays.

\begin{lemma}\label{lemma:waitingtime_placement}
Consider a fixed instance $I$ of OOBSRWP with the makespan objective, and any perfect-anticipation solution (CIOS) for this instance with a picking schedule $C^*(s), s\in S$. Let with $w^*(I)$ be the sum of all waiting and idle times in this schedule. Then there exists an alternative optimal solution, with identical makespan, picking sequence and batches, with an adjusted schedule $\tilde{C}^*(s), s\in S$, in which the picker starts the operation at time $w^*(I)$. 
\end{lemma}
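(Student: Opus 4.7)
The approach is to fix the picking sequence $\pi$ and the partition into batches $\pi^{\text{batches}}$ of the given CIOS, keep the intra-batch routing unchanged, and only rewrite the time schedule. I will show that concentrating all waiting and idle time at the very beginning yields a schedule that is both feasible and has the same makespan, which suffices for the lemma.

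First I would decompose the interval $[0,z^{*,\text{makespan}}]$ of the CIOS into three disjoint activity categories whose durations must sum to the makespan: total walking time $T^w$ (including the round trips to the depot between consecutive batches and the final return trip), total picking time $T^p=n^i\cdot t^p$, and total waiting/idle time $w^*(I)$. Hence $z^{*,\text{makespan}}=T^w+T^p+w^*(I)$. For the $k$-th item $s_k$ in $\pi$, let $W_k$ be the cumulative walking time up to (and including) the visit to $s_k$ along the CIOS route, and let $w^*_{s_k}(I)\le w^*(I)$ be the cumulative waiting/idle time elapsed before $s_k$ is placed in the cart in the CIOS. Then $C^*(s_k)=W_k+k\,t^p+w^*_{s_k}(I)$. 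I define the alternative schedule by
\begin{equation*}
\tilde C^*(s_k)\;:=\;w^*(I)+W_k+k\,t^p,
\end{equation*}
i.e., the picker idles at the depot during $[0,w^*(I)]$ and then performs the same route without any further waits.

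To verify feasibility, observe that $\tilde C^*(s_k)-C^*(s_k)=w^*(I)-w^*_{s_k}(I)\ge 0$; combined with feasibility of the CIOS, $C^*(s_k)-t^p\ge r(s_k)$, this gives $\tilde C^*(s_k)-t^p\ge r(s_k)$, so every release-time constraint is respected. For makespan preservation, note that all waiting and idle time in the CIOS must occur before the last picked item $s_{n^i}$ is placed in the cart (there is no reason — and no valid justification — to wait on the terminal walk back to $l_d$), so $w^*_{s_{n^i}}(I)=w^*(I)$ and hence $\tilde C^*(s_{n^i})=C^*(s_{n^i})$. Since the return trip from $s_{n^i}$ to $l_d$ has duration $d(s_{n^i},l_d)/v$ in both schedules, the two makespans coincide, establishing that $\tilde C^*$ is an alternative optimum with the advertised structure.

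The argument is essentially bookkeeping and I do not expect any hard step. The only point requiring care is the clean accounting of inter-batch depot round-trips as walking (rather than waiting) time, so that the three-way decomposition $z^{*,\text{makespan}}=T^w+T^p+w^*(I)$ truly partitions the CIOS timeline and the identity $w^*_{s_{n^i}}(I)=w^*(I)$ — on which makespan preservation hinges — is unambiguous.
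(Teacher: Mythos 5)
Your proof is correct, but it takes a different route from the paper's published argument. The paper's proof is a one-liner: it introduces the zero-release-time counterpart $I^{r=0}$ of the instance and reinterprets $w^*(I)$ as the difference $z^{*}(I)-z^{*}(I^{r=0})$ between the two optimal makespans, from which the claim is said to follow immediately. You instead give the explicit constructive argument: partition the CIOS timeline into walking, picking and waiting, define $\tilde C^*(s_k)=w^*(I)+W_k+k\,t^p$, verify release-time feasibility from $\tilde C^*(s_k)\ge C^*(s_k)$ together with feasibility of the original schedule, and verify makespan preservation from $w^*_{s_{n^i}}(I)=w^*(I)$. What your version buys is rigor and self-containment: the paper's identity $w^*(I)=z^{*}(I)-z^{*}(I^{r=0})$ is itself only justified by exactly the bookkeeping you carry out, and read literally it is even slightly off, since the CIOS sequence with the waiting stripped out need not be \emph{optimal} for $I^{r=0}$ --- what is really needed is only that its makespan on $I^{r=0}$ equals $z^{*}(I)-w^*(I)$, which is the content of your decomposition. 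Your one informal step --- that no waiting occurs on the terminal return trip, so all of $w^*(I)$ accrues before the last pick --- is harmless, because the makespan is defined as $\max_{s}\{C(s)+\frac{1}{v}d(s,l_d)\}$, which leaves no room for idling after the final item is placed in the cart.
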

\begin{proof}
Consider a counterpart of instance $I$ with zero release times for all orders, $I^{r=0}$. The cumulative waiting and idle time $w^*(I)$ can be reinterpreted as the difference between the perfect-anticipation optimal makespans of these instances: $w^*(I)=z^{*}(I)-z^{*}(I^{r=0})$. The proof immediately follows.
 \end{proof}

\textit{Strategic relocation} is a form of anticipation where the picker moves towards the picking position of an item before its actual arrival, it occurs frequently in the complete-information solutions (CIOSs) (see Section~\ref{sec:CIOS_analysis}). Some studies in the routing literature demonstrated the positive effects of strategic relocation during idle times \citep{Bertsimas1993}.   Lemma~\ref{lemma:strategic_reloc} limits the benefits of anticipatory strategic relocation for the makespan objective. It compares a CIOS with an algorithm ALG that constructs the same routes and the same batches, but without performing strategic relocation, i.e. the picker in ALG remains at its last position until the arrival of the next planned picking item. The improvement potential for ALG is limited by the warehouse's dimensions ($W+L$). 
\begin{lemma}\label{lemma:strategic_reloc}
    For any instance $I$ of OOBSRWP with the \textit{makespan} objective, and the algorithm ALG as defined above, the following holds: 
    \vspace{-0.3cm}
    \begin{align}
    z^{\text{ALG}}(I)\leq z^{*}(I) +  \max_{i,j\in S\cup l_d}\{ d(i,j) \}
    \end{align}
    where $z^{\text{ALG}}(I)$ is the makespan of ALG, $z^*(I)$ is the optimal makespan with perfect anticipation, and $S$ are the picking locations of instance $I$. 
\end{lemma}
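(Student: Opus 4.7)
The plan is to index the items in the order in which they are picked, let $s_1, s_2, \ldots, s_n$ denote this common picking sequence (which ALG inherits from the CIOS by construction), and track the per-item delay
\[
\Delta_k := C^{\mathrm{ALG}}(s_k) - C^{*}(s_k), \qquad \Delta_0 := 0.
\]
Since ALG and the CIOS use identical batches and identical intra-batch routes (including every batch-completion detour to $l_d$), both schedules execute exactly the same walking and picking actions in the same order; they only differ in where the idle time is spent. In particular, the return leg from the last pick to the depot is identical in both, so $z^{\mathrm{ALG}}(I) - z^{*}(I) = \Delta_n$. Hence it suffices to show $\Delta_n \leq \tfrac{1}{v}\max_{i,j\in S\cup\{l_d\}} d(i,j)$ (matching the statement up to the unit factor $1/v$, which is implicit in the paper's distance notation).

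I would then prove by induction on $k$ that $\Delta_{k+1} \leq \max\{\Delta_k,\ \tfrac{1}{v} d(s_k,s_{k+1})\}$, splitting on whether ALG is forced to wait for the release of $s_{k+1}$. In the \emph{no-wait case} $C^{\mathrm{ALG}}(s_k) \geq r(s_{k+1})$, ALG walks immediately from $s_k$ to $s_{k+1}$, so $C^{\mathrm{ALG}}(s_{k+1}) = C^{\mathrm{ALG}}(s_k) + \tfrac{1}{v}d(s_k,s_{k+1}) + t^p$; because the CIOS must also walk this segment and respect $r(s_{k+1})$, one has $C^{*}(s_{k+1}) \geq C^{*}(s_k) + \tfrac{1}{v}d(s_k,s_{k+1}) + t^p$, and subtracting gives $\Delta_{k+1} \leq \Delta_k$. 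In the \emph{wait case} $C^{\mathrm{ALG}}(s_k) < r(s_{k+1})$, ALG waits at $s_k$ until $r(s_{k+1})$ and then walks, yielding $C^{\mathrm{ALG}}(s_{k+1}) = r(s_{k+1}) + \tfrac{1}{v}d(s_k,s_{k+1}) + t^p$; whereas the CIOS cannot pick $s_{k+1}$ before $r(s_{k+1}) + t^p$, so $\Delta_{k+1} \leq \tfrac{1}{v}d(s_k,s_{k+1})$. Either way the recursion holds.

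Unrolling the recursion from $\Delta_0 = 0$ gives $\Delta_n \leq \max_{k} \tfrac{1}{v} d(s_k, s_{k+1}) \leq \tfrac{1}{v} \max_{i,j \in S\cup\{l_d\}} d(i,j)$, which combined with $z^{\mathrm{ALG}}(I) - z^{*}(I) = \Delta_n$ yields the claim.

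The only delicate point is the \emph{wait case}: one must observe that the bound $\tfrac{1}{v}d(s_k,s_{k+1})$ is obtained by comparing ALG not to the CIOS's completion time at $s_k$ (which could be much earlier due to strategic relocation), but directly to the release-time lower bound $r(s_{k+1})+t^p$ for the CIOS's completion time at $s_{k+1}$. This \emph{resets} any previously accumulated delay down to a single walking leg, and is why the gap never compounds across multiple waits — the cleanest way to see this is to note that whenever ALG waits, the two schedules re-synchronize at the moment $r(s_{k+1})$, after which only the at-most-diameter walk $s_k \to s_{k+1}$ separates them.
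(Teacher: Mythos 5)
Your proof is correct and follows essentially the same argument as the paper's: an induction along the common visiting sequence, splitting at each step on whether ALG must wait for the next release, with the no-wait case preserving the accumulated delay and the wait case resetting it to a single walking leg bounded by the warehouse diameter. The only cosmetic differences are that you carry the slightly sharper recursion $\Delta_{k+1}\leq\max\{\Delta_k,\tfrac{1}{v}d(s_k,s_{k+1})\}$ and unroll it at the end (the paper maintains the diameter bound as the invariant directly), and that you keep $t^p$ and the $1/v$ factor explicit where the paper sets $t^p=0$ "for simplicity".
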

\begin{proof}
The Lemma is proven by induction in Appendix~\ref{sec:proof_limited_relocation_gain}
 \end{proof}
\section{Results}\label{sec:experiments}

E-commerce warehouses strive to balance low costs with quick order turnovers, though these objectives often conflict. 
To sharpen our observations, we analyze two idealized systems focusing  solely on one objective, either  \textit{makespan} minimization (representing picker's time and wage costs) or order \textit{turnover} minimization (representing average order turnover). Although these systems do not exist in pure form, our main conclusions align and lead to actionable recommendations that improve \textit{both objectives simultaneously}.

For our experiments, we simulate a two-block rectangular warehouse with $a=3$ cross-aisles and $b=10$ aisles, following standard simulation frameworks from the literature. The depot is located on the central cross-aisle on the left side of the warehouse. The picker moves at $v=$0.8~m/sec and spends $t^p=$10~sec to collect an item at each picking location. In the \textit{Basis} setting orders average 1.5 items (\textit{Smallorders}), the order arrival rate is $r=200$ orders/8h and the picking cart has a capacity of $c=2$ bins. For comparison, orders at German Amazon warehouses average 1.6 items \citep{Boysen2019}. 
We also generate three additional settings using a one-factor-at-a-time design by examining larger orders (\textit{Largeorders}, with 2.5 items per order on average), an increased order arrival rate ($r=250$ orders/8h), and a larger cart capacity ($c=4$ bins). We use the designed algorithm DP, which can solve instances with more than 45 item locations in a reasonable time, to generate CIOSs. We  independently generate 20 instances per setting with $n^{o}=15$ orders each, a standard instance size for similar types of analysis in the warehousing literature \citep[cf.][]{Bukchin2012}. Additionally for the makespan objective, we confirm the \textit{representativeness} of our analysis for larger instances using the nonparametric \textit{Kruskal-Wallis test (KWT)} on independently generated \textit{Basis} instances with $n^{o}=15, 18, $ and $n^{o}=21$ orders. Under general assumptions, the KWT assesses whether the measured metrics for instances with $n^{o}=15$ orders remain consistent for instances with $n^{o}=18 $ and $n^{o}=21$ orders. We expect the outcome of KWT for the turnover objective to mirror that of the makespan. 
In total, we use $20\cdot(4+2)=120$ instances in the analysis of CIOSs. 

For the \textit{makespan} objective, we computed the CIOSs for all instances using the \textit{exact} DP approach from Section~\ref{sec:cost_labeling}. After validating its performance in Section~\ref{sec:exp_performance_DP}, we generated the associated CIOSs for the \textit{turnover} objective with \textit{heuristic} DP from Section~\ref{sec:turnover_heuristic}.   

All experiments have been conducted on the Compute Canada cluster using a maximum of 350 GB RAM and only one CPU for the DP approaches, 
but allowed the parallel computation with 32 CPUs (1 GB RAM each) per instance for the MIP approaches used as a benchmark in Section~\ref{sec:CIOS_analysis}. 
The DP approaches were implemented with Python 3.11.4. and  Gurobi 11.0.0. was used to solve the MIP models.


In the following, after validating the designed DP algorithms in Section~\ref{sec:exp_performance_DP}, the analysis of CIOSs is presented in Section~\ref{sec:CIOS_analysis}. Based on this analysis, Section~\ref{sec:good_online_algs} designs and discusses good myopic online policies and Section~\ref{sec:anticipationQ} estimates the remaining potential of anticipation.

\subsection{Performance of the designed DP algorithms} ~\label{sec:exp_performance_DP}
Tables~\ref{tab:performance_DP} and ~\ref{tab:performance_DP_turnover} validate the  \textit{exact} DP approach from Section~\ref{sec:cost_labeling} and the \textit{heuristic} DP approach from Section~\ref{sec:turnover_heuristic} by comparing their performance to a standard MIP solver (refer to Appendix ~\ref{sec:MIPs} for MIP models), using a one-hour runtime limit. The MIP solver had the advantage of utilizing parallelization with 32 CPUs, while the DP approaches did not use parallelization.

\begin{table}
\centering
\caption{Performance of exact DP-approach for \textbf{makespan} objective compared to benchmarks for runtime limit of 3600 sec}   \label{tab:performance_DP}
\scriptsize{
\begin{tabular}{lrr|rrr|rrr|rrr}
\toprule
\multicolumn{3}{c|}{Instances} &\multicolumn{3}{c|}{DP-approach} & \multicolumn{3}{c|}{Gains of DP-dominance rules} & \multicolumn{3}{c}{MIP solver}  \\ [0.05cm]
 & & items $n^i$  & \multicolumn{2}{c}{Runtime (sec)} & & & \multicolumn{2}{c|}{Runtime reduction} & & \multicolumn{2}{r}{Opt. gap of UB} \\
Setting & $n^o$  & med (max)    & \quad avg  & max  & \# opt & \#opt & \qquad avg & max & LB & \qquad  \qquad avg & max\\
\midrule
$LargeOrders\_c2\_r200$ & 15 & 37.5 (44) & 1478 & 3430 & 18 & 0 & 26\% & 85\% & -- & 11.6\% & 37.3\% \\
$SmallOrders\_c2\_r200$ & 15 & 22 (26) & 51 & 119 & 20 & 0 & 49\% & 99\% & -- & 0.5\% & 3.7\% \\
$SmallOrders\_c2\_r250$& 15 & 22.5 (27) & 74 & 123 & 20 & 0 & 27\% & 82\% & -- & 0.9\% & 4.1\% \\
$SmallOrders\_c4\_r250$ & 15 & 22.5 (25) & 1428 & 3226 & 20 & 3 & 30\% & 72\% & -- & 0.8\% & 4.4\% \\
$SmallOrders\_c2\_r200$ & 18 & 27 (30) & 755 & 1531 & 20 & 0 & 43\% & 96\% & -- & 3.1\% & 13.0\% \\
$SmallOrders\_c2\_r200$ & 21 & 29 (34) & 1216 & 3396 & 5 & 5 & -- & -- & -- & 0.0\% & 0.1\% \\
\midrule
\multicolumn{12}{l}{Abbreviations. med: median; avg:average; max:maximum; \# opt: number instances out of 20 solved to optimality; LB: lower bound}\\
\multicolumn{12}{l}{ \phantom{Abbreviations}  found by MIP-solver; Opt.gap of UB: gap between upper bound found by MIP-solver and optimum found by DP.}\\
\multicolumn{12}{l}{ Note. All values refer to the subset of instances solved by DP within 3600 sec }\\
  \end{tabular}}
\end{table}

\begin{table}
\centering
\caption{Performance of heuristic DP-approach for \textbf{turnover} objective compared to MIP solver for runtime limit of 3600 sec}   \label{tab:performance_DP_turnover}
\scriptsize{
\begin{tabular}{lrr|rrrr|rrrr}
\toprule
\multicolumn{3}{c|}{Instances} &\multicolumn{4}{c|}{DP-approach}  & \multicolumn{4}{c}{MIP solver}   \\ [0.05cm]
 & & items $n^i$  & \multicolumn{2}{c}{Runtime (sec)} & & &   & \multicolumn{3}{r}{gap of UB to DP sol} \\
Setting & $n^o$  & med (max)    & \quad avg  & max  & \# sol & \# sol $\leq$ MIP UB &  LB & \qquad  \qquad avg & max & min \\
\midrule
$LargeOrders\_c2\_r200$ & 15 & 37 (40) & 1498 & 2512 & 16 & 16 (100\%) &  -- & 38\% & 141\%  & 7\%\\
$SmallOrders\_c2\_r200$ & 15 & 22 (26) & 62 & 138 & 20 & 18 (90\%) & -- & 9\% & 49 \% & -7\% \\
$SmallOrders\_c2\_r250$& 15 & 22.5 (27) & 93 & 159 & 20 & 20 (100\%)  & -- & 8\% & 26\% & 1\%\\
$SmallOrders\_c4\_r250$ & 15 & 22 (25) & 1576 & 3267 & 19 & 19 (100\%) & -- & 27\% & 169\% & 2\% \\
$SmallOrders\_c2\_r200$ & 18 & 27 (30) & 955 & 1801 & 20 & 20 (100\%)&  -- & 13\% & 32\% &1 \\
$SmallOrders\_c2\_r200$ & 21 & 30 (34) & 810 & 2410 & 4 & 4 (100\%) & --  & 18\% & 27\% & 5\%\\
\midrule
\multicolumn{11}{l}{Abbreviations. med: median; avg:average; max:maximum; \# sol: number instances out of 20 solved heuristically; LB: lower bound}\\
\multicolumn{11}{l}{ \phantom{Abbreviations}  found by MIP-solver; gap of UB to DP sol: gap between upper bound found by MIP-solver and DP solution.}\\
\multicolumn{11}{l}{ Note. All values refer to the subset of instances solved by DP within 3600 sec }\\
  \end{tabular}}
\end{table}

The MIP solver was not able to solve \textit{any} instance to proven optimality, likely due to weak lower bounds. 

For the \textit{makespan} objective,  DP solved almost all (103 out of 120) instances to \textit{proven optimality} (see Table~\ref{tab:performance_DP}). Customized dominance rules were highly effective, reducing the DP's runtime by an average of 36\%, and up to 99\% in some cases, enabling the optimal solution of 8 additional instances.

For the \textit{turnover} objective (see Table~\ref{tab:performance_DP_turnover}), the DP produced significantly better solutions than the MIP solver, with 98\% of the solutions improving upon the MIP's upper bound.

For our analysis of the CIOSs in the following sections, we ran the DP without a time limit for both objectives. Specifically, we computed \textit{optimal solutions for all 120 instances} in the dataset for the makespan objective. 
Overall, Tables~\ref{tab:performance_DP} and~\ref{tab:performance_DP_turnover} emphasize that without the customized design of the DP algorithms, the CIOS analysis in the following sections would have been impossible. 

\subsection{Analysis of complete-information optimal solutions (CIOSs)} \label{sec:CIOS_analysis}

Depending on the objective, the operations of the warehouse change. 
The \textit{makespan} is effectively minimized in warehouses with high order arrival rates: With more frequent arrivals, long queues allow the picker to select orders whose items are located close to each other into the same batch, thus, reducing the picking time per order. High arrival rates also decrease idle time, as there are fewer moments when no orders are available. As a result, most batches in CIOSs utilize the full capacity of the picking cart (see Figure~\ref{fig:Batchsize}). For a four-bin cart (\textit{Small\_c4\_r200}), an average of 53\% of all batches are composed of four orders.  In the remaining settings with two-bin carts, batches of two orders account for 77-86\% of all batches.

In contrast, warehouses that prioritize average \textit{turnover} achieve the lowest turnover times at low order arrival rates. Here, the picker often remains idle at the depot, enabling immediate picking upon an order's arrival, usually in a single-order batch. Consequently, 48\% of CIOS batches are single-order batches (see Figure~\ref{fig:Batchsize}).  Indeed, picking a smaller order as a single-order batch first can offset moderate savings from collecting multiple orders simultaneously, resulting in a smaller average order turnover.

\begin{figure}
    \begin{subfigure}[b]{0.215\textwidth}
        \includegraphics[width=\textwidth]{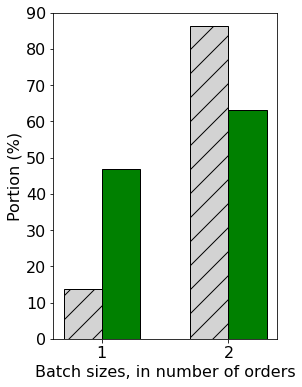}
        \caption*{$LargeOrders\_c2\_r200$}
        \label{fig:a}
    \end{subfigure}
    ~ 
    \begin{subfigure}[b]{0.2\textwidth}
        \includegraphics[width=\textwidth]{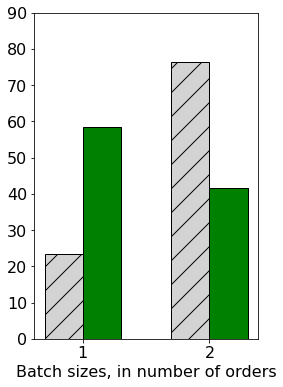}
        \caption*{$SmallOrders\_c2\_r200$}
        \label{fig:b}
    \end{subfigure}
    ~
    \begin{subfigure}[b]{0.2\textwidth}
        \includegraphics[width=\textwidth]{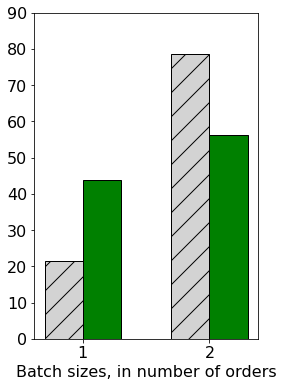}
        \caption*{$SmallOrders\_c2\_r250$}
        \label{fig:b}    
    \end{subfigure}
    ~ 
    \begin{subfigure}[b]{0.28\textwidth}
        \includegraphics[width=\textwidth]{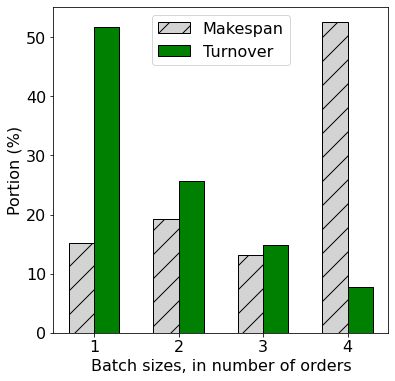}
        \caption*{$SmallOrders\_c4\_r200$}
        \label{fig:b}
    \end{subfigure}
\caption{Portion of batches of different sizes}\label{fig:Batchsize}    
\end{figure}

We continue by examining batching (Section~\ref{sec:CIOS_batching}), routing (Section~\ref{sec:CIOS_routing}), and anticipation in CIOSs. Section~\ref{sec:CIOS_strategic_waiting} covers strategic waiting, with additional forms of anticipation discussed in Section~\ref{sec:CIOS_anticipation}. We focus on identifying  decisions that improve \textit{both} objectives simultaneously. For clarity, we compare certain decisions in CIOSs with two common planning heuristics: \textit{first-in-first-out (FIFO)} batching and \textit{myopic reoptimization (Reopt)}. FIFO-batching forms batches by assigning orders as they arrive, while Reopt is a myopic policy that reoptimizes based on the current queue of available orders each time a new order arrives.

\subsubsection{Batching in CIOSs} \label{sec:CIOS_batching}
Batching decisions are the most costly in terms of runtime when generating CIOS. 

\begin{table}
\caption{Batching decisions in CIOSs compared to those in FIFO }\label{tab:ordersequencing}
\centering
    \scriptsize{
\begin{tabular}{crc|cc|cc}
\toprule
&&&\multicolumn{2}{c|}{\underline{Makespan objective} \vspace{0.5mm}}& \multicolumn{2}{c}{\underline{Turnover objective}}\\
& & & \# batches with & \# orders with & \# batches with & \# orders with\\
 & Setting & $n^o$ & non-consecutive orders & altered positions& non-consecutive orders & altered positions\\
\midrule
Analysis & $LargeOrders\_c2\_r200$  & 15 & 2.7 (34\%) & 5.1 (34\%) & 2.6 (29\%) & 6.5 (43\%)\\
 & $SmallOrders\_c2\_r200$  & 15 & 2.2 (26\%) & 4.0 (26\%) & 1.6 (16\%) & 5.0 (33\%)   \\
 & $SmallOrders\_c2\_r250$   & 15 & 1.9 (23\%) & 4.2 (29\%)  & 2.3 (24\%) & 5.7 (38\%) \\ 
& $SmallOrders\_c4\_r250$  & 15 & 1.3 (25\%)  & 4.0 (26\%) & 1.0 (14\%)  & 5.4 (36\%) \\
& Overall & 15 & 2.0 (27\%) & 4.3 (29\%)  & 1.8 (21\%) & 5.6 (37\%) \\
\midrule
Validation & $SmallOrders\_c2\_r200$  & 15 & 26\% & 26\% &&\\
 & $SmallOrders\_c2\_r200$  & 18 & 23\% & 28\% &&\\
 & $SmallOrders\_c2\_r200$   & 21 & 18\% & 17\% &&\\
&  \multicolumn{2}{r|}{Kruskal Wallis test p-values} & 0.35 & 0.08 &&\\
\midrule
\multicolumn{7}{l}{Note. Average number of batches with non-consecutive orders and average number of orders that deviate from their FIFO position, per instance. }\\
\end{tabular}
}
\end{table}

Table~\ref{tab:ordersequencing} shows significant differences between CIOS and FIFO batches for both examined objectives -- \textit{makespan} and \textit{turnover}. Unlike FIFO, 23\%-34\% of CIOS batches do not contain consecutively arrived orders for the former objective, 14\%-29\% of batches for the latter. 
When arranging orders in each CIOS batch by  arrival time for the \textit{makespan} objective, about 30\% of orders are at altered positions in the resulting sequence compared to FIFO.  The order sequence appears more important for  warehouses operating under the \textit{turnover} objective (overall share of orders with altered positions of 37\%).

A primary reason for deviating from the FIFO order sequence in batching is to improve the matching of jointly collected orders. \textit{In case of $c=2$}, this matching quality can be measured as savings \citep[cf.][]{deKoster1999}. The \textit{savings} $\psi(B)$ for a batch $B$ is defined as the difference between the shortest picking time for $B$ (noted $\chi(B)$) and the sum of the individual shortest picking times for each order in $B$ (noted $\chi(o), o \in B$). The measure $\psi(B)$ is normalized from 0 to 1, where 0 indicates no savings (picking each order separately takes the same time as picking them jointly) and 1 indicates the maximum possible savings (picking both  orders separately takes twice as long as picking them jointly):
\begin{align}
\psi(B) = \frac{\sum\limits_{o \in B} \chi(o)-\chi(B)}{\chi(B)}
 \end{align}
 For settings with $c=2$, Table~\ref{tab:batch_savings} compares the savings in two-order CIOS batches with those of \textit{randomly} selected two-order batches. For the random batches, we sample from all possible two-order combinations for the same instance. Observe that CIOS batches have higher savings: about 70\% of CIOS batches exceed the median savings of randomly generated batches for the \textit{makespan} objective, and 77\% for the \textit{turnover} objective, respectively. The greater savings for the \textit{turnover} objective can be attributed to higher opportunity costs associated with batching in this case, as discussed in the introduction to Section~\ref{sec:CIOS_analysis}.

\begin{table}
\centering
\caption{Quality of batches in CIOS: Savings of two-order batches in settings with batching capacity $c=2$}\label{tab:batch_savings}
    \scriptsize{
    \begin{tabular}{ccc|cc|cc|c}
    \toprule
     & & & \multicolumn{2}{c|}{\underline{Makespan objective} \vspace{0.5mm}}& \multicolumn{2}{c|}{\underline{Turnover objective}} & \\
    & & & Mean CIOS  & \# CIOS batch savings  & Mean CIOS & \# CIOS batch savings & Mean random\\
      & Setting & $n^o$ &  batch savings & $>$ random median& batch savings & $>$ random median& batch savings\\
     \midrule
  Analysis & $LargeOrders\_c2\_r200$  & 15 & 0.36 & 4.8 (69\%)& 0.37 & 4.2 (72\%) &0.28  \\ 
   & $SmallOrders\_c2\_r200$   & 15 & 0.33 & 4.4 (68\%) & 0.37 & 3.4 (79\%)& 0.26 \\ 
     & $SmallOrders\_c2\_r250$   & 15 & 0.37   & 4.7 (71\%)& 0.40 & 4.5 (81\%) & 0.27\\ 
     & Overall  & 15 & 0.35 & 4.6 (70\%) & 0.38 & 4.0 (77\%) & 0.27  \\ 
     \midrule
   Validation & $SmallOrders\_c2\_r200$   & 15 & 0.33 & 68\% &&& 0.26 \\  
   & $SmallOrders\_c2\_r200$   & 18 & 0.38  & 75\% &&& 0.27\\ 
     & $SmallOrders\_c2\_r200$  & 21 & 0.33 & 71\%&&& 0.26 \\ 
     & \multicolumn{2}{r|}{Kruskal Wallis test p-values}   & 0.02 & 0.18  &&& 0.77 \\ 
    \midrule
   \multicolumn{8}{l}{Note. \textit{Columns 4, 6,8}: Average of the mean savings of the formed 2-order batches per instance. \textit{Columns 5,7}: Number of 2-order batches }\\
    \multicolumn{8}{l}{ \phantom{Note} whose savings are larger than the median of the random 2-order batches of this instance, averaged over all instances. }
    \end{tabular}
    }
\end{table}

\begin{observation}[Batching]\label{obs:batching1}
Batching in CIOSs differs significantly from the myopic FIFO-rule, in particular, CIOS batches show a deliberate improvement in the quality of order matching.    
\end{observation}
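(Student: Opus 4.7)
The statement to be substantiated is an empirical observation, so the ``proof'' must be a computational/statistical argument rather than a deductive one. My plan is to substantiate the two clauses of the observation separately: (i) that CIOS batches deviate substantially from FIFO, and (ii) that this deviation is not arbitrary but improves the quality of order matching.

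For clause (i), I would operationalize ``differs from FIFO'' through two complementary metrics. The first counts, per CIOS, the number of batches that contain non-consecutive orders in the arrival sequence $O$, since any FIFO batch is by construction a contiguous block of $O$. The second counts, per CIOS, the number of orders whose index in the (arrival-sorted) batch sequence differs from their index in the FIFO sequence. I would then report these metrics averaged over the $20$ instances per parameter setting and under both the makespan and turnover objectives (Table~\ref{tab:ordersequencing}). The target is to show that both metrics are bounded well away from zero across all four settings; the percentages in the $20$--$40\%$ range demonstrate systematic rather than incidental departures from FIFO.

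For clause (ii), I would introduce a matching-quality metric that is independent of batch size. Restricting to the $c=2$ settings for a clean exposition, I define the normalized savings $\psi(B) = (\sum_{o \in B}\chi(o) - \chi(B))/\chi(B)$, where $\chi(\cdot)$ denotes the single-batch optimal picking time computable from the warehouse topology. To argue that CIOS batches are chosen to exploit such savings rather than being by-products of other considerations, I would build a null baseline by sampling all two-order combinations of orders appearing in the same instance, and compare CIOS batch savings against the median of this baseline batch-by-batch. Showing that roughly $70\%$--$80\%$ of CIOS batches exceed this median (Table~\ref{tab:batch_savings}), and that the mean CIOS savings are $30\%$--$50\%$ above the random mean, then supports the ``deliberate improvement'' phrasing.

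Finally, since the DP can only produce CIOSs for moderately sized instances, the main methodological obstacle is representativeness: the reader might worry that the batching patterns observed at $n^o = 15$ are artifacts of the small instance size. To address this, I would apply the nonparametric Kruskal--Wallis test to the per-instance metrics computed for $n^o \in \{15,18,21\}$ in the $SmallOrders\_c2\_r200$ setting, using the null hypothesis that the distributions of (a) the fraction of non-FIFO batches, (b) the fraction of altered-position orders, and (c) the fraction of CIOS batches exceeding the random-savings median, are identical across the three instance sizes. Non-rejection of this null (as reflected in the $p$-values reported in the validation rows) then certifies that the observed batching behavior is not peculiar to $n^o=15$. The delicate point here is ensuring the metrics are normalized consistently across instance sizes so that the Kruskal--Wallis comparison is meaningful; I would handle this by expressing all metrics as per-instance fractions rather than raw counts.
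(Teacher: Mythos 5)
Your proposal is correct and follows essentially the same route as the paper: it quantifies the deviation from FIFO via the counts of batches with non-consecutive orders and of orders at altered positions (Table~\ref{tab:ordersequencing}), substantiates the ``deliberate improvement'' via the normalized savings $\psi(B)$ of two-order CIOS batches against a random-pairing baseline (Table~\ref{tab:batch_savings}), and validates representativeness with the Kruskal--Wallis test over $n^o\in\{15,18,21\}$. No substantive differences from the paper's own argument.
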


Upon a closer examination, batching in CIOS differs from most batching policies discussed in the warehousing literature (see Section~\ref{sec:lit_review} for an overview) in one key aspect:  most orders are added to a batch \textit{after} the picker has left the depot to collect items of  this batch. This applies to 64\% and 62\% orders for the \textit{makespan} and \textit{turnover} objectives, respectively (see Table~\ref{tab:intervention}). Policies that allow modifications to an active batch, like in CIOS, are termed \textit{interventionist}, while \textit{non-interventionist} policies do not allow changes once a batch is started  \citep[cf.][]{Giannikas2017}. Similar to strategic waiting, intervention allows for savings from batching when order queues are short. However, since the picker does not stay idle, but continues moving towards known orders instead of waiting, intervention is a \textit{less costly} alternative to strategic waiting in terms of the opportunity costs, especially if no new orders arrive quickly.

\begin{observation}[Intervention in Batching]\label{obs:Intervention}
Most orders in CIOSs are added to a batch dynamically, \textit{after} the picker's departure from the depot to collect the items of  this batch.
\end{observation}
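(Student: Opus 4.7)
The plan is to establish this claim empirically from the computed CIOSs, since the observation is a statistical property of optimal solutions rather than a mathematical identity. First I would formalize \emph{intervention} precisely: for each order $o_j$ belonging to batch $B_l$ in a CIOS, let $t^{\text{dep}}(B_l)$ denote the clock time at which the picker leaves the depot to begin collecting $B_l$, obtained by tracing back through the DP's picking schedule $C(\cdot)$ from the first picked item of $B_l$ (equivalently, the completion time of $B_{l-1}$ along the optimal path in the state graph). Classify $o_j$ as added via intervention if and only if $r_j > t^{\text{dep}}(B_l)$, i.e., the order arrived strictly after the picker had committed to $B_l$.

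With this definition fixed, I would run the exact DP of Section~\ref{sec:cost_labeling} on every instance across the four primary settings and the two validation settings, decode each optimal path through the state graph into its batch partition and schedule, and apply the classification above to every order. Aggregating into per-setting and per-objective frequencies yields the $64\%$ (makespan) and $62\%$ (turnover) headline figures cited just before the Observation, together with per-setting breakdowns that can be tabulated alongside Table~\ref{tab:ordersequencing}. To rule out a small-instance artefact, I would additionally run the Kruskal--Wallis test on intervention frequencies for $n^o \in \{15, 18, 21\}$ in the \textit{SmallOrders\_c2\_r200} setting, mirroring the validation template used in Tables~\ref{tab:ordersequencing} and~\ref{tab:batch_savings}.

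The main obstacle is pinning down edge cases that could otherwise inflate or deflate the intervention count. The subtle one is strategic relocation: Lemma~\ref{lemma:strategic_reloc} shows that in CIOSs the picker may deliberately leave the depot before all constituents of $B_l$ have arrived, intending to absorb a predicted arrival mid-route. Under the definition above such orders are correctly counted as intervention, but one must confirm each assignment by reading the $S^{\text{batch}}$ and $m^o$ components of the states along the optimal path, rather than only the final batch membership, in order to distinguish orders that were pre-committed at departure from those genuinely reassigned afterwards. A secondary concern is justifying the informal word ``most'' quantitatively: I would report the \emph{minimum} per-setting intervention share and verify that it exceeds $50\%$ in every setting and under both objectives, thereby strengthening the claim beyond the averaged $60\%$-range figures.
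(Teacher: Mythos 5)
Your proposal matches the paper's own justification: the Observation is established empirically by computing CIOSs with the DP, counting the orders whose release time falls after the picker's departure for their batch, tabulating the shares per setting and objective (Table~\ref{tab:intervention}, 64\% and 62\% overall), and validating stability across $n^o\in\{15,18,21\}$ with the Kruskal--Wallis test. Your added care in defining $t^{\text{dep}}(B_l)$ and in checking the per-setting minimum is a sensible refinement but does not change the approach.
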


\begin{figure}
\begin{minipage}{0.65\textwidth}
\centering
\captionof{table}{Orders added by intervention in CIOS}\label{tab:intervention}
\scriptsize{
\begin{tabular}{@{}crc|c|c}
\toprule
& & &\underline{Makespan objective} \vspace{0.5mm}& \underline{Turnover objective}\\
 & Setting & $n^o$ & \# orders & \# orders\\
\midrule
Analysis & $LargeOrders\_c2\_r200$ & 15 & 7.7 (5\%) & 7.8 (52\%) \\
& $SmallOrders\_c2\_r200$  & 15 & 10.3 (69\%) & 10.7 (71\%) \\
 & $SmallOrders\_c2\_r250$ & 15  & 8.8 (59\%) & 8.9 (59\%)\\ 
& $SmallOrders\_c4\_r250$ & 15 & 11.4 (76\%)& 10.1 (67\%)\\
& Overall & 15 & 9.5 (64\%) & 9.4 (62\%)\\
    \midrule
Validation &  $Smallorders\_c2\_r200$  & 15 &  69\%& \\ 
 & $Smallorders\_c2\_r200$  & 18 &  62\% &\\
 & $Smallorders\_c2\_r200$  & 21 &  69\%& \\
 & \multicolumn{2}{r|}{Kruskal-Wallis test; p-value} & 0.58 &\\
    \midrule    
    \multicolumn{5}{l}{Note. Average number of orders per instance added by intervention.}
\end{tabular}}
\hfill
\end{minipage}
\begin{minipage}{0.33 \textwidth}
\includegraphics[scale=0.5]{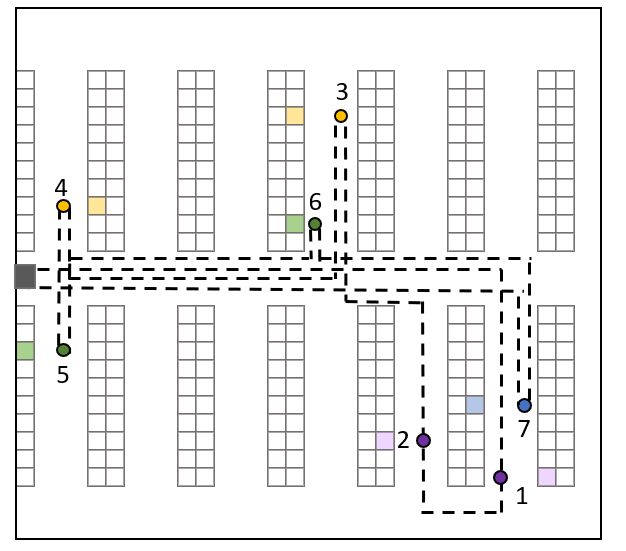}
\captionof{figure}{Exemplary CIOS batch route} \label{fig:strage_route}
\scriptsize{The numbers indicate the visiting sequence. A CIOS route for an instance in $SmallOrders\_c4\_r250$ with $n^o=15$ and the \textit{makespan} objective }
\label{fig:strategic_relocation}  
\end{minipage}
\end{figure}

\subsubsection{Routing in CIOSs} \label{sec:CIOS_routing}

Figure~\ref{fig:strage_route} shows a typical picker route for completing a batch in CIOSs, featuring three horizontal direction changes while crossing the central cross-aisle and  two cases where sub-aisles (4th upper and 6th lower) are entered  twice from the same side. We observed CIOS batch routes with up to five horizontal direction changes within the cross-aisles.
This contrasts sharply with common routing heuristics like \textit{S-shape} or \textit{largest-gap} policies, where sub-aisles are entered at most once from each side, and  horizontal direction changes while traversing the cross-aisles are limited to one and three, respectively \citep[see][for a detailed discussion]{Roodbergen2001a,LeDuc2007}. The key factor driving the CIOS routing pattern is \textit{intervention} (cf. Observation~\ref{obs:Intervention}), where newly arrived orders are added to an active batch. 

\begin{observation}[Routing] \label{obs:Routing} CIOS routing differs significantly from common routing policies.
\end{observation}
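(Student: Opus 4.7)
The plan is to support Observation~\ref{obs:Routing} empirically, in the same spirit as the batching analysis in Table~\ref{tab:ordersequencing}, rather than via a formal derivation. First, for every batch route produced by the DP across all $120$ instances and both objectives, I would extract a handful of structural invariants that characterize the standard heuristics referenced immediately before the observation: the number of horizontal direction changes performed while crossing the cross-aisles of a batch, the number of sub-aisles entered more than once from the same side, and the ratio of the realized batch-route length to the length of the optimal static Ratliff--Rosenthal route through the same set of picking locations. Each of these invariants has a tight upper bound under S-shape or largest-gap routing (at most one and at most three direction changes, respectively, and each sub-aisle entered at most once per side).

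Second, I would report, per setting, the fraction of CIOS batch routes that violate each invariant, and display a few particularly striking examples such as Figure~\ref{fig:strage_route}. Since a single violation suffices to rule out the corresponding heuristic for that batch, the aggregate violation frequency itself quantifies the word \emph{significantly} in the observation. To guard against instance-size artefacts, I would additionally apply a Kruskal--Wallis test on these fractions across $n^{o}\in\{15,18,21\}$ in the $SmallOrders\_c2\_r200$ setting, mirroring the validation strategy used for Observations~\ref{obs:batching1} and~\ref{obs:Intervention}.

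Third, I would tie the deviation causally back to the intervention phenomenon established in Observation~\ref{obs:Intervention}. Because 62\%--64\% of orders join a batch \emph{after} the picker has left the depot, any initially clean S-shape or largest-gap tour for the starting pick list has to be deformed on the fly to collect newly inserted items, which mechanically produces extra direction changes and repeated aisle entries --- exactly the features captured by the invariants above. The main obstacle here is interpretive rather than technical: showing that the violations are \emph{systematic and attributable to intervention} rather than idiosyncratic noise. Cross-tabulating the per-batch invariant violations against the per-batch intervention count, and checking that routes with no interventions tend to respect the heuristic bounds while routes with many interventions tend to violate them, is the cleanest way I see to make that causal link visible without proving anything about the structure of optimal routes themselves.
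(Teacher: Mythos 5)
Your proposal takes essentially the same approach as the paper: the paper supports this observation by exhibiting the exemplary CIOS batch route of Figure~\ref{fig:strage_route} (three horizontal direction changes, two sub-aisles entered twice from the same side, with up to five direction changes observed elsewhere), contrasting these against the known structural bounds of S-shape and largest-gap routing, and attributing the deviation to intervention exactly as you do. The only difference is one of thoroughness rather than method --- the paper rests on a single illustrative route and a qualitative maximum, whereas you propose tabulating violation frequencies of the same invariants across all instances with a Kruskal--Wallis validation; that would strengthen, not change, the argument.
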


\subsubsection{Strategic waiting} \label{sec:CIOS_strategic_waiting}

As discussed in Section~\ref{sec:intro}, strategic waiting -- deliberately delaying picking to enhance future operations -- is one of the least understood concepts in the warehousing literature. 

Based on the analysis of CIOSs, we identify two types of strategic waiting. Note that, since CIOS operates with perfect anticipation, such waiting occurs at the first picking location of the future order:
\begin{itemize}
\item \textit{Waiting for batch extension (B.ext.)}: The picker has collected all items for current orders but, instead of delivering the collected orders to the depot, waits in the field for the next order to add to the active batch, increasing savings from batching and avoiding additional trips to the depot. 
\item \textit{Waiting for a better fit (B.fit)}: The picker waits at the picking location of a future order, even though items of other orders are available for picking, because the future order better fits  the current batch route.
\end{itemize}

\begin{table}
    \centering
    \caption{Strategic waiting in CIOSs}\label{tab:waiting_types_CIOS}
    \scriptsize{
    \begin{tabular}{@{}ccc|c|ccc|c@{}c@{}c|c@{}c@{}c@{}}
    \toprule
    & & & & \multicolumn{3}{c}{Avg. total waiting time} &  \multicolumn{3}{c}{Waiting for batch extension} & \multicolumn{3}{c}{ Waiting for a better fit} \\
    Obj.  & Setting &  $n^o$ & Makesp.  & Idleness & B.ext  & B.fit & \# orders &  \ Avg time & \ 80P time &\# orders & \ Avg time & \ 80P time \\
     \midrule
 \multirow{5}{*}{\rotatebox[origin=c]{90}{\makecell{Makesp. \\ Analysis }}} & $LargeOrders\_c2\_r200$ & 15 & 2488  & 277 (10\%) & 15 (1\%) & 16 (1\%) & 0.6 (4\%) & 27 &55  & 0.8 (5\%) & 22 & 32 \\ 
  & $SmallOrders\_c2\_r200$ & 15  & 2447 &  729 (27\%) & 77 (3\%) & 37 (1\%) & 1.1 (7\%) & 70 & 126 & 1.1 (7\%) & 35 & 46  \\ 
   & $SmallOrders\_c2\_r250$ & 15  & 1975  & 320 (15\%) & 58 (3\%) & 23 (1\%)  & 1.1 (7\%) & 55 & 101 & 0.7 (5\%) & 35 & 50 \\ 
    & $SmallOrders\_c4\_r250$ & 15  & 1773 & 193 (9\%) & 111 (6\%) & 83 (4\%) &  1.7 (11\%) & 66  & 105 & 2.3 (15\%) & 36 & 67  \\
    & Overall & 15 & 2171 & 380 (15\%) & 65 (3\%) & 40 (2\%) & 1.1 (7\%) & 59 & 106 & 1.2 (8\%) & 33 & 48\\
    \midrule
   \multirow{4}{*}{\rotatebox[origin=c]{90}{\makecell{Makesp.\\ Valid.}}}  & $SmallOrders\_c2\_r200$  & 15 & 2447 &  27\% & 3\% & 1\% & 7\%  & 70 & 126 & 7\% & 35 & 46   \\
   & $SmallOrders\_c2\_r200$  & 18 & 2571 &  15\% & 7\% & 2\% &  12\%  &   92 & 166 & 6\%& 39 & 36   \\
   & $SmallOrders\_c2\_r200$  & 21 & 3395 & 23\% & 6\% & 2\% &  14\%  & 73  & 123 & 5\% & 52 & 69   \\  
   & \multicolumn{2}{r|}{Kruskal Wallis test p-values} & -- &  0.01 & 0.02 & 1.00 & < 0.01  & 0.43 & --& 0.97 &  0.57 & -- \\ 
    \midrule
   \multirow{4}{*}{\rotatebox[origin=c]{90}{\makecell{Turnover}}} & $LargeOrders\_c2\_r200$ & 15 & 2569 & 275 (10\%) & 3 (0\%) & 17 (1\%) &  0.1 (1\%)  & 34 & 51 & 0.8 (5\%) & 22 & 37   \\
  & $SmallOrders\_c2\_r200$ & 15  & 2496 & 791 (29\%) & 2 (0\%) & 16 (1\%) &  0.2 (1\%) & 12 & 20 & 1.2 (8\%) & 14 & 21   \\
     &$SmallOrders\_c2\_r250$ & 15  & 2016  &  374 (17\%) & 6 (0\%) & 11 (1\%) & 0.5 (3\%)  & 14 & 21 & 0.8 (5\%) & 14 & 19 \\ 
   & $SmallOrders\_c4\_r250$ & 15 & 1842 &  294 (14\%) & 10 (0\%) & 13 (1\%) &  0.6 (4\%) & 17 & 32 & 1.0 (7\%) & 13 & 22  \\ 
   & Overall & 15 & 2225 & 434 (17\%) & 5 (0\%) & 14 (1\%) & 0.3 (2\%) & 17 & 32 & 0.9 (6\%) & 15 & 24 \\
    \midrule
   \multicolumn{13}{l}{Note.\textit{Columns 4-6}: Average total time per instance  spent waiting (as \% of the makespan).}\\
   \multicolumn{13}{l}{ \phantom{Note.}\textit{Columns 7 and 10, \# orders}: Average number of orders per instance waited for in type B.ext- and B.fit-waiting, respectively. \textit{Columns 8} } \\
   \multicolumn{13}{l}{ \phantom{Note.}\textit{and 11 (9 and 12), Avg time (80P time)}: Average (80-percentile) duration of B.ext- and B.fit-waiting, respectively.} \\
    \end{tabular}
    }
\end{table}

Table~\ref{tab:waiting_types_CIOS} compares strategic waiting and \textit{idle time} in CIOSs. The latter occurs when all available orders have been picked and delivered to the depot. For both the \textit{makespan} and  \textit{turnover} objectives, strategic waiting is significantly less than  idle time. 
When measured in terms of the makespan, strategic waiting accounts for merely 5\% of the makespan on average, compared to  15\% of idle time for the \textit{makespan} objective. For the \textit{turnover} objective, it is even 1\% of waiting compared to 17\% of idle time. Notably, while strategic waiting is frequent -- occurring before 6\% to 26\% of orders  -- it is very brief, lasting just 12-70 seconds on average. 

This explains the monotonous decrease in performance for both objectives of the Reopt policy when adopting a \textit{variable time window strategy} (VTW) \citep[cf.][]{GilBorras2024} which enforces picker waiting times at the depot, until there are at least $N$ orders in the queue, see Figure~\ref{fig:waiting_lineplots}. Even 
at $N=2$, the waiting affects too many orders and is significantly longer than CIOS waiting (more than 92-184 seconds on average), see Table~\ref{tab:waiting_times_VTW}. Furthermore, 21-52\% of the policy's waiting was \textit{misplaced}, as the associated waiting times $w_j$ for the second order's arrival exceeded the time $\chi(o_j)$ in which the delayed order $o_j$ could have been completed in a single-order batch. This revises recommendations on optimal waiting times based on the \textit{variable time window batching policy (VTWB)} \citep{Vannieuwenhuyse2009}, where the gain from waiting is a \textit{concave} function in $N$, with maximum gain possibly achieved when waiting for a minimum queue of $N\geq2$. Our analysis shows the gain stems from the restrictive FIFO-batching and the lack of intervention in VTWB and is nullified when applying optimal batching and intervention (as in Reopt).   

\begin{figure}
    \begin{subfigure}[b]{0.4\textwidth}
        \includegraphics[width=\textwidth]{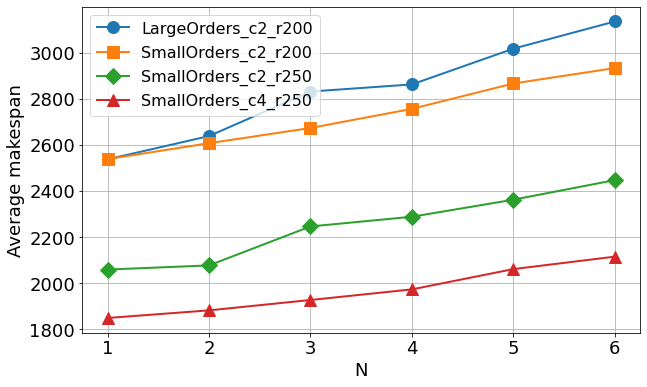}
        \caption*{(a) Makespan objective}
        \label{fig:pushcart}
    \end{subfigure}
    ~ \hfill
    \begin{subfigure}[b]{0.4\textwidth}
        \includegraphics[width=\textwidth]{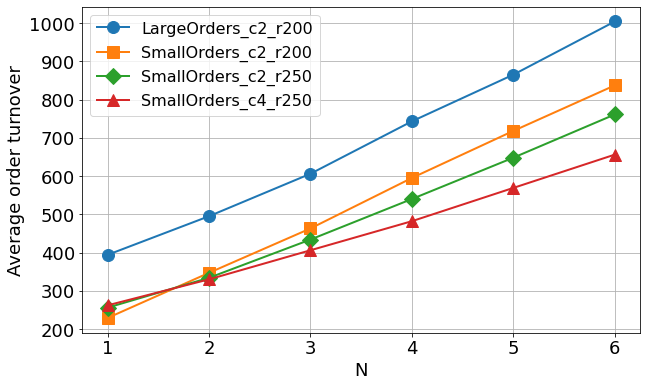}
        \caption*{(b) Turnover objective}
        \label{fig:pushcart}
    \end{subfigure}
\caption{Performance of Reopt when waiting for $N$ orders at the depot (variable time window strategy)}\label{fig:waiting_lineplots}    
\end{figure}

\begin{table}
\caption{Waiting times in solutions of Reopt under variable time window strategy with $N=2$}\label{tab:waiting_times_VTW}
\centering
\scriptsize{
\begin{tabular}{l|rrr|rrr}
\toprule
& \multicolumn{3}{c}{\underline{Makespan objective} \vspace{0.5mm}} & \multicolumn{3}{c}{\underline{Turnover objective}}  \\
Setting ($n^o=15$) & \# orders & Avg time & \# $w_j>\chi(o_j)$ & \# orders & \qquad Avg time & \# $w_j>\chi(o_j)$\\
\midrule
$LargeOrders\_c2\_r200$ \qquad \qquad  & 2.3 (15\%) & 134 & 22\% & 3.3 (22\%) & 116 & 23\%   \\ 
$SmallOrders\_c2\_r200$  & 3.8 (25\%) & 164 & 43\% & 4.5 (30\%) & 184 & 52\%   \\ 
$SmallOrders\_c2\_r250$ & 2.9 (19\%) & 103 & 21\% & 3.7 (25\%) & 115 & 33\% \\ 
$SmallOrders\_c4\_r250$  & 2.8 (19\%) & 92 & 21\% & 3.9 (26\%) & 95 & 27\%  \\ 
Overall & 3.0 (20\%) & 123 & 27\% & 3.9 (26\%) & 128 & 34\% \\
    \midrule
    \multicolumn{7}{l}{Note. \textit{Columns 2 \& 5, \#orders:} Average number of orders per instance delayed because of waiting for the second order. }\\
    \multicolumn{7}{l}{\phantom{Note.} \textit{Columns 3 \& 6, Avg time:} Average duration of waiting, after first order has arrived. \textit{Columns 4 \& 7, $w_j>\chi(o_j)$:}}\\
    \multicolumn{7}{l}{\phantom{Note.}  Percentage of delayed orders that could have been picked in a one-order batch within waiting time. }\\
\end{tabular}
}
\end{table}

We also assessed the common advice in the literature to avoid waiting if there are enough orders in the queue to complete a batch \citep[cf.][]{Henn2012, Alipour2018, GilBorras2024, Giannikas2017}. Contrary to this advice, in 28\% and 41\% of the observed cases of waiting for a better fit in CIOSs, for the \textit{makespan} and \textit{turnover} objective, respectively, enough orders were available to form a full batch.

Differences in strategic waiting behavior in CIOSs are observed depending on the warehouse's objective. 

Since one-order batches dominate under the \textit{turnover} objective, strategic waiting aimed at better batching is less relevant.  This is especially true for waiting for batch extensions, which is negligible—meaning the picker almost always returns orders to the depot when the order queue is empty.  On average, strategic waiting accounts for only about 1\% of the makespan and lasts just 12-17 seconds per occurrence in settings with small orders. For larger orders, where even one-order batches take longer, the tolerance for waiting is slightly higher (22-34 seconds per occurrence), though strategic waiting happens less frequently. 

\begin{observation}[Turnover: Strategic Waiting] \label{obs:Waiting}
Under the \textit{turnover} objective, only a negligible amount of time is spent on strategic waiting, and each occurrence of strategic waiting is brief and well-timed.  
\end{observation}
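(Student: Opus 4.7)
The plan is to establish this observation empirically by analyzing the CIOSs computed for the \textit{turnover} objective using the heuristic DP of Section~\ref{sec:turnover_heuristic}, aggregating across the 80 turnover-objective instances (20 per setting) the three quantities that correspond to the three sub-claims: total strategic waiting time (for negligibility), per-occurrence duration (for brevity), and a precise notion of timing quality (for well-timedness). Concretely, I would post-process each CIOS by scanning the picking schedule $\tilde C(s)$ against the walking and picking times, flagging any gap at a picking location as either B.ext (cart contains items, queue empty, picker outside the depot) or B.fit (queue nonempty but the picker waits at a future order's location), then computing per-instance sums and per-occurrence statistics.

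For negligibility, I would sum B.ext and B.fit times in each turnover CIOS and normalize by the makespan; the resulting rows of Table~\ref{tab:waiting_types_CIOS} give at most $\approx 1\%$ each, versus $10$–$29\%$ of pure idle time, which is the quantitative content of ``negligible.'' For brevity, I would report per-occurrence means and 80th percentiles; the table shows roughly $12$–$17$ seconds for small-order settings (and $22$–$34$ for large orders), which is shorter than a single picking time $t^p=10$s plus a short intra-aisle walk. For well-timedness, the cleanest formalization is the criterion already used for VTW in Table~\ref{tab:waiting_times_VTW}: call an occurrence that delays the acquisition of order $o_j$ by $w_j$ \emph{well-timed} if $w_j \leq \chi(o_j)$, meaning the picker could not have completed $o_j$ alone in the same time window. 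I would verify that, in the turnover CIOSs, this holds for essentially every recorded event, in stark contrast to the $23\%$–$52\%$ violation rate under VTW, thereby isolating the ``well-timed'' aspect as a structural feature of optimal (rather than rule-based) waiting.

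The main obstacle is that the turnover DP is \emph{heuristic}: as the example in Section~\ref{sec:example_labeling} shows, the forward-looking completion value can suppress optimal waiting, so the analyzed solutions might not be strict CIOSs. I would argue this only strengthens the observation: the heuristic, if anything, \emph{over-commits} to early batching and hence tends to underestimate waiting, so the genuine turnover-optimal solutions contain at most as much strategic waiting as what we measure. As a secondary sanity check, I would cross-reference the makespan-objective CIOSs (where the DP is exact and Table~\ref{tab:waiting_types_CIOS} is unconditionally trustworthy), which display the same qualitative pattern at a somewhat larger magnitude, and invoke the Kruskal--Wallis test results in the validation rows to argue representativeness for larger $n^o$.
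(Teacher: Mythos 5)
Your proposal takes essentially the same route as the paper: Observation~4 is justified empirically by the B.ext/B.fit accounting in Table~\ref{tab:waiting_types_CIOS}, which reports strategic waiting at about 1\% of the makespan (versus 10--29\% idle time) and per-occurrence durations of 12--34 seconds for the turnover CIOSs---precisely the aggregates you propose to compute. Two small caveats: the paper supports ``well-timed'' by appealing to the (near-)optimality of the CIOSs rather than by applying the $w_j\leq\chi(o_j)$ test to CIOS events (that test appears only for the VTW policy in Table~\ref{tab:waiting_times_VTW}), so your explicit verification is an addition whose outcome you are predicting rather than reading off; and your claim that the heuristic turnover DP ``tends to underestimate waiting'' is not established---the paper's own example in Section~\ref{sec:example_labeling} shows the heuristic's over-batching can \emph{introduce} waiting that the true optimum avoids, so the direction of that bias is unclear and the paper simply does not raise the issue.
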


For the \textit{makespan} objective, waiting for batch extension becomes more important than waiting for a better fit, accounting for 3\% and 2\% of the makespan, respectively. Additionally, when cart capacity is large ($c=4$), creating more opportunities for batching, strategic waiting increases further, reaching 6\% and 4\% of the makespan for both types of strategic waiting, respectively. In this case, idle time is reduced ("cannibalized") by earlier  strategic waiting (9\% idle time for $c=4$ compared to the 15\% overall average).

An interesting aspect of the \textit{makespan} objective is that the precise \textit{placement} of strategic waiting is almost irrelevant, as explained in Lemma~\ref{lemma:waitingtime_placement} (Section~\ref{sec:analytics_makespan}). 
Table~\ref{tab:waiting_at_beginning} tests the following hypothesis: Unlike the \textit{turnover} objective, where both the amount and timing of waiting are critical, for the \textit{makespan} objective, it may be sufficient to anticipate only the \textit{amount of waiting}.   By positioning this at the start and gaining additional information on incoming orders, we could potentially improve operations without needing to predict future orders in detail.  To test this, we conducted an analysis where, for each instance $I$, we enforced initial waiting of $w^*(I)$ and applied myopic reoptimization (Reopt) afterwards. We calculated $w^*(I)$ as the sum of the total idle time and the strategic waiting time in the CIOS for this instance. As shown in Table~\ref{tab:waiting_at_beginning}, the results of Reopt did not improve in most cases, except for two instances where the makespan improved by 2.4\% and 6.6\%, respectively. This indicates that the benefits of additional information from enforced initial waiting were outweighed by the cost of waiting. In other words, strategic waiting alone is insufficient and must be paired with adjustments in order batching and sequencing based on the anticipation of future orders.

\begin{table}
\caption{The impact of prior waiting for $w^*(I)$ on Reopt performance for the \textit{makespan} objective}\label{tab:waiting_at_beginning}
\centering
    \scriptsize{
\begin{tabular}{r|cc}
\toprule
  Setting ($n^o=15$) & Average improvement & Best improvement\\
\midrule
 $LargeOrders\_c2\_r200$   & 0.0\% & 0.0\% \\
  $SmallOrders\_c2\_r200$   & 0.1\% & 2.4\%  \\
  $SmallOrders\_c2\_r250$    & 0.0\% & 0.2\%  \\ 
 $SmallOrders\_c4\_r250$  & 0.3\%  & 6.6\% \\
 Overall & 0.1\% & 6.6\%  \\
\midrule
\end{tabular}
}
\end{table}


\begin{observation}[Makespan: Strategic Waiting] \label{obs:Waiting}
For \textit{makespan} objective, strategic waiting becomes more important with larger carts due to increased batching opportunities. Only the total waiting time matters, as it can be positioned at the start. However, to leverage strategic waiting, future order anticipation and adjustments in order batching and sequencing are necessary. 
\end{observation}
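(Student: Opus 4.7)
Observation~\ref{obs:Waiting} decomposes naturally into three sub-claims, each of which I would establish by a different route. I would argue them in the order below, since the easier analytical piece feeds into the harder empirical one.

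The sub-claim that only the \emph{total amount} of strategic waiting matters under the \textit{makespan} objective has an immediate analytical proof via Lemma~\ref{lemma:waitingtime_placement}. Given any instance $I$ and its CIOS with cumulative waiting plus idle time $w^*(I)$, that lemma already produces an alternative optimal schedule in which the picker is idle only during the prefix $[0,w^*(I)]$ and then executes the original batches and routes back-to-back. Hence, for a perfect-anticipation policy, the only payoff-relevant piece of information about waiting is the scalar $w^*(I)$; the precise placement of pauses inside the schedule is a free choice, which is exactly the second sentence of the observation.

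The sub-claim that strategic waiting becomes more important as the cart capacity $c$ grows is argued structurally and then verified empirically. Structurally, the marginal return of extending an active batch by one extra order is at least the per-order savings $\psi$ defined in Section~\ref{sec:CIOS_batching}, and the number of batch-extension opportunities per tour scales with $c$; consequently the expected payoff of a short wait that secures one such extension is monotone-non-decreasing in $c$, while the opportunity cost (one avoided return-to-depot leg) is bounded. Table~\ref{tab:waiting_types_CIOS} then confirms the conclusion quantitatively: moving from $c=2$ to $c=4$ raises the share of the makespan spent on strategic waiting from roughly $4$--$5\%$ to roughly $10\%$, with a matching drop in idle time, exactly the ``cannibalization'' pattern the observation alludes to.

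The last sub-claim---that strategic waiting alone does not improve the makespan and must be paired with anticipatory batching and sequencing---is where I expect the main obstacle, since it is intrinsically empirical. My plan is a controlled comparison: enforce on the myopic Reopt policy the exact waiting amount $w^*(I)$ at time zero (which, by the first sub-claim, is the best possible placement for that fixed waiting budget) and measure the resulting makespan against plain Reopt. Any remaining gap to the CIOS is then attributable solely to batching and sequencing decisions. The content of Table~\ref{tab:waiting_at_beginning}---that the augmented policy matches plain Reopt on almost every instance---then closes the argument. The obstacle is that a fully analytical version of this step would require formalizing a stochastic arrival model and proving a separation between the policy class with access only to $w^*(I)$ and the class with access to future order contents; Lemma~\ref{lemma:waitingtime_placement} by itself does not imply such a separation, so I would settle for the experimental argument above as the justification of this last piece.
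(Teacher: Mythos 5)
Your proposal follows essentially the same route as the paper: Lemma~\ref{lemma:waitingtime_placement} for the claim that only the total amount of waiting matters, the $c=4$ versus $c=2$ figures in Table~\ref{tab:waiting_types_CIOS} (together with the idle-time ``cannibalization'') for the capacity effect, and the enforced-initial-waiting experiment of Table~\ref{tab:waiting_at_beginning} for the conclusion that waiting must be paired with anticipatory batching and sequencing. The only deviations are cosmetic: your structural `marginal return' argument for the capacity effect is an extra heuristic the paper does not attempt, and your parenthetical that front-loading is ``the best possible placement'' for the fixed waiting budget slightly overstates Lemma~\ref{lemma:waitingtime_placement}, which rearranges an optimal perfect-anticipation schedule rather than guaranteeing optimal placement for an arbitrary online policy.
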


\subsubsection{Further forms of anticipation in CIOSs} \label{sec:CIOS_anticipation}
In addition to strategic waiting, CIOSs employ other forms of anticipation. The first involves anticipatory adjustments in order batching, while the second involves anticipatory adjustments in routing.

\begin{table}
\centering
\caption{Quality of batches in Reopt: Savings of two-order batches in settings with batching capacity $c=2$}\label{tab:batch_savings_Reopt}
    \scriptsize{
    \begin{tabular}{cc|cc|cc}
    \toprule
      & & \multicolumn{2}{c|}{\underline{Makespan objective} \vspace{0.5mm}}& \multicolumn{2}{c}{\underline{Turnover objective}}  \\
     & & Mean Reopt  & \# Reopt batch savings  & Mean Reopt & \# Reopt batch savings \\
       Setting & $n^o$ &  batch savings & $>$ random median& batch savings & $>$ random median\\
     \midrule
   $LargeOrders\_c2\_r200$  & 15 & 0.36 (-0.00) & 71\% (+2\%)& 0.39 (+0.02) & 79\% (+7\%)   \\ 
    $SmallOrders\_c2\_r200$   & 15 & 0.33 (-0.00) & 66\% (-2\%) & 0.40 (+0.03) & 92\% (+13\%) \\ 
      $SmallOrders\_c2\_r250$   & 15 & 0.36 (-0.01)  & 70\% (-1\%)& 0.46 (+0.06) & 87\% (+6\%) \\ 
      Overall  & 15 & 0.35 (-0.00)  & 69\% (-1\%) & 0.42 (+0.04) & 86\% (+9\%)  \\  
    \midrule
   \multicolumn{6}{l}{Note. \textit{Columns 3, 5}: Average of the mean savings of the formed 2-order batches per instance.  \textit{Columns 4,6}: Portion} \\
    \multicolumn{6}{l}{ \phantom{Note} of 2-order batches whose savings are larger than the median of the random 2-order batches of this instance, }\\
    \multicolumn{6}{l}{\phantom{Note} averaged over all instances. (In parenthesis, comparison to CIOS, see Table~\ref{tab:batch_savings})} 
    \end{tabular}
    }
\end{table}

To evaluate anticipatory batching, we compared the batching decisions in CIOSs with those in a purely myopic reoptimization policy (Reopt), by repeating the studies of Table~\ref{tab:batch_savings} for the savings of two-order batches in Reopt solutions, see Table~\ref{tab:batch_savings_Reopt}. Surprisingly, the savings of the Reopt and CIOS batches are almost equivalent, especially for the \textit{makespan} objective, where the overall mean batch savings have the equal value of 0.35, and where on average 69\% of the two-order batch savings exceeded the random median savings (compared to 70\% in CIOS). For the \textit{turnover} objective, the Reopt two-order batches 
even have relatively better savings compared to CIOS, which is also due to Reopt solutions forming \textit{fewer} two-order batches, given the lack of advanced knowledge in Reopt, but also the heuristic character of our DP for CIOS in the turnover objective, which sometimes overestimates the value of batching.

\begin{observation}[Anticipatory Batching] \label{obs:AnticipatoryBatching}
The batching quality measured in savings in Reopt solutions and CIOSs  is similar. 
\end{observation}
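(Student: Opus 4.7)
The plan is to establish Observation~\ref{obs:AnticipatoryBatching} empirically by replicating the batch-quality experiment of Table~\ref{tab:batch_savings} with Reopt in place of CIOS on the same instance set. First, I would implement Reopt as a myopic online policy that, upon every order arrival, re-optimizes batching, sequencing, and routing over the currently available, not-yet-started orders by invoking the exact DP of Section~\ref{sec:OPT_imp} on the residual sub-instance, using the picker's current position, cart state, and clock time as the initial state. Because only currently visible orders enter the sub-instance and re-invocations are triggered only by arrival events, Reopt receives no forward-looking information, which is precisely the contrast one wants against CIOS.

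Next, restricting attention to the three settings with $c=2$ (where $\psi(B)$ is naturally normalized), I would enumerate every two-order batch $B$ that Reopt actually forms, compute $\chi(o)$ for each constituent order $o$ and $\chi(B)$ for the batch via optimal picker routing, and evaluate $\psi(B)=(\sum_{o\in B}\chi(o)-\chi(B))/\chi(B)$. Aggregating per instance yields two statistics: the mean Reopt batch savings, and the fraction of Reopt two-order batches whose savings exceed the per-instance median of a random two-order batch, reusing the same random baseline as in Table~\ref{tab:batch_savings}. These populate Table~\ref{tab:batch_savings_Reopt}, and the Observation follows from an instance-paired comparison of Reopt and CIOS entries, supported by a Wilcoxon signed-rank test on the per-instance means confirming that the Reopt--CIOS differences are not systematically negative.

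The main obstacle is interpretive rather than computational: one must articulate why a myopic policy matches CIOS on batch quality. The argument is that, conditional on the pool of orders available when the picker starts a batch, Reopt already solves the within-pool matching optimally through its exact-DP subroutine, so the CIOS advantage must come from choosing \emph{when} to assemble a pool (intervention and brief strategic waits, cf.\ Observation~\ref{obs:Intervention}), not from superior matching within a pool. A secondary subtlety that has to be disclosed when stating the observation is that for the turnover objective Reopt forms relatively fewer two-order batches, preferring single-order batches, so conditioning $\psi$ on the remaining two-order batches introduces a favorable selection bias that inflates Reopt's mean slightly above CIOS, as visible in the last columns of Table~\ref{tab:batch_savings_Reopt}; this asymmetry should be flagged to avoid overstating Reopt's edge.
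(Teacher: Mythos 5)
Your proposal matches the paper's approach: the observation is established empirically by repeating the savings analysis of Table~\ref{tab:batch_savings} on the two-order batches formed by Reopt (same $c=2$ settings, same $\psi(B)$ metric, same random-batch baseline), yielding Table~\ref{tab:batch_savings_Reopt}, and you correctly flag the selection effect from Reopt forming fewer two-order batches under the turnover objective. The only minor differences are that the paper reports paired differences directly without a Wilcoxon test, and it additionally attributes part of the turnover-side discrepancy to the heuristic (batching-overestimating) nature of the turnover DP used to generate the CIOSs.
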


\begin{figure}
\begin{minipage}{0.65\textwidth}
\captionof{table}{Anticipatory routing -- strategic relocation in CIOS }\label{tab:strategicrelocation}
\centering
    \scriptsize{
\begin{tabular}{@{}crc|cc|cc@{}}
\toprule
& & & \multicolumn{2}{c}{\underline{Makespan objective}}  \vspace{0.5mm} & \multicolumn{2}{c}{\underline{Turnover objective}}\\
 & Setting & $n^o$ & Time & \# Orders &  Time & \# Orders \\
\midrule
Analysis & $LargeOrders\_c2\_r200$  & 15 & 213 (8\%) & 5.9 (39\%) & 215 (8\%) & 6.1 (41\%) \\
 & $SmallOrders\_c2\_r200$  & 15 & 381 (15\%) & 9.0 (60\%) & 386 (15\%) & 9.3 (62\%) \\
 & $SmallOrders\_c2\_r250$   & 15 & 269 (14\%) & 7.1 (47\%) & 274 (13\%) & 7.6 (50\%) \\ 
& $SmallOrders\_c4\_r250$  & 15 & 356 (19\%)  & 8.8 (58\%) & 314 (17\%) & 8.0 (53\%) \\
& Overall & 15 & 305 (14\%) & 7.7 (51\%) & 297 (13\%) & 7.7 (52\%) \\
\midrule
Validation & $SmallOrders\_c2\_r200$  & 15 & 15\% & 60\% & &\\
 & $SmallOrders\_c2\_r200$  & 18 & 15\% & 54\% & &\\
 & $SmallOrders\_c2\_r200$   & 21 & 16\% & 60\% & &\\ 
&  \multicolumn{2}{r|}{Kruskal Wallis test p-values} & 0.72 & 0.77 & & \\
\midrule
\multicolumn{7}{l}{Note. Average time picker spends with strategic relocation per instance, and average number}\\
\multicolumn{7}{l}{ \phantom{Notes.} of orders with strategic relocation before first pick}
\end{tabular}
}
\end{minipage}
\hfill
\begin{minipage}{0.3\textwidth}
\includegraphics[scale=0.35]{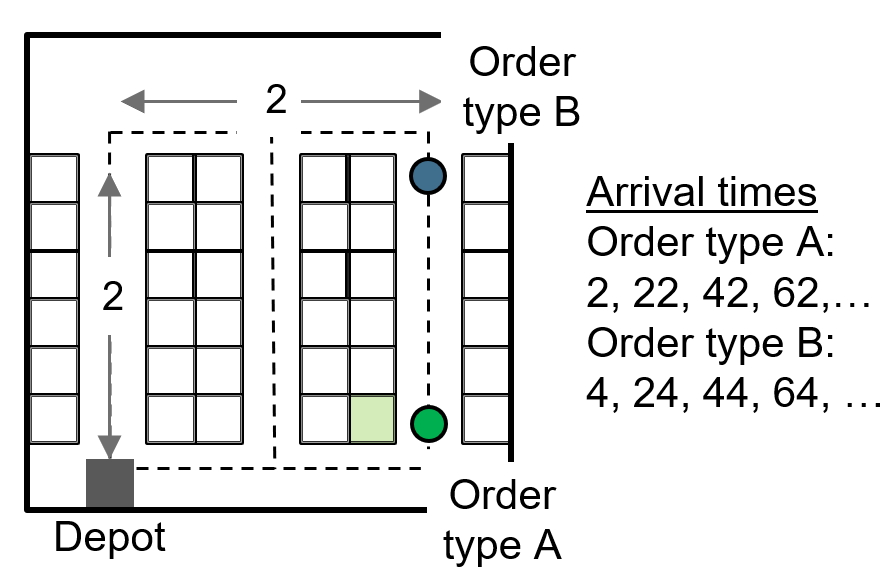}
\captionof{figure}{Example of strategic relocation for the \textit{turnover} objective}
\scriptsize{Consider $c=2$ and unit speed. Average turnover in CIOS is 10 (e.g. by picking pairs of A- and B-orders in batches). For any policy without strategic relocation, the average turnover cannot be better than 14.}
\end{minipage}
\end{figure}

While CIOSs optimize the picker routing for a given batch, there is a notable difference from common warehousing heuristics.  In CIOSs, the picker uses idle time to move towards the next picking location for an upcoming order -- a practice we term \textit{strategic relocation}. This occurs before the first pick for 39-60\% of orders in the \textit{makespan} objective and 41-62\% of orders in the \textit{turnover} objective (see Table~\ref{tab:strategicrelocation}), accounting for 8-19\% of the total completion time in both objectives. 

For the \textit{makespan} objective, time savings from strategic relocation are constrained by the warehouse's dimensions (the time to traverse the warehouse $(W+L)$) even for instances with many orders, as discussed in Lemma~\ref{lemma:strategic_reloc}. For the \textit{turnover} objective, strategic relocation can significantly improve the turnover time of \textit{every} individual order, leading to a noticeable enhancement of the overall objective (see example in Figure~\ref{fig:strategic_relocation}).

\begin{observation}[Anticipatory Routing] \label{obs:AnticipatoryBatching}
CIOSs make extensive use of strategic relocation, where the picker moves to the next order's picking location during idle time.
\end{observation}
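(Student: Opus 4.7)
The plan is to substantiate the observation empirically, since it is a data-driven claim about the CIOS dataset rather than an analytical inequality. First, I would give a precise operational definition of strategic relocation: for a CIOS with schedule $C(\sigma,s)$, a relocation episode precedes the first pick of order $o_j$ whenever, at some instant before $r_j$, the picker is not currently en route between consecutive picks of a scheduled item and instead moves from her last position (either $l_d$ after a batch-completion state, or the last picking location $s_k$ in $\Theta_k$) toward the picking location of the next scheduled item $s_{k+1}\in o_j$, arriving no later than $\max\{r_j,C(\sigma,s_k)\}+d(s_k,s_{k+1})/v$. Without strategic relocation, this motion would not have started until $r_j$. The length of the episode is the difference between the departure time and the start of productive travel under the no-relocation counterpart.

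Second, I would instrument the DP output. Each optimal path through the state graph from Section~\ref{sec:OPT_imp} already records the sequence $(\Theta_0,\Theta_1,\ldots,\Theta_{n^i+1})$ with completion times. For each transition $(\Theta_k,x_k)$ whose immediate cost $g^{\text{cost}}(\Theta_k,x_k)$ includes a waiting component (i.e.\ the release time of $s_{k+1}$ exceeds $C(\sigma,s_k)+d(s_k,s_{k+1})/v$ or $d(l_d,s_{k+1})/v$ in the batch-completion case), I would split this waiting into (i) pure idleness at the last position and (ii) anticipatory travel toward $s_{k+1}$. Since an optimal schedule collects each item at the earliest possible time after release (see Section~\ref{sec:obj_fun}), the picker always chooses the latter option whenever feasible, so strategic relocation is unambiguously identifiable as the segment $[\max\{r(s_{k+1})-d(\cdot,s_{k+1})/v,C(\sigma,s_k)\},r(s_{k+1})]$.

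Third, I would aggregate per instance two metrics: the total strategic-relocation time (normalized by the makespan) and the number of orders whose first pick is preceded by a relocation episode (normalized by $n^o$). Averaging across the 20 instances in each of the four settings and both objectives produces the entries of Table~\ref{tab:strategicrelocation}. The ranges 8\%-19\% and 39\%-62\% reported there justify "extensive use". To ensure the phenomenon is not an artifact of the moderate instance size $n^o=15$, I would run the Kruskal-Wallis test across $n^o\in\{15,18,21\}$ on the \textit{Basis} setting; the large p-values (0.72 and 0.77) reported in the validation block confirm that the shares do not shift with instance size.

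The main obstacle is disentangling strategic relocation from ordinary idle walking when the CIOS contains combined patterns (for example a batch-completion transition followed by relocation that crosses through the depot, or relocation interleaved with strategic waiting as defined in Section~\ref{sec:CIOS_strategic_waiting}). I would adopt the convention that relocation time is only counted up to the release of the target item, with any additional stationary wait at $s_{k+1}$ attributed to the B.fit category; this keeps the relocation and waiting statistics mutually disjoint and additive against the makespan, so that the percentages in Table~\ref{tab:strategicrelocation} and Table~\ref{tab:waiting_types_CIOS} can be interpreted consistently.
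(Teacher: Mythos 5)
Your proposal is correct and follows essentially the same route as the paper: Observation~5 is supported empirically by measuring, in each CIOS, the time spent moving toward a not-yet-released order's picking location and the number of orders preceded by such an episode, aggregating these into the per-setting averages of Table~\ref{tab:strategicrelocation} (8--19\% of completion time, 39--62\% of orders), and confirming stability across $n^o\in\{15,18,21\}$ with the Kruskal--Wallis test. Your additional care in keeping relocation time disjoint from the B.ext/B.fit waiting categories matches the paper's implicit accounting, so the two arguments coincide in substance.
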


\subsection{Discussion: Design of good online policies} ~\label{sec:good_online_algs}
In this section, we build upon the analysis from Section~\ref{sec:CIOS_analysis} to design effective online policies and estimate the potential for advanced anticipatory techniques.

Starting with the well-known \textit{Variable Time Window Batching (VTWB)} policy, which was extensively studied in the literature \citep{Vannieuwenhuyse2009, GilBorras2024}, we progressively refine its algorithmic elements based on the insights from the previous sections (see Figure~\ref{fig:avg_gaps_small} and Table~\ref{tab:gaps}). VTWB is a non-interventionist policy that employs simple FIFO-batching, S-shape routing, and waits at the depot until at least $N=2$ orders arrive (note that the value of $N=2$ equals the optimal amount of waiting in the examined warehouse based on the queuing-theoretical formulas of \citet{LeDuc2007} applied to our \textit{Basis} setting).  First, we replace the S-shape routing with \textit{optimal routing}  (cf. Section~\ref{sec:CIOS_routing}), then introduce \textit{optimal batching} in place of FIFO batching  (cf. Section~\ref{sec:CIOS_batching}). We further incorporate intervention, resulting in a myopic reoptimization policy with intervention \citep[cf.][]{Lorenza, Dauod2022, Giannikas2017}.  Given our findings on the high opportunity cost of strategic waiting and its dependence on anticipatory batching (Section~\ref{sec:CIOS_strategic_waiting}), we \textit{eliminate waiting}.

For the \textit{makespan} objective, we add a \textit{batch selection rule}, prioritizing batches with the largest number of orders and, in case of ties, those offering the highest savings (cf. Section~\ref{sec:CIOS_anticipation}). No such rule is applied for the \textit{turnover} objective, as reoptimization inherently selects batches to minimize additive order completion times. Finally, we allow the picker to move to the warehouse center during idle time, inspired by the concept of \textit{strategic relocation} (cf. Section~\ref{sec:CIOS_anticipation}). The resulting policy is termed \textit{Reopt*}. 

We evaluate the impact of of each algorithmic element by calculating the \textit{gap to the perfect anticipation result}. The gap of an algorithm ALG on instance $I$ compares $z^{\text{ALG}}(I)$ (the objective value achieved by ALG) to $z^*(I)$ (the optimal CIOS objective value) and is defined as:
\begin{align}
    \text{gap}(\text{ALG},I)=\frac{z^{\text{ALG}}(I)-z^*(I)}{z^*(I)}.
\end{align}
Observe that for \textit{turnover} objective, we compare the gaps of ALG’s average turnover time $z^{\text{turnover; ALG}}(I)$ to the \textit{upper bound} of $z^*(I)$, computed using the heuristic DP approach from Section~\ref{sec:imp_details}.

\begin{figure}
\centering
    \begin{subfigure}[b]{0.49\textwidth}
         \includegraphics[width=0.99\linewidth]{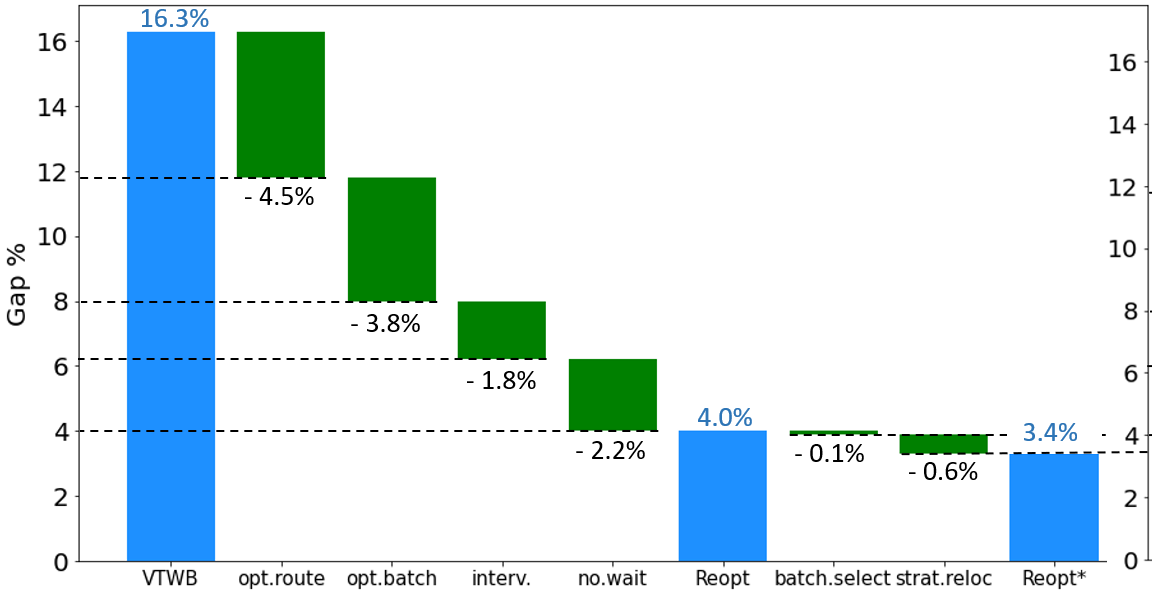}
    \label{fig:avg_gaps_small_makespan}
    \caption*{(a) Makespan minimization}
    \end{subfigure}
    ~ 
    \begin{subfigure}[b]{0.49\textwidth}
        \includegraphics[width=0.99\textwidth]{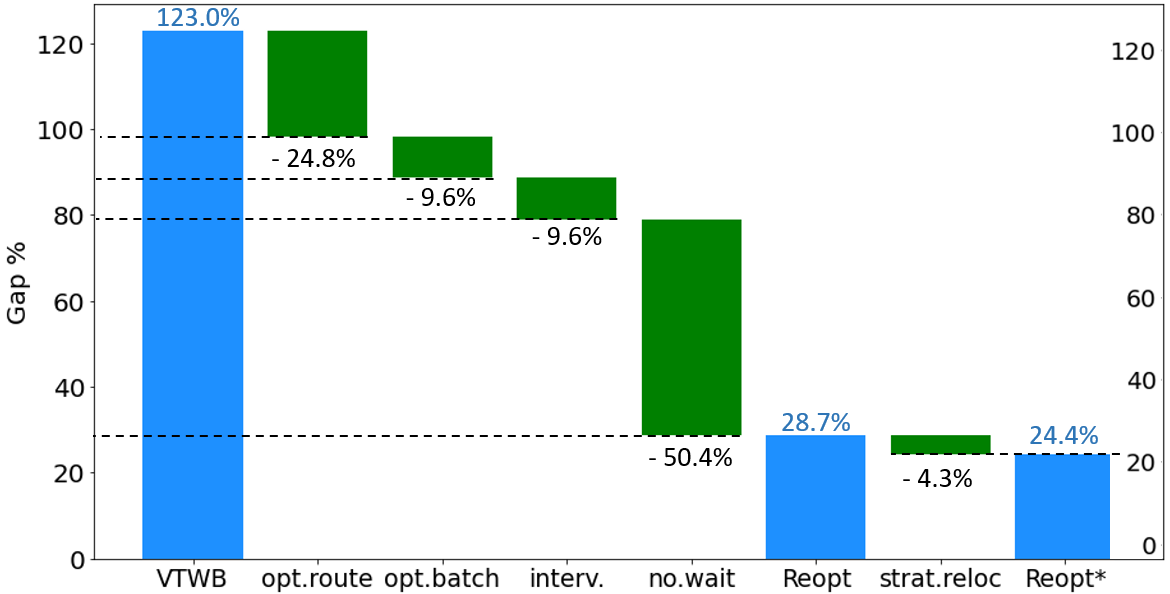}
        \caption*{(b) Average turnover minimization}
        \label{fig:avg_gaps_small_turnover}
    \end{subfigure}
    \caption{Development of the \textit{average} gap to the perfect anticipation result across various policies} \label{fig:avg_gaps_small}
\end{figure}

\begin{table}
\caption{Development of average- (worst-)  gap (\%) to the perfect anticipation result of online policies with algorithmic elements} \label{tab:gaps}
\centering
\scriptsize{
\begin{tabular}{@{}l@{}r|r|rrrrrr|r}
\toprule
& & & \multicolumn{6}{c}{Effect of algorithmic elements} &  \\
Obj. & \ Setting ($n^o=15$) & VTWB & opt.route & opt.batch & interv. & no.wait & batch.select  & strat.reloc & Reopt* \\
\midrule
\multirow{4}{*}{\rotatebox[origin=c]{90}{\makecell{Makesp. }}} & $LargeOrders\_c2\_r200$  & 19.9 (34.4) & -9.5 (-15.1) & -2.9 (-5.1) & -1.0 (+1.3) & -3.0 (-8.0) & -0.5 (-1.6) & 0.0 (0.0) & 3.0 (6.0) \\ 
& $SmallOrders\_c2\_r200$  & 10.9 (23.5) & -2.1 (-4.6) & -2.6 (-5.4) & -0.2 (-3.1)  & -2.8 (-1.3) & +0.2 (-0.1) & -0.5 (-1.0) & 2.7 (8.0)   \\ 
 & $SmallOrders\_c2\_r250$  & 12.4 (28.4) & -2.6 (-10.5) & -1.4 (-1.4) & -1.1 (+0.1) & -2.7 (-5.9) & 0.0 (-1.9) & -0.7 (0.0) & 3.9 (8.8) \\ 
& $SmallOrders\_c4\_r250$  & 22.1 (65.5) & -3.9 (-12.6) & -8.3 (-34.5) & -4.7 (-8.2) & -0.4 (+3.4) & 0.0 (-1.2)& -0.9 (0.0) & 3.9 (12.5) \\ 
    \midrule
\multirow{4}{*}{\rotatebox[origin=c]{90}{\makecell{Turnover }}} & $LargeOrders\_c2\_r200$  & 113.2 (151.5) & -42.7 (-27.9) & -5.1 (-17.5) & -8.7 (-9.5) & -34.2 (-56.9) & -- (--) & -3.2 (-10.6) & 19.2 (29.1) \\ 
& $SmallOrders\_c2\_r200$  & 147.6 (278.4) & -21.5 (-10.3) & -3.6 (-21.2) & -8.3 (-4.5) & -79.9 (-187.6) & -- (--) & -8.2 (-14.1) & 26.0 (40.5)  \\ 
& $SmallOrders\_c2\_r250$  & 107.9 (180.3) & -17.2 (-0.8)& -7.1 (-5.0) & -9.2 (+0.1) & -45.2 (-122.1) & -- (--) & -3.2 (-4.1) & 26.0 (48.8) \\ 
& $SmallOrders\_c4\_r250$  & 123.4 (181.8) & -17.8 (-26.0) & -22.5 (-10.6) & -12.5 (-4.9) & -42.2 (-85.9) & -- (--) & -2.5 (-1.3) & 26.4 (53.0)  \\ 
    \midrule    
\end{tabular}
}
\end{table}

The introduced algorithmic elements led to significant improvements in the basic VTWB policy. On average, gaps were reduced from 16.3\% to 3.4\% for the \textit{makespan} objective and from 123.0\% to 24.4\% for the \textit{turnover} objective across all settings (see Figure~\ref{fig:avg_gaps_small}).

While the positive effects of optimal routing and batching have been well-documented in the literature, the benefits of \textit{no-wait}, \textit{intervention}, and \textit{strategic relocation} have largely gone unnoticed \citep[cf.][]{Pardo2023}. Additionally, for the \textit{makespan} objective, tailored batch selection offers moderate improvements (0.1\% on average and up to 2\% in specific cases), especially for higher order arrival rates and large orders. 
Observe that the positive effect of intervention is stable across a large range of policies, see Appendix~\ref{sec:intervention}.

\subsection{Discussion: Quantifying the potential of advanced anticipation techniques} ~\label{sec:anticipationQ}
The last column of Table~\ref{tab:gaps} highlights the potential benefits of advanced anticipation, comparing the enhanced myopic reoptimization algorithm Reopt* with the perfect anticipation algorithm. Advanced anticipation could reduce the average \textit{makespan} by 2.7–3.9\%, with higher gains for larger order arrival rates and larger orders, where the potential for greater savings from batching is higher. 
For the \textit{turnover} objective, relative gains from advanced anticipation are more substantial, ranging from 19.2\% to 26.4\%, particularly for small orders. However, notice the comparatively small denominator (average order turnover) for these improvements. 
Overall, as outlined in Section~\ref{sec:good_online_algs}, simple adjustments can lead to significant improvements in basic myopic heuristics, even without the use of anticipation techniques.

\section{Conclusion} \label{sec:conclusion}
This paper investigates perfect-information policies for warehouse picking operations with dynamically arriving orders. The goal is to identify decision patterns that reduce order-picking costs and average order turnover, which can be integrated into practical warehousing policies. Additionally, the study uses perfect-information policies to assess the potential benefits of advanced anticipation techniques, such as contextual information and machine learning approaches. 

To achieve this, we developed a tailored dynamic programming algorithm (DP) for the \textit{Order Batching, Sequencing, and Routing Problem with release times (OBSRP-R)}, representing perfect-information operations in picker-to-parts warehouses. For the \textit{makespan} objective, which focuses on the picker’s time and associated wage costs, the DP is exact, making it the first exact solution approach for OBSRP-R in the literature. For the \textit{turnover} objective, the DP serves as an efficient heuristic and becomes exact if no waiting times are involved. Commercial solvers were incapable of solving even small instances optimally, even with extensive computational resources for parallelization, highlighting the necessity of our algorithms.

Our findings confirm the importance of using optimal batching and routing strategies over simpler heuristic rules like first-come-first-serve (FIFO) or S-shape routing. More importantly, our analysis generates actionable insights on how to improve both the makespan and the turnover objectives \textit{simultaneously}.

\textit{Insight 1. Implement intervention.} Intervention involves dynamically adjusting picking plans while the picker is `in the field', such as adding a new order to the cart or replacing an order whose items have not yet been picked with another. Although it requires investment in data and communication systems for real-time updates of the picker’s instructions, these adjustments can yield operational benefits. Intervention fully utilizes incoming order information reducing costs and speeding up operations.   Despite being highlighted by \citet{Giannikas2017} for its benefits, intervention has received little attention, notwithstanding its consistent positive impact across various heuristic approaches (see Section~\ref{sec:good_online_algs}).

\textit{Insight 2. Eliminate intentional waiting.} 
While brief periods of strategic waiting have shown some positive effects in FIFO policies \citep{LeDuc2007, Vannieuwenhuyse2009, Bukchin2012}, attempts to incorporate it into more advanced planning heuristics have largely failed \citep[cf.][]{GilBorras2024, Henn2012}. In fact, when intervention is allowed, strategic waiting becomes detrimental to both the makespan and the average order turnover. For the \textit{turnover} objective, eliminating strategic waiting yields the most significant improvement, cutting the average optimality gap by 50\% across all instances, with reductions as high as 188\% in one case. 
Our analysis in Section~\ref{sec:CIOS_strategic_waiting} explains these findings, showing that strategic waiting is effective only when it is brief, well-timed, and based on accurate order anticipation. This makes it challenging to integrate into non-anticipatory online policies. Ultimately, strategic waiting resembles an "all-in" gamble, with high opportunity costs if the waiting period is too long.

\textit{Insight 3. Relocate the picker to the 'gravity center' of anticipated future orders during idle time.} In practice, space constraints and proximity to sorting and packaging areas often prevent the depot from being centrally located, unlike in idealized warehouse designs. Allowing the picker to move to the 'order gravity center' during idle time can help offset this limitation. Similar strategies have been observed in other dynamic routing problems in the literature \citep{Bertsimas1993}.


\textit{Insight 4. Advanced anticipation offers limited leverage in picking operations -- focus data science efforts on upstream planning.}
Significant improvements in warehouse picking can be achieved using classic optimization methods, with only modest gains left for advanced anticipation, as shown by the perfect-information benchmark. For example, classic optimization in Figure~\ref{fig:avg_gaps_small} can improve the makespan (i.e., picker's time and wages) by about 13 percentage points, leaving a smaller 3-4\% improvement potential from advanced anticipation. Additionally, as discussed in Section~\ref{sec:CIOS_analysis}, the margin for error in anticipation is likely small in picking operations. Higher-impact decisions with more tolerance for anticipation errors are found in upstream areas like daily staffing, inventory levels, item repositioning, and zoning.

Perfect-information benchmarks offer useful insights on the limits and promises of advanced anticipation techniques. Future research is needed to investigate anticipation potential across further decisions in e-commerce warehousing. Future studies may improve the scalability of online algorithms, especially those that allow for time-consuming optimal batching and routing. 
Additionally, integrating order picking and delivery into a single optimization framework could yield further efficiencies.

\section*{Acknowledgments}\label{sec:acknowledgements}
We would like to extend our gratitude to the Government of Quebec (Ministère des Relations internationales et de la Francophonie) and BayFor for their support. We also acknowledge Compute Canada for providing the computational resources necessary for this research. 

 \let\oldbibliography\thebibliography
 \renewcommand{\thebibliography}[1]{%
    \oldbibliography{#1}%
    \baselineskip14pt 
    \setlength{\itemsep}{5pt}
 }
\bibliography{AGV_picking}
\newpage

\section*{Appendix}

\subsection{Mixed-integer linear programming formulations for OBSRP-R}\label{sec:MIPs}
In this section, we present \textit{mixed-integer programming (MIP)} formulations for OBSRP-R with the makespan (Section~\ref{sec:MIP_cost}) and turnover (Section~\ref{sec:MIP_turnover}) objective.

In addition to the previous notation summarized in Table~\ref{tab:notation}, we introduce in the MIP formulations a $n^i\times n^o$ \textit{adjacency matrix} $A=(a_{sj})_{s\in S, j \in [n^o]}$ for which each entry $a_{sj} \in A$ takes the value 1 if item $s$ belongs to order $o_j$, and the value 0, else.  
Let $K$ denote an upper bound for the number of batches in an optimal solution. 

For both objectives, we use a three-index formulation 
for the order batching, sequencing, and picker routing problem without release times.  

We use the following decision variables:

\begin{itemize}
\item For each batch index  $k\in [K]$, and each pair of locations $i\in S\cup \{l_d\}, l \in S \cup \{l_d\}, i \neq l$, in the of set picking locations or the depot, the binary variables $y_{kil}  $ indicate if location $l$ is visited within the $k^{\text{th}}$ batch, and if it is visited directly after location $i$ in this batch (then, $y_{kil}=1$), or not (then, $y_{kil}=0$).
\item For each batch index  $k\in [K]$ and every $j\in[n^o]$, the binary variable  $x_{kj} $ indicates if order $o_j$ is in the $k^{\text{th}}$ batch (then, $x_{kj}=1$), or not (then, $x_{kj}=0$). 
\item For each item $s\in S$, the continuous variable $t_{s}$ represents the completion time $C(s)$ of item $s$.   
\item For each batch index $k\in [K+1]$, the continuous variable $t_{l_d}^k$ represents the time the picker leaves the depot $l_d$ to start the $k^{\text{th}}$ batch.  
\end{itemize}

\subsubsection{MIP formulation for the makespan objective}\label{sec:MIP_cost}

For the makespan objective, we can use a customized upper bound $K$ for the number of batches, which is given by Proposition~\ref{prop:bound_for_nbr_batches}.
\begin{proposition}\label{prop:bound_for_nbr_batches}
For every instance of OBSRP-R with the objective of minimizing the makespan, 
there exists an optimal solution that forms at most $K$ batches, with
\begin{align}
K:=\lfloor n^o \cdot \frac{2}{c+1}\rfloor \label{eq:K_UB_batches}
\end{align}
\end{proposition}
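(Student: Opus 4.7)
The claim has the flavor of a pigeonhole argument paired with a local improvement (merging) step. My plan is to start from an arbitrary optimal solution and repeatedly merge pairs of batches whose sizes jointly fit into the cart, arguing that each merge leaves the makespan unchanged or improves it. Once no further merge is possible, a counting argument will give the bound $K$.

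\textbf{Step 1 (Merging lemma).} I would first prove the following auxiliary statement: if a feasible solution $\sigma$ has two batches $B_a$ and $B_b$ with $|B_a|+|B_b|\le c$, then there exists a feasible solution $\sigma'$ with one fewer batch and $z^{\text{makespan}}(\sigma')\le z^{\text{makespan}}(\sigma)$. To construct $\sigma'$, I would keep all batches other than $B_a$ and $B_b$ in their positions and replace the later of the two (say $B_b$) by the merged batch $B_a\cup B_b$, while deleting $B_a$ from its original slot. For the time analysis I would decompose the makespan as walking time $+$ picking time $+$ (idle/waiting time). Picking time is unchanged. For walking, by the triangle inequality on $d(\cdot,\cdot)$, replacing the two depot round-trips $(l_d\!\to\!\text{first}(B_a)\!\to\!\cdots\!\to\!\text{last}(B_a)\!\to\!l_d)$ and $(l_d\!\to\!\text{first}(B_b)\!\to\!\cdots\!\to\!\text{last}(B_b)\!\to\!l_d)$ by a single combined tour with an optimal routing of $B_a\cup B_b$ strictly saves at least $d(\text{last}(B_a),l_d)+d(l_d,\text{first}(B_b))-d(\text{last}(B_a),\text{first}(B_b))\ge 0$. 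For the waiting part, I would invoke Lemma~\ref{lemma:waitingtime_placement} (the waiting-consolidation lemma), which lets us shift all waiting/idleness to the beginning of the schedule: the total ``busy'' time in $\sigma'$ is no larger than in $\sigma$, so the waiting time consolidated at the start can only shrink, and every release-time constraint is respected because items of $B_a$ are released strictly before the start of $B_b$ in $\sigma$ and $\sigma'$ only delays their pickup.

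\textbf{Step 2 (Pigeonhole on a merge-maximal solution).} Starting from any optimal solution, I would apply the merging lemma repeatedly until no two batches can be merged; by Step 1 the result is still optimal and has some number $K'$ of batches with sizes $s_1\le s_2\le\dots\le s_{K'}$ satisfying $s_i+s_j\ge c+1$ for all $i\ne j$. From $s_1+s_2\ge c+1$ and $s_1\le s_2$ I get $s_i\ge (c+1)/2$ for all $i\ge 2$. A short case analysis (Case A: $s_1\ge(c+1)/2$; Case B: $s_1<(c+1)/2$, where $s_i\ge c+1-s_1$ for $i\ge 2$) then yields
\begin{equation*}
n^o \;=\; \sum_{i=1}^{K'} s_i \;\ge\; K'\cdot\frac{c+1}{2},
\end{equation*}
so $K'\le 2n^o/(c+1)$, and since $K'$ is an integer, $K'\le \lfloor 2n^o/(c+1)\rfloor=K$.

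\textbf{Main obstacle.} The routine part is the counting in Step 2; the delicate part is Step 1, specifically showing that merging two non-adjacent batches cannot worsen the makespan in the presence of release times. My key lever there is Lemma~\ref{lemma:waitingtime_placement}: by consolidating idle/waiting time at the start, the makespan becomes a pure function of walking time plus picking time plus a single initial waiting block whose length equals $z^{\ast}(I)-z^{\ast}(I^{r=0})$. Since merging strictly decreases the walking total (by triangle inequality) and leaves picking unchanged, the consolidated waiting block can only contract, which closes the argument.
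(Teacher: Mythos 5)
Your overall template (merge batches that jointly fit into the cart, then count) is the same as the paper's, but your Step 1 is stated for an \emph{arbitrary} pair of batches, and in that generality the merging lemma is false once release times are present. The paper merges only two \emph{consecutive} batches $\hat B_l,\hat B_{l+1}$, keeping the item sequence unchanged and merely skipping the depot detour between them; then the picker reaches every item no later than before, so no completion time can increase. Your version instead deletes $B_a$ from its early slot and appends its items to $B_b$'s slot. Deleting $B_a$ buys nothing if the intermediate batches are pinned by their release times, while the enlarged batch $B_a\cup B_b$ lengthens the tail of the schedule. Concretely: take $c=2$, $t^p=0$, $v=1$, three single-item orders with $d(l_d,s_1)=d(l_d,s_2)=d(l_d,s_3)=5$, $d(s_1,s_3)=2$, and $r_1=0$, $r_2=100$, $r_3=110$. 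Picking $\{o_1\},\{o_2\},\{o_3\}$ as separate batches in that order gives makespan $115$ (pick $o_1$ by time $10$, complete $o_2$ at $105$, complete $o_3$ at $115$), whereas every schedule that batches $o_1$ with $o_3$ has makespan at least $117$: if $\{o_1,o_3\}$ is picked second it cannot start before $105$ and needs $12$ more time units; if it is picked first it cannot return before $115$, pushing $o_2$ to $125$. So merging the non-consecutive pair $B_a=\{o_1\}$, $B_b=\{o_3\}$ strictly worsens the optimum, and your lemma fails.

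The step that actually breaks is the waiting analysis. Your triangle-inequality computation correctly shows that total walking does not increase, but the makespan is not ``walking $+$ picking $+$ a waiting block determined by those totals'': waiting depends on \emph{when} each item is scheduled relative to its release time. Lemma~\ref{lemma:waitingtime_placement} only says that a \emph{fixed} solution's idle time can be consolidated at the start of that same solution; it does not say the consolidated block shrinks when the busy time shrinks, so the inference ``busy time is no larger, hence the waiting block can only contract'' is a non sequitur. The repair is exactly the paper's route: restrict the merge to consecutive batches (where no completion time increases, so an optimal solution with the fewest batches has $|\hat B_l|+|\hat B_{l+1}|\ge c+1$ for every \emph{adjacent} pair), and then replace your sorted-size pigeonhole --- which needs $s_i+s_j\ge c+1$ for \emph{all} pairs and is therefore unavailable (adjacency gives no control over, say, sizes $1,c,1,c,1$) --- by the weaker but sufficient count that each of the $\lfloor f/2\rfloor$ disjoint pairs of consecutive batches contains at least $c+1$ orders.
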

\textit{Proof}. 
Consider a solution $\hat{\sigma}$ with $\pi^{\text{batches}}(\hat{\sigma})=(\hat{B}_1,...,\hat{B}_l, \hat{B}_{l+1},...,\hat{B}_f)$, such that for two \textit{consecutive} batches $\vert \hat{B}_l \vert + \vert \hat{B}_{l+1} \vert \leq c $. Then $\hat{\sigma}$ is weakly dominated by the solution $\tilde{\sigma}$ that conserves the visiting sequence of items $\pi(\hat{\sigma})$, and the sequence of batches $\pi^{\text{batches}}(\hat{\sigma})$, except from replacing $\hat{B}_l, \hat{B}_{l+1}$ by a single batch $\tilde{B}$, with $\pi^{\tilde{B}}=(\pi^{\hat{B}_l},\pi^{\hat{B}_{l+1}})$. This fact follows from the triangle inequality of the considered distance metric: transitioning directly from the last item of $\pi^{\hat{B}_l}$ to the first item of $\pi^{\hat{B}_{l+1}}$ is at least as fast as taking the detour through the depot. Thus, even with release times for the orders, at most the waiting time for one of the items in ${B}_{l+1}$ may increase, which has no consequences on the solution's completion time, and the overall makespan of the solution can not deteriorate.   

By definition, an optimal solution with the smallest number of batches $\tilde{K}$ cannot be weakly dominated by a solution with fewer batches, and from the above, we conclude that each pair of consecutive batches in such a solution may count together at least $c+1$ orders. In case $2n^o \mod (c+1)=0$, every solution having this property has at most as many batches as a solution that partitions the $n^o$ orders into pairs of batches with \textit{exactly}  $c+1$ orders. There are $\frac{n^o}{c+1}$ such pairs of batches which can be formed, thus $\frac{2\cdot n^o}{c+1}$ individual batches. In the case where $2n^o \mod (c+1)>0$, there must be even one pair of consecutive batches with more than $c+1$ orders, and the maximum amount of formed batches is $K$ as in (\ref{eq:K_UB_batches}).
\qed

We propose the following MIP formulation:

\begingroup
\allowdisplaybreaks
\begin{align}
\text{Minimize} \max_{k\in [K+1]} \{t_{l_d}^{k} \}   \label{eq:MIP_obj} \\
\text{s.t.} & \nonumber \\
\sum\limits_{k=1}^K \sum\limits_{i \in S \cup \{l_d\} ,i\neq l}  y_{kis}=1 \qquad & \forall s \in S  \label{eq:MIP_flow1}\\
 \sum\limits_{i \in S }  y_{kil_d}\leq1 \qquad & \forall k \in [K]  \label{eq:MIP_flow1_depot}\\
    \sum\limits_{i \in S \cup \{l_d\}, i \neq s} y_{kis} \leq \sum\limits_{i \in S \cup \{l_d\}, i \neq l} y_{ksi} \qquad & \forall s \in S, \forall k \in [K] \label{eq:MIP_flow2} \\
     \sum\limits_{k =1 }^K\sum\limits_{s \in S } y_{ksl_d} =  \sum\limits_{k =1 }^K\sum\limits_{s \in S } y_{kl_ds} \qquad &  \label{eq:MIP_flow2_depot} \\ 
   \sum\limits_{s \in S }y_{kl_ds}\leq \sum\limits_{s \in S }y_{(k-1)l_ds}  \qquad & \forall k \in \{2,3,...K\} \label{eq:symmetrie_breaking}   \\
x_{kj} \geq a_{sj} \sum\limits_{i \in S \cup \{l_d \}} y_{kis} \qquad & \forall j \in [n^o], \forall k \in [K], \forall s \in S \label{eq:MIP_orderbatch}\\   
\sum\limits_{k=1}^K x_{kj} \leq 1 \qquad & \forall j \in [n^o] \label{eq:MIP_ordercover} \\
\sum\limits_{j =1}^{n^o} x_{kj} \leq c \qquad & \forall k \in [K] \label{eq:MIP_batchsize} \\
t_s \geq t_i -  M(1-\sum\limits_{k=1}^K y_{kis}) + \frac{1}{v} \cdot d(i,s)+  t^{p} \qquad & \forall s \in S, \forall i \in S, s \neq i \label{eq:MIP_MTZ1} \\
   t_s \geq t_{l_d}^k -  M(1- y_{kl_ds}) + \frac{1}{v} \cdot d(l_d,s)+  t^{p} \qquad & \forall s \in S, \forall k \in [K] \label{eq:MIP_MTZ2} \\
   t_{l_d}^k \geq t_s -  M(1- y_{(k-1)sl_d}) + \frac{1}{v} \cdot d(s,l_d) \qquad & \forall s \in S, \forall k \in \{2,3,...,K+1\} \label{eq:MIP_MTZ3} \\
   t_s  \geq r_j \cdot a_{sj} + t^{p} \qquad & \forall s \in S , \forall j \in [n^o] \label{eq:MIP_release_times}\\ 
y_{kil} \in \{0,1\}  \qquad & \forall k \in [K], \forall i,l \in S \cup \{l_d\}, i \neq l \label{eq:MIP_y}\\
x_{kj} \in \{0,1\} \qquad & \forall  k \in [K], \forall j \in [n^o] 
\label{eq:MIP_x}\\
t_i \geq 0 \qquad & \forall i \in S \label{eq:MIP_t}\\
  t_{l_d}^k \geq 0 \qquad & \forall k \in [K+1]\label{eq:MIP_t2} 
\end{align}
\endgroup
The objective function (\ref{eq:MIP_obj}) minimizes the total completion time. 
The constraints (\ref{eq:MIP_flow1}) - (\ref{eq:MIP_flow2_depot}) are flow constraints, which also ensure that every picking location is visited. Constraints (\ref{eq:symmetrie_breaking}) are symmetry-breaking constraints.
Constraints (\ref{eq:MIP_orderbatch}) and (\ref{eq:MIP_ordercover}) assign orders to batches, such that all items that belong to the same order are placed in the same batch. Constraints (\ref{eq:MIP_batchsize}) define the batch capacities. Constraints (\ref{eq:MIP_MTZ1})-(\ref{eq:MIP_MTZ3}) are \textit{Miller-Trucker-Zemlin (MTZ)} subtour elimination constraints and at the same time define the completion time of the items. 
The following large parameter $M$ is used within these constraints:
\begin{align}
    M:=r_{n^o}+\frac{1}{v}\cdot (2\cdot L + (a+1)\cdot W) \cdot \lceil \frac{n^o}{c} \rceil + n^i \cdot t^p + \frac{1}{v} \cdot (L+W) \label{eq:Big_M_def}
\end{align}
as it is an upper bound for the expressions $t_i+\frac{1}{v} d(i,s)+t^p$ and $t_{l_d}^k+\frac{1}{v} d(l_d,s)+t^p$ for all $s \in S\cup\{l_d\}$ and thus guarantees that the right-hand side of the constraints (\ref{eq:MIP_MTZ1})-(\ref{eq:MIP_MTZ3}) are non-positive in case $M$ is multiplied by 1. To see this, note that the problem turns into a static problem after the arrival of the last order $r_{n^o}$, whose optimal completion time is bounded from above by the time to pick $\lceil \frac{n^o}{c} \rceil $ arbitrarily formed batches by traversing the warehouse completely in an S-shape motion of at most $ \frac{1}{v} \cdot (2\cdot L + (a+1)\cdot W) $ time, see also Lemma 1 in \citet{Lorenza}. Thus the first two summands of $M$ form an upper bound for $t_i$ and $t_{l_d}^k$ in any optimal solution, and similarly, the last summand of M bounds from above the traversal time $\frac{1}{v} d(i,s)$ for all $ i,s \in S\cup\{l_d\}$.  
Constraints (\ref{eq:MIP_release_times}) make sure that the release times are respected and (\ref{eq:MIP_y})-(\ref{eq:MIP_t2}) define the nature of the variables. \\

\subsubsection{MIP formulation for the turnover objective}\label{sec:MIP_turnover}
 In the case of the turnover objective, one additional group of variables is needed:
\begin{itemize}
    \item For each order $o_j, j \in [n^o]$, the continuous variable $t^o_{j}$ represents the completion time $C(o_j)$ of order $o_j$.  
\end{itemize}
Furthermore, the upper bound on the number of batches $K$ from Proposition~\ref{prop:bound_for_nbr_batches} does not hold in this case, thus, $K:=n^o$ is used as an upper bound.

The objective in this case writes as follows:
\begin{align}
    \text{Minimize } \sum\limits_{j=1}^{n^o} \frac{1}{n^o} t_j^o - \sum\limits_{j=1}^{n^o} \frac{1}{n^o} r_j,
\end{align}
where we remember that $ \sum\limits_{j=1}^{n^o} \frac{1}{n^o} r_j$ is a constant. 

In addition to all constraints (\ref{eq:MIP_flow1})-(\ref{eq:MIP_t2}) from the previous section, the following must hold to define the completion times of the orders:
\begin{align}
    t_j^{o} \geq t^{k+1}_{l_d}-N(1-x_{kj}) \qquad & \forall j \in [n^o], \forall k \in [K+1]\\
    t_j^o \geq 0 \qquad & \forall j \in [n^o]
\end{align}
Similarly to (\ref{eq:Big_M_def}) the large parameter $N$ receives the value
\begin{align}
    N:=r_{n^o}+\frac{1}{v}\cdot (2\cdot L + (a+1)\cdot W) \cdot \lceil \frac{n^o}{c} \rceil + n^i \cdot t^p.
\end{align}

\subsection{Correctness of the formulated state graph}\label{sec:dp_correct}
The following proposition claims a one-to-one correspondence between the paths in the constructed state graph and the feasible solutions of OBSRP-R. 
\begin{proposition}
For a particular OBSRP-R instance, 
each path from the initial state to the terminal state in the constructed state graph corresponds to exactly one feasible solution for this instance and vice-versa. 

\end{proposition}\label{prop:1-1correspondence}
\begin{proof}
Given that the first entry in each state at stage $k\in[n^i]$ represents one picked item, selected in the previous transition either from set $S^{\text{batch}}$ or an order from sequence $O^{\text{pend}}$ which contain only outstanding items, the sequence of these entries in a path of states forms a visiting sequence of picking locations $\pi$ of Hamiltonian nature. A partitioning of this sequence into batches can be derived by closing one batch after each visit of a batch-completion state and opening a new batch with the picking item of the following state. By construction, the resulting batches form a disjoint partition of all $n^o$ orders of the instance, and each batch contains at most $c$ orders.

The other way around, each feasible solution translates to exactly one path of states connected by transitions from Table~\ref{tab:pcart_transitions} of Section~\ref{sec:problem_desc} by construction. 
\end{proof}

\subsection{Detailes on the Bellman equations}~\label{sec:details_bellman}
The \textit{immediate transition costs} represent the amount of \textit{time} necessary for the transition between states, as explained in Section~\ref{sec:cost_labeling}. They are important for the DP algorithm of the makespan objective (see Section~\ref{sec:cost_labeling}, and the DP algorithm of the turnover objective (see Section~\ref{sec:turnover_heuristic}).
In the following, we provide the detailed formulas. 

If the transition results in a state $\Theta_{k+1}$ with last-picked item $s_{k+1}$, it is composed of the maximum of the picker's walking time, and the waiting time for the release of $s_{k+1}$ -- plus the picking time $t^p$. If $\Theta_{k+1}$ is a batch-completion state, the walking time from $s_{k+1}$ to the depot $l_d$ is added. 
To account correctly for the waiting time, $g(\Theta_k,x_k)$ depends on the current time at the departure state, i.e. the time value $\Omega^{\text{*,cost}}(\Theta_k)$.  Let denote by $\mathcal{A}^{\text{batch-comp}}$ the subset of batch-completion states.
Altogether, the immediate costs of the transition from 
state $\Theta_k$, with value $\Omega^{\text{*,cost}}(\Theta_k)$, towards a state $\Theta_{k+1}=f(\Theta_k,x_k)$ is: 
\begin{align}
    & g^{\text{cost}}(\Theta_k,x_k) \nonumber \\ =& \begin{cases}
        \max\{\frac{d(s_k,s_{k+1})}{v}; r(s_{k+1})-\Omega^{*,\text{cost}}(\Theta_k)\}+t^p & \text{if }k< n^i,\Theta_k \notin \mathcal{A}^{\text{batch-comp}} \text{ and }\Theta_{k+1} \notin \mathcal{A}^{\text{batch-comp}} \\
        \max\{\frac{d(l_d,s_{k+1})}{v}; r(s_{k+1})-\Omega^{*,\text{cost}}(\Theta_k)\}+t^p & \text{if }k< n^i,\Theta_k \in \mathcal{A}^{\text{batch-comp}} \text{  and }\Theta_{k+1} \notin \mathcal{A}^{\text{batch-comp}}  \\
        \max\{\frac{d(s_k,s_{k+1})}{v}; r(s_{k+1})-\Omega^{*,\text{cost}}(\Theta_k)\}+t^p+\frac{d(s_{k+1},l_d)}{v}& \text{if }k< n^i,\Theta_k \notin \mathcal{A}^{\text{batch-comp}} \text{ and }\Theta_{k+1} \in \mathcal{A}^{\text{batch-comp}} \\
        \max\{\frac{d(l_d,s_{k+1})}{v}; r(s_{k+1})-\Omega^{*,\text{cost}}(\Theta_k)\}+t^p+\frac{d(s_{k+1},l_d)}{v}& \text{if }k<n^i,\Theta_k \in \mathcal{A}^{\text{batch-comp}}\text{ and }\Theta_{k+1} \in \mathcal{A}^{\text{batch-comp}} \\
        0 &\text{if }k=n^i
    \end{cases} \label{eq:immediate_cost}
\end{align}

\subsection{Proof of dominance rules}
In this section, we consider an arbitrary instance $I$ for OBSRP-R and show that the dominance rule claimed in  Section~\ref{sec:dominancerules} are valid for both objective functions - makespan and turnover minimization. 

For the proofs, we need the formulas for reconstructing an optimal \textit{picking schedule} (see discussion in Section~\ref{sec:obj_fun}) from a given picking sequence $\pi$ for instance $I$, and its associated batches $\pi^{\text{batches}}$.
The constructed schedule aims for each item to be collected at the \textit{earliest possible time} (which accounts for the release times of the orders).  This is achieved by applying the formulas from equations (\ref{eq:pcart1}) to (\ref{eq:pcart3}). For convenience, let denote the $i^{th}$ item in $\pi$ as $\pi[i]$.

\begin{align}
C(\pi[1])= &\max\{\frac{1}{v}\cdot d(l_d,\pi[1]),r(\pi[1])\}+t^p \label{eq:pcart1}\\
C(\pi[i+1])  = &
\begin{cases}
\max\{C(\pi[i])+\frac{1}{v}\cdot d(\pi[i],\pi[i+1]),r(\pi[i+1])\}+t^p,\\ \qquad  \qquad \text{if }\pi[i], \pi[i+1] \text{ belong to the same batch} \\ 
\max\{C(\pi[i])+\frac{1}{v}\cdot d(\pi[i],l_d) + \frac{1}{v}\cdot d(l_d,\pi[i+1]),r(\pi[i+1])\}+t^p, 
\\ \qquad \qquad \text{if } \pi[i], \pi[i+1] \text{ belong to distinct batches }  \label{eq:pcart3}
\end{cases} \nonumber \\
&  \forall i\in[n^i-1]  
\end{align}

The second case of equation (\ref{eq:pcart3}) occurs when the picker, having completed a batch with item $\pi[i]$, returns to the depot for unloading. Subsequently, the picker starts a new batch using a fresh cart, beginning with the collection of item $\pi[i+1]$.
\subsubsection{Proof of Proposition~\ref{prop:dom_in_batch_1} }\label{sec:proof_dom_in_batch_1}

We use Lemma~\ref{prop:neighbor_items} to define a set $D_1$ of feasible solutions for OBSRP-R instance $I$ that are weakly dominated by other feasible solutions. In the second step, Lemma~\ref{lem:association_trans_path_1} states that every path from the initial state to the terminal state in the constructed state graph - refered to as a \textit{complete path} in the following, which involves a transition $x_k, k\in[n^i-1]$, from a label of state $\Theta_k$ with value $\Omega^{\text{*,cost}}$ as described by Proposition~\ref{prop:dom_in_batch_1}, is associated to a solution in set $D_1$.  

\begin{lemma}~\label{prop:neighbor_items}
Consider any feasible solution $\hat{\sigma}$ with a visiting sequence of picking locations $\hat{\pi}$  and the respective sequence of batches $\hat{\pi}^{\text{batches}}$ such that some item $\hat{\pi}[i+1]$ has a late release date:
\begin{align}
r(\hat{\pi}[i+1]) \geq C(\hat{\pi}[i])  + \frac{1}{v} \cdot (d(\hat{\pi}[i],s^{\dagger})+d(s^{\dagger},\hat{\pi}[i+1])) +t^{p}\label{eq:hilf0}
\end{align}
for some fixed $s^{\dagger} \in o_{\hat{j}}$, such that $s^{\dagger}=\hat{\pi}[i+k]$ for some $k>1$, i.e. $s^{\dagger}$ is picked \textit{after} $\hat{\pi}[i+1]$ in $\hat{\sigma}$, and, such that the order $o_{\hat{j}}$ is already commenced at the picking of item $\hat{\pi}[i+1]$, i.e. some item of $o_{\hat{j}}$ is picked before $\hat{\pi}[i+1]$.

Let decompose the visiting sequence of $\hat{\sigma}$ as follows: $\hat{\pi}=(\hat{\pi}^I,\hat{\pi}[i+1],\hat{\pi}^{II} )$, i.e. $\hat{\pi}^I$ and $\hat{\pi}^{II}$ denote the subsequences \textit{before} and  \textit{after} the visit of location  $\hat{\pi}[i+1]$, respectively. 

Then the following solution $\tilde{\sigma}$ weakly dominates solution $\hat{\sigma}$:
\begin{itemize}
 \item $\pi^{\text{batches}}(\tilde{\sigma})=\hat{\pi}^{\text{batches}}$
\item $\pi(\tilde{\sigma})=(\hat{\pi}^I, s^{\dagger},\hat{\pi}[i+1], \hat{\pi}^{II}\setminus s^{\dagger})$, where $\hat{\pi}^{II}\setminus s^{\dagger}$ is the sequence $\hat{\pi}^{II}$ after removing item $s^{\dagger}$.\\
In other words, solution $\tilde{\sigma}$ collects item $s^{\dagger}$ immediately before item $\hat{\pi}[i+1]$.
\end{itemize}
\end{lemma}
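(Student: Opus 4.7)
The claim is that moving the ``commenced'' item $s^{\dagger}$ backward so that it is picked immediately after $\hat{\pi}[i]$ (and immediately before $\hat{\pi}[i+1]$) does not worsen either objective. My plan is to verify the proposed $\tilde{\sigma}$ is feasible, then compare the picking schedules of $\hat{\sigma}$ and $\tilde{\sigma}$ item-by-item using the earliest-possible-pick formulas~(\ref{eq:pcart1})--(\ref{eq:pcart3}), and finally argue that weak dominance of all item completion times yields weak dominance of both the makespan and the turnover objective.

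\textbf{Step 1 (feasibility).} Let $[p_1,p_\ell]$ be the consecutive positions occupied in $\hat{\pi}$ by the batch containing $o_{\hat{j}}$. Because some item of $o_{\hat{j}}$ is picked before position $i+1$ and $s^{\dagger}\in o_{\hat{j}}$ is picked at position $i+k$, we have $p_1\le i$ and $p_\ell\ge i+k$. Hence positions $i,i+1,\ldots,i+k$ all lie inside this batch and, since no depot return occurs inside a batch, sliding $s^{\dagger}$ from position $i+k$ to position $i+1$ leaves the set of positions of this batch unchanged. Thus $\tilde{\sigma}$ respects the (unchanged) batching capacity and induced batch structure.

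\textbf{Step 2 (anchoring $s^{\dagger}$ and $\hat{\pi}[i+1]$ in $\tilde{\sigma}$).} The key observation is that the release time $r(s^{\dagger})=r_{\hat{j}}$ has already been reached by the time $\hat{\pi}[i]$ is picked: since some item of $o_{\hat{j}}$ is collected no later than position $i$ in $\hat{\sigma}$, (\ref{eq:pcart1})--(\ref{eq:pcart3}) give $r_{\hat{j}} \le \hat C(\hat{\pi}[i])-t^{p} \le \hat C(\hat{\pi}[i])+\tfrac{1}{v}d(\hat{\pi}[i],s^{\dagger})$. Therefore no waiting occurs at $s^{\dagger}$ in $\tilde{\sigma}$ and $\tilde C(s^{\dagger}) = \hat C(\hat{\pi}[i]) + \tfrac{1}{v}d(\hat{\pi}[i],s^{\dagger}) + t^{p}$. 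Combining this with the lemma's hypothesis~(\ref{eq:hilf0}) yields $\tilde C(s^{\dagger})+\tfrac{1}{v}d(s^{\dagger},\hat{\pi}[i+1]) \le r(\hat{\pi}[i+1])$, so the waiting for $\hat{\pi}[i+1]$'s release absorbs everything and $\tilde C(\hat{\pi}[i+1]) = r(\hat{\pi}[i+1]) + t^{p}$. Since $\hat C(\hat{\pi}[i+1]) \ge r(\hat{\pi}[i+1]) + t^{p}$ trivially, we get $\tilde C(\hat{\pi}[i+1]) \le \hat C(\hat{\pi}[i+1])$; and also $\tilde C(s^{\dagger}) \le \hat C(s^{\dagger})$ because $\hat C(s^{\dagger}) \ge \hat C(\hat{\pi}[i+1])+t^{p} \ge r(\hat{\pi}[i+1])+t^{p} \ge \tilde C(s^{\dagger})$ using~(\ref{eq:hilf0}) once more.

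\textbf{Step 3 (propagation by induction).} I will show, by induction on the position in $\tilde{\pi}$, that $\tilde C(\hat{\pi}[j]) \le \hat C(\hat{\pi}[j])$ for every $j\ne i+k$. For positions in the interval $[i+2,i+k-1]$, the items appear consecutively in both sequences (only shifted by one in $\tilde{\pi}$), all lie in the same batch, so the walking distances and depot returns in the two schedules match; the bound propagates directly from~(\ref{eq:pcart1})--(\ref{eq:pcart3}) and Step~2. The delicate position is $i+k+1$: in $\hat{\pi}$ the transition goes $\hat{\pi}[i+k-1]\!\to\!s^{\dagger}\!\to\!\hat{\pi}[i+k+1]$ (possibly via the depot, if $s^{\dagger}$ closes its batch), while in $\tilde{\pi}$ it is the direct transition $\hat{\pi}[i+k-1]\!\to\!\hat{\pi}[i+k+1]$ (with the same depot return if applicable). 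Two applications of the triangle inequality ($d(x,y)\le d(x,s^{\dagger})+d(s^{\dagger},y)$ and $d(x,l_d)\le d(x,s^{\dagger})+d(s^{\dagger},l_d)$) together with the omission of one pick time $t^{p}$ show that the ``forward-progress'' term in $\tilde{\sigma}$ is dominated by that in $\hat{\sigma}$, so the max with the release time preserves the bound. From $i+k+1$ onwards the two sequences coincide verbatim, so a straightforward recursion on~(\ref{eq:pcart1})--(\ref{eq:pcart3}) pushes the bound all the way to the terminal item.

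\textbf{Step 4 (conclusion).} Having $\tilde C(s)\le \hat C(s)$ for every $s\in S$ yields $z^{\text{makespan}}(\tilde{\sigma})\le z^{\text{makespan}}(\hat{\sigma})$ by~(\ref{eq:obj_pushcart_makespan}); and since each order's completion time is a maximum over its items (see~(\ref{eq:order_comp})) and the batches coincide, $C(\tilde{\sigma},o_j)\le C(\hat{\sigma},o_j)$ for every $j$, hence $z^{\text{turnover}}(\tilde{\sigma})\le z^{\text{turnover}}(\hat{\sigma})$. I expect the \emph{main obstacle} to be the bookkeeping at the ``gap'' left by $s^{\dagger}$ at position $i+k$: the possibility that $s^{\dagger}$ was the final item of its batch (so a depot return must be repositioned) and the interplay with waiting times require handling two subcases, in both of which the conclusion relies on the triangle inequality plus the non-waiting identity obtained in Step~2.
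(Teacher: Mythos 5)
Your proof is correct and follows essentially the same route as the paper's: reconstruct both schedules with the earliest-possible-pick recursion, observe that no waiting occurs at $s^{\dagger}$ because its order is already released, use hypothesis~(\ref{eq:hilf0}) to collapse $\tilde{C}(\hat{\pi}[i+1])$ to $r(\hat{\pi}[i+1])+t^{p}\leq \hat{C}(\hat{\pi}[i+1])$, and propagate the item-wise domination forward via the triangle inequality to conclude for both objectives. Your treatment of the gap at position $i+k$ (including the repositioned depot return) is in fact spelled out more carefully than in the paper, which dispatches that step in one sentence.
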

\begin{proof}
By construction, $\tilde{\sigma}$ is a feasible solution. What remains to show, is that $z^{obj}(\tilde{\sigma})\leq z^{obj}(\hat{\sigma})$, where $obj\in\{$makespan, turnover\}, depending on the objective.

Let compute the \textit{schedule} of both solutions $\tilde{\sigma}$ and $\hat{\sigma}$ as described in (\ref{eq:pcart1})-(\ref{eq:pcart3}). Since some items of order $o_{\hat{j}}$ are picked before item $\hat{\pi}[i+1]$ and since $s^{\dagger}$ also belongs to order $o_{\hat{j}}$:
\begin{align}
r(s^{\dagger})\leq C(\hat{\pi}[i]) \label{eq:hilf1}
\end{align}

In both $\tilde{\sigma}$ and  $\hat{\sigma}$, items $\hat{\pi}[i],\hat{\pi}[i+1]$ and $s^{\dagger}$ are in the same batch $B(\hat{\sigma},o_{\hat{j}})$ and the visiting sequences of the first $i$ items are the same in $\tilde{\sigma}$ and $\hat{\sigma}$.
Thus:
\begin{align}
    C(\hat{\pi}[l],\tilde{\sigma})= C(\hat{\pi}[l],\hat{\sigma}) \text{ for all }l\leq i
\end{align}
and 
\begin{align}
 C(o_j,\tilde{\sigma})= C(o_j,\hat{\sigma}) \text{ for all } o_j \text{ scheduled in batches prior to } B(\hat{\sigma},o_{\hat{j}}) \text { in }\hat{\pi}^{\text{batches}}
\end{align}

By the triangle inequality of the distance metric, we furthermore have: $C(s^{\dagger},\tilde{\sigma})\leq  C(s^{\dagger},\hat{\sigma})$.

Observe that in solution $\tilde{\sigma}$, item $\hat{\pi}[i+1]$ is picked immediately after visiting location $s^{\dagger}$. Furthermore,  By applying (\ref{eq:pcart1})-(\ref{eq:pcart3}), the completion time of item $\hat{\pi}[i+1]$ in solution $\tilde{\sigma}$ is:
\begin{align}
C(\hat{\pi}[i+1], \tilde{\sigma}) = & \max\{C(s^{\dagger})+\frac{1}{v} d(s^{\dagger},\hat{\pi}[i+1] ),r(\hat{\pi}[i+1])\}+ t^{p} \\
= &\max\{\max\{C(\hat{\pi}[i])+\frac{1}{v} d(\hat{\pi}[i],s^{\dagger}),r(s^{\dagger})\}+ t^{p}+\frac{1}{v} d(s^{\dagger},\hat{\pi}[i+1]), r(\hat{\pi}[i+1]) \} +t^{p}  \label{eq:D1_proof_1 }\\
= &\max\{C(\hat{\pi}[i])+\frac{1}{v} d(\hat{\pi}[i],s^{\dagger})+ t^{p}+\frac{1}{v} d(s^{\dagger},\hat{\pi}[i+1]), r(\hat{\pi}[i+1]) \}+t^{p}\label{eq:D1_proof_2 }\\
\leq & r(\hat{\pi}[i+1])+t^p,
\end{align}
where the equality (\ref{eq:D1_proof_1 }) follows from (\ref{eq:hilf1}) and the subsequent inequality (\ref{eq:D1_proof_2 }) follows from (\ref{eq:hilf0}). 

Similarly, by applying (\ref{eq:pcart1}) and (\ref{eq:pcart3}), the completion time of item $\hat{\pi}[i+1]$ in solution $\hat{\sigma}$ cannot be smaller, since:
\begin{align}
C(\hat{\pi}[i+1], \hat{\sigma}) = \max\{C(\hat{\pi}[i])+\frac{1}{v}\cdot d(\hat{\pi}[i],\hat{\pi}[i+1]), r(\hat{\pi}[i+1]) \} +t^p \geq  r(\hat{\pi}[i+1])+t^p
\end{align}

In other words:
\begin{align}
C(\hat{\pi}[i+1], \tilde{\sigma})\leq C(\hat{\pi}[i+1], \hat{\sigma})
\end{align}

After applying (\ref{eq:pcart3}) to compute the completion times of the remaining items and observing that the distances are metric and that the sequences of the visiting locations after picking $\hat{\pi}[i+1]$ coincide in $\tilde{\sigma}$ and $\hat{\sigma}$, but $\tilde{\sigma}$ excludes the location of the already picked item $s^{\dagger}$, we receive that $C(\tilde{\pi}[l],\tilde{\sigma})\leq C(\tilde{\pi}[l],\hat{\sigma})$ for all $l\geq i+1$, and thus by (\ref{eq:order_comp}) that $C(o_j,\tilde{\sigma})\leq C(o_j,\hat{\sigma})$ for all orders $o_j$ in batch $B(\hat{\sigma},o_{\hat{j}})$ and batches scheduled after this batch. By definitions (\ref{eq:obj_pushcart_makespan}) and (\ref{eq:obj_turnover}) we receive that $z^{\text{makesp}}(\tilde{\sigma})\leq z^{\text{makesp}}(\hat{\sigma})$ or $z^{\text{turnover}}(\tilde{\sigma})\leq z^{\text{turnover}}(\hat{\sigma})$, depending on the objective.

 \end{proof}
We denote the set of the weakly dominated feasible solutions $\hat{\sigma}$ described by Lemma~\ref{prop:neighbor_items} as $D_1$.

In Proposition~\ref{prop:1-1correspondence}, we have established the one-to-one correspondence between feasible solutions of an instance $I$ and the complete paths of the corresponding state graph. Furthermore, every state $\Theta_k$ of the graph with a specific value $\Omega^{\text{*,cost}}(\Theta_k)$ represents exactly one sub-path to reach $\Theta_k$ from the initial state.   Lemma~\ref{lem:association_trans_path_1} uses this relation.

\begin{lemma}~\label{lem:association_trans_path_1}
Consider a feasible transition $x_k\in X(\Theta_k)$ from a state $\Theta_k=(s_k,m^o,S^{\text{batch}},O^{\text{pend}})$ with $\vert S^{\text{batch}} \vert \geq 1$, and an associated value $\Omega^{*,\text{cost}}(\Theta_k)$, that dictates a next picking location $s_{k+1}\in o_j, o_j\in O^{\text{pend}}$ with the following property for some  $s^{\dagger}\in S^{\text{batch}}$
\begin{align}
r_j \geq  \Omega^{\text{*,cost}}(\Theta_k) + \frac{1}{v} \cdot (d(s_k,s^{\dagger})+d(s^{\dagger},s_{k+1})) +t^{p}  \label{eq:exactD1}
\end{align}
Then, any complete path that involves $x_k$ belongs to the class of dominated solutions $D_1$ described by Lemma~\ref{prop:neighbor_items}.
\end{lemma}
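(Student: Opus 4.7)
The plan is to exploit the one-to-one correspondence established in Proposition~\ref{prop:1-1correspondence}: every complete path in the state graph that uses the transition $x_k$ out of $\Theta_k$ corresponds to a unique feasible solution $\hat\sigma$ of instance $I$, whose picking sequence $\hat\pi$ satisfies $\hat\pi[k]=s_k$ and $\hat\pi[k+1]=s_{k+1}$. Along the chosen subpath that realises the value $\Omega^{*,\text{cost}}(\Theta_k)$, the DP recursion~(\ref{eq:Bellman_makesp}) together with the definition of $g^{\text{cost}}$ yields exactly $C(\hat\pi[k],\hat\sigma)=\Omega^{*,\text{cost}}(\Theta_k)$. Substituting this into the hypothesis~(\ref{eq:exactD1}) reproduces the inequality~(\ref{eq:hilf0}) of Lemma~\ref{prop:neighbor_items} under the index identification $i=k$.

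Next, I would verify the remaining structural hypotheses of Lemma~\ref{prop:neighbor_items} by reading off information from the DP state. Because $s^{\dagger}\in S^{\text{batch}}$, the item $s^{\dagger}$ has not yet been picked at stage $k$ and, by the semantics of $S^{\text{batch}}$, belongs to an order $o_{\hat j}$ of the currently open batch that is \emph{already commenced}; hence $o_{\hat j}$ is commenced at the moment $\hat\pi[k+1]$ is picked. Moreover, since $s_{k+1}\in o_j$ with $o_j\in O^{\text{pend}}$ we have $o_j\neq o_{\hat j}$, so $s^{\dagger}\neq \hat\pi[k+1]$ and $s^{\dagger}$ must therefore occupy some later position $k+m$ in $\hat\pi$ with $m>1$. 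All hypotheses of Lemma~\ref{prop:neighbor_items} are thus met, which places $\hat\sigma\in D_1$ and completes the argument.

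The main obstacle is ensuring that the swapped solution $\tilde\sigma$ produced in Lemma~\ref{prop:neighbor_items} remains feasible with the \emph{same} batch partition $\hat\pi^{\text{batches}}$; this only works if $s^{\dagger}$ and $\hat\pi[k+1]$ lie in the same batch in $\hat\sigma$. This is precisely where the assumption $|S^{\text{batch}}|\geq 1$ enters: inspection of Table~\ref{tab:pcart_transitions} shows that from a state with $S^{\text{batch}}\neq \emptyset$ no transition can close the current batch (only lines 7--13 are applicable, each of which extends the open batch), so $s^{\dagger}$, $s_k$ and $s_{k+1}$ all belong to a common batch of $\hat\sigma$. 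Once this observation is recorded, the remainder of the proof is a routine bookkeeping exercise.
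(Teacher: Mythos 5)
Your proof is correct and follows essentially the same route as the paper's: identify the complete path with a feasible solution via the path--solution correspondence, observe that $C(s_k,\hat\sigma)=\Omega^{*,\text{cost}}(\Theta_k)$ along the value-realising subpath, and translate hypothesis~(\ref{eq:exactD1}) into condition~(\ref{eq:hilf0}) of Lemma~\ref{prop:neighbor_items}; you merely spell out the structural hypotheses (commenced order, $s^{\dagger}$ picked later, common batch) that the paper leaves implicit. One small imprecision: lines 12--13 of Table~\ref{tab:pcart_transitions} \emph{do} close the batch from a state with $S^{\text{batch}}\neq\emptyset$, but they pick $s_{k+1}$ from $S^{\text{batch}}$ rather than from a pending order, so your conclusion that the transition at hand keeps $s^{\dagger}$, $s_k$ and $s_{k+1}$ in one batch still stands.
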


\begin{proof}
First, note that the items $s_{k+1}$ and $s^{\dagger}$ of Lemma~\ref{lem:association_trans_path_1} correspond to items $\hat{\pi}[i+1]$ an $s^{\dagger}$ in Lemma~\ref{prop:neighbor_items}, respectively, and that $r(s_{k+1})=r_j$ given that $s_{k+1}\in o_j$.  
Let $P^{\text{sub}}$ be the path of transitions from the initial state to state $\Theta_k$ that is associated to the value $\Omega^{\text{*,cost}}(\Theta_k)$. Consider any complete path that uses $P^{\text{sub}}$ as a sub-path, followed by $x_k$, denote by $\hat{\sigma}$ the corresponding feasible solution as by Proposition~\ref{prop:1-1correspondence}. The completion time of $s_k$ in  $\hat{\sigma}$  is $C(s_k,\hat{\sigma})=\Omega^{\text{*,cost}}(\Theta_k)$ by definition.  Because of (\ref{eq:exactD1}), property (\ref{eq:hilf0}) holds for $\hat{\sigma}$ and therefore, $\hat{\sigma}$ belongs to the set of weakly dominated solutions $D_1$. 

 \end{proof}

Finally, note that the right-hand side of (D1) from Proposition~\ref{prop:dom_in_batch_1} is an \textit{upper bound} of the right-hand side of (\ref{eq:exactD1}), since the distance between any two locations in the warehouse is bounded by its dimensions, $L+W$. Thus, the conditions stated in Proposition~\ref{prop:dom_in_batch_1} imply Lemma~\ref{lem:association_trans_path_1}, which concludes the proof.

\subsubsection{Proof of Proposition~\ref{prop:dom_in_batch_2}} \label{sec:proof_dom_in_batch_2}
We proceed along similar lines as in Section~\ref{sec:proof_dom_in_batch_1}. For an arbitrary instance $I$ of OBSRP-R, we use Lemma~\ref{prop:neighbor_items_2} to define the set $D_2$ of feasible solutions for $I$ that are weakly dominated by another
feasible solution. In the second step, Lemma~\ref{lem:association_trans_path_2} associates every complete path that uses $x_k, k \in [n^i -1]$ from state $\Theta_k$ with value $\Omega^{\text{*,cost}}(\Theta_k)$ as described by Proposition~\ref{prop:dom_in_batch_2}, to a solution in set $D_2$.

\begin{lemma}~\label{prop:neighbor_items_2}
Consider any feasible solution $\hat{\sigma}$ with a visiting sequence of picking locations $\hat{\pi}$  and the respective sequence of batches $\hat{\pi}^{\text{batches}}$ such that:
\begin{itemize}
\item Some item $\hat{\pi}[i+1]$ that belongs to some batch $\hat{B}_l\in \hat{\pi}^{\text{batches}}$ has a late release date:
\begin{align}
r(\hat{\pi}[i+1]) \geq C(\hat{\pi}[i])  + \frac{1}{v} \cdot (d(\hat{\pi}[i],l_d) + d(l_d,\hat{\pi}[i+1])) \label{eq:hilf0b}
\end{align}
\item This batch $\hat{B}_l=\hat{B}_l^I\cup \hat{B}_l^{II}$ can be partitioned into two sets of orders $\hat{B}_l^I$ and $\hat{B}_l^{II}$ such that
\begin{itemize}
\item $\hat{B}_l^I$ and $\hat{B}_l^{II}$ are picked subsequently in $\hat{\pi}^{\hat{B}_l}=(\hat{\pi}^{\hat{B}_l, I}, \hat{\pi}^{\hat{B}_l, II})$,
\item item $\hat{\pi}[i+1]$ is the first item in the sequence $\hat{\pi}^{\hat{B}_l, II}$.
\end{itemize}
\end{itemize}

Then the following solution $\tilde{\sigma}$ weakly dominates solution $\hat{\sigma}$:
\begin{itemize}
 \item Orders of $\hat{B}_l^I$ and $\hat{B}_l^{II}$ are picked in separate batches, whereas the remaining batches and their sequence remain the same: $\pi^{\text{batches}}(\tilde{\sigma})=(\hat{B}_1, \ldots, \hat{B}_{l-1}, \hat{B}_l^I, \hat{B}_l^{II}, \hat{B}_{l+1}, \ldots)$
\item $\pi(\tilde{\sigma})=\hat{\pi}$, i.e. the sequences of picking locations coincide.
\end{itemize}



\end{lemma}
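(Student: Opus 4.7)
The plan is to follow the structure of the proof of Lemma~\ref{prop:neighbor_items}, since both claims establish that a specific modification of a schedule weakly improves the objective. The first step is to verify that $\tilde{\sigma}$ is feasible: splitting $\hat{B}_l$ into $\hat{B}_l^I$ and $\hat{B}_l^{II}$ preserves the disjoint partition of orders and both sub-batches respect the cart capacity (they are smaller than $\hat{B}_l$), and the picking schedule reconstructed via (\ref{eq:pcart1})--(\ref{eq:pcart3}) automatically respects all release times. Moreover, because $\pi(\tilde{\sigma})=\hat{\pi}$ and the batches preceding position $i+1$ are unchanged, the item completion times satisfy $C(\hat{\pi}[l],\tilde{\sigma})=C(\hat{\pi}[l],\hat{\sigma})$ for every $l\leq i$.

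The key step is to compare the completion time of item $\hat{\pi}[i+1]$ in both solutions. In $\hat{\sigma}$, items $\hat{\pi}[i]$ and $\hat{\pi}[i+1]$ belong to the same batch, so the first case of (\ref{eq:pcart3}) applies and gives
\begin{align}
C(\hat{\pi}[i+1],\hat{\sigma})=\max\Bigl\{C(\hat{\pi}[i])+\tfrac{1}{v}d(\hat{\pi}[i],\hat{\pi}[i+1]),\,r(\hat{\pi}[i+1])\Bigr\}+t^p. \nonumber
\end{align}
In $\tilde{\sigma}$, these two items lie in distinct batches, so the second case of (\ref{eq:pcart3}) yields
\begin{align}
C(\hat{\pi}[i+1],\tilde{\sigma})=\max\Bigl\{C(\hat{\pi}[i])+\tfrac{1}{v}\bigl(d(\hat{\pi}[i],l_d)+d(l_d,\hat{\pi}[i+1])\bigr),\,r(\hat{\pi}[i+1])\Bigr\}+t^p. \nonumber
\end{align}
Condition (\ref{eq:hilf0b}) forces the release time to dominate inside the max in $\tilde{\sigma}$, while the triangle inequality ensures the same dominance in $\hat{\sigma}$. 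Hence both expressions collapse to $r(\hat{\pi}[i+1])+t^p$, so $C(\hat{\pi}[i+1],\tilde{\sigma})=C(\hat{\pi}[i+1],\hat{\sigma})$.

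From this identity, a forward induction along (\ref{eq:pcart3}) over positions $i+2,\ldots,n^i$ shows that item completion times agree on the entire suffix of $\hat{\pi}$, since the item sequence and the inter-item distances coincide in both solutions, and the only possible difference (a depot detour inside the suffix) has already been fully absorbed at position $i+1$. Consequently the makespans coincide, and the completion times of orders in $\hat{B}_l^{II}$ and in every batch scheduled after $\hat{B}_l$ are equal in $\hat{\sigma}$ and $\tilde{\sigma}$. For orders in $\hat{B}_l^I$, however, the depot return that marks their completion occurs strictly earlier in $\tilde{\sigma}$, at the end of $\hat{\pi}^{\hat{B}_l,I}$, rather than after the full batch $\hat{B}_l$ in $\hat{\sigma}$; in particular $C(o,\tilde{\sigma})\leq C(o,\hat{\sigma})$ for every $o\in \hat{B}_l^I$. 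By the definitions (\ref{eq:obj_pushcart_makespan}) and (\ref{eq:obj_turnover}) this yields both $z^{\text{makespan}}(\tilde{\sigma})\leq z^{\text{makespan}}(\hat{\sigma})$ and $z^{\text{turnover}}(\tilde{\sigma})\leq z^{\text{turnover}}(\hat{\sigma})$. The main obstacle is the cleanliness of the suffix induction, since one must confirm that the inserted depot detour is fully compensated by the slack at $\hat{\pi}[i+1]$ and does not propagate any delay downstream; inequality (\ref{eq:hilf0b}) is exactly the guarantee needed for this absorption, mirroring the closing argument of Lemma~\ref{prop:neighbor_items}.
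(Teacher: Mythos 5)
Your proof is correct and follows essentially the same route as the paper's: show via (\ref{eq:hilf0b}) and the triangle inequality that $C(\hat{\pi}[i+1],\cdot)$ collapses to $r(\hat{\pi}[i+1])+t^p$ in both solutions, propagate equality of item completion times through the recursion (\ref{eq:pcart1})--(\ref{eq:pcart3}) over the suffix, and conclude for both objectives. You are in fact a bit more careful than the paper in noting that orders in $\hat{B}_l^I$ complete weakly \emph{earlier} in $\tilde{\sigma}$ (the paper loosely says order completion times are ``identical''), but this is a refinement of the same argument, not a different one.
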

\begin{proof}
The proof proceeds along the same lines as the proof of Lemma~\ref{prop:neighbor_items}. By construction, $\tilde{\sigma}$ is a feasible solution. What remains to show, is that $z^{obj}(\tilde{\sigma})\leq z^{obj}(\hat{\sigma})$, where $obj\in\{$makespan, turnover\}, depending on the objective. 

We apply (\ref{eq:pcart3}) and notice that $C(\hat{\pi}[i+1], \tilde{\sigma}) \leq C(\hat{\pi}[i+1], \hat{\sigma})=r(\hat{\pi}[i+1]) + t^p$. By the recursive nature of equations (\ref{eq:pcart1})-(\ref{eq:pcart3}), all following items in $\hat{\pi}$ conserve their completion time and by (\ref{eq:order_comp}) order completion times are identical in $\tilde{\sigma}$ and $\hat{\sigma}$. The conclusion of the proof follows by the definition of the objective functions (\ref{eq:obj_pushcart_makespan}) and (\ref{eq:obj_turnover}).
 \end{proof}

Let denote the set of weakly dominated solutions $\hat{\sigma}$ introduced in Lemma~\ref{prop:neighbor_items_2} as $D_2$.  Lemma~\ref{lem:association_trans_path_2} associates each path through the state graph that involves a transition $x_k$ described by Proposition~\ref{prop:dom_in_batch_2} to a weakly dominated solution, since, again, the right-hand side of (D2) is an \textit{upper bound} of the right-hand side of (\ref{eq:dom_rule_2_proof}), and this closes the proof.

\begin{lemma}~\label{lem:association_trans_path_2}
Consider a feasible transition $x_k\in X(\Theta_k)$ from a state $\Theta_k=(s_k,m^o,\{\},O^{\text{pend}})$ with $m^o \geq 1$, that dictates a next picking location $s_{k+1}\in o_j, o_j\in O^{\text{pend}}$ with the following property  
\begin{align}
r_j \geq  \Omega^{\text{time}}(\Theta_k) + \frac{1}{v} (d(s_k,l_d) + d(l_d,s_{k+1}))   \label{eq:dom_rule_2_proof}
\end{align}
Then, any complete path that involves $x_k$  belongs to the class of dominated solutions $D_2$ described by Lemma~\ref{prop:neighbor_items}.
\end{lemma}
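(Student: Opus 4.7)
The plan is to mirror the argument already used for Lemma~\ref{lem:association_trans_path_1}, leveraging the one-to-one correspondence from Proposition~\ref{prop:1-1correspondence} and invoking Lemma~\ref{prop:neighbor_items_2} on a concretely identified partition of a batch.

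First, I would take the sub-path $P^{\text{sub}}$ from the initial state to $\Theta_k$ whose accumulated value is $\Omega^{*,\text{cost}}(\Theta_k)$. Any complete path that uses $P^{\text{sub}}$ followed by $x_k$ corresponds, via Proposition~\ref{prop:1-1correspondence}, to a feasible solution $\hat{\sigma}$. By construction of the Bellman recursion (\ref{eq:Bellman_makesp}) together with the schedule-reconstruction formulas (\ref{eq:pcart1})--(\ref{eq:pcart3}), the completion time of $s_k$ in $\hat{\sigma}$ satisfies $C(s_k,\hat{\sigma}) = \Omega^{*,\text{cost}}(\Theta_k)$.

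Second, I would identify within $\hat{\sigma}$ the batch $\hat{B}_l$ containing item $s_k$ and show that it admits the partition required by Lemma~\ref{prop:neighbor_items_2}. Because $S^{\text{batch}}=\{\}$ and $m^o\geq 1$ at state $\Theta_k$, the $m^o$ orders already assigned to the open batch have had all of their items collected up to and including $s_k$; and by the relevant line of Table~\ref{tab:pcart_transitions}, transition $x_k$ adds order $o_j$ to this batch and then picks $s_{k+1}\in o_j$. Letting $\hat{B}_l^{I}$ be the $m^o$ already-commenced orders and $\hat{B}_l^{II} := \hat{B}_l\setminus \hat{B}_l^{I}$ the remainder (which contains $o_j$ and any further orders that will be appended later in the same batch), the picking sub-sequences $\hat{\pi}^{\hat{B}_l,I}$ and $\hat{\pi}^{\hat{B}_l,II}$ are picked consecutively in $\hat{\pi}$, with $\hat{\pi}^{\hat{B}_l,I}$ ending at $s_k=\hat{\pi}[i]$ and $\hat{\pi}^{\hat{B}_l,II}$ beginning with $s_{k+1}=\hat{\pi}[i+1]$.

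Third, I would verify the release-time hypothesis (\ref{eq:hilf0b}) of Lemma~\ref{prop:neighbor_items_2}. Substituting $C(\hat{\pi}[i],\hat{\sigma})=\Omega^{*,\text{cost}}(\Theta_k)$, $\hat{\pi}[i]=s_k$, and $\hat{\pi}[i+1]=s_{k+1}\in o_j$ reduces (\ref{eq:hilf0b}) to exactly the inequality (\ref{eq:dom_rule_2_proof}) assumed in Lemma~\ref{lem:association_trans_path_2}. Hence $\hat{\sigma}$ meets the conditions of Lemma~\ref{prop:neighbor_items_2} and therefore belongs to $D_2$.

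The main obstacle I anticipate is the bookkeeping in the partition step: one must be careful that at a state with $S^{\text{batch}}=\{\}$ the items of $\hat{\pi}$ collected \emph{within} $\hat{B}_l$ before $s_k$ correspond precisely to the $m^o$ orders counted in the state (no stragglers, no leftover items), so that $\hat{B}_l^{I}$ and $\hat{B}_l^{II}$ form a disjoint partition of $\hat{B}_l$ whose sub-sequences appear consecutively. Once this is spelled out, the remainder of the argument is a direct translation of the proof of Lemma~\ref{lem:association_trans_path_1}, and the fact that (D2) from Proposition~\ref{prop:dom_in_batch_2} upper-bounds the right-hand side of (\ref{eq:dom_rule_2_proof}) then closes the proof of Proposition~\ref{prop:dom_in_batch_2}.
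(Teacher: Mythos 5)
Your proposal is correct and follows essentially the same route as the paper's own proof: it uses the correspondence from Proposition~\ref{prop:1-1correspondence} to obtain $C(s_k,\hat{\sigma})=\Omega^{*,\text{cost}}(\Theta_k)$, decomposes the batch into the $m^o$ fully-collected orders $\hat{B}_l^{I}$ and the still-unprocessed orders $\hat{B}_l^{II}$ (exactly the partition the paper identifies from $S^{\text{batch}}=\{\}$ and $m^o\geq 1$), and observes that condition (\ref{eq:dom_rule_2_proof}) then instantiates hypothesis (\ref{eq:hilf0b}) of Lemma~\ref{prop:neighbor_items_2}. The only difference is that you spell out the bookkeeping of the partition slightly more explicitly than the paper does, which is harmless.
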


\begin{proof}
First, note that the item $s_{k+1}$ of Lemma~\ref{lem:association_trans_path_2} corresponds to item $\hat{\pi}[i+1]$ in Lemma~\ref{prop:neighbor_items_2}, and that $r(s_{k+1})=r_{j}$ since $s_{k+1}\in o_{j}$. Given that $ S^{\text{batch}}=\{\} $ and $m^o\geq 1$ in state $\Theta_k$, item $s_{k+1}$ belongs to batch $\hat{B}_l$ that can be decomposed as follows: $\hat{B}_l=\hat{B}_l^I\cup \hat{B}_l^{II}$, where $\hat{B}_l^I$ is a set of orders for which all items have already been picked by the time state $\Theta_k$ is reached, and $\hat{B}_l^{II}$ is a set of completely unprocessed orders at state $\Theta_k$, one of which includes item $s_{k+1}$.

Using the same arguments as in the proof of Lemma~\ref{lem:association_trans_path_1}, each complete path of the state graph that involves transition $x_k\in X(\Theta_k)$ from state $\Theta_k$ with value $\Omega^{\text{*,cost}}(\Theta_k)$ completes the picking of item $s_k$ at time $C(s_k,\hat{\sigma})=\Omega(\Theta_k)$, thus property (\ref{eq:dom_rule_2_proof}) translates to property (\ref{eq:hilf0b}), and the path is associated with a weakly dominated solution in the set $D_2$. 
 \end{proof}
\subsubsection{Proof of Proposition~\ref{prop:dom_batch-closure}} \label{sec:proof_dom_batch_closure}

Again, the proof of Proposition~\ref{prop:dom_batch-closure} follows the same lines as the one presented in Section~\ref{sec:proof_dom_in_batch_1}.
Consider an instance $I$ for OBSRP-R. Lemma~\ref{prop:neighbor_batches} identifies a set $D_3$ of feasible solutions, that are weakly dominated by another feasible solution. 

\begin{lemma}~\label{prop:neighbor_batches}
Consider any feasible solution $\hat{\sigma}$ with a visiting sequence of picking locations $\hat{\pi}=(\hat{\pi}^I, \hat{\pi}^{II})$ and the respective sequence of batches $\hat{\pi}^{\text{batches}}, \vert \hat{\pi}^{\text{batches}}\vert =\hat{f}\in \mathbb{N}$ such that:
\begin{itemize}
\item 
The second subsequence $\hat{\pi}^{II}$ contains the batches $\hat{B}_{l}\cup\ldots\cup\hat{B}_{\hat{f}}$.
for some $l\in[\hat{f}]$. Let denote the
\textit{first location} of the second subsequence as $\hat{\pi}[i+1]$. 
\item There exists an order $o_{\underline{j}}\in \hat{B}_{l}\cup\ldots\cup\hat{B}_{\hat{f}}$, whose items are picked in the second subsequence $\hat{\pi}^{II}$ in $\hat{\sigma}$, which satisfies the following relation:
\begin{align}
r(\hat{\pi}[i+1]) \geq \max\{C^i, r_{\underline{j}} \} + \chi(o_{\underline{j}}) + \frac{1}{v} \cdot d(l_d,\hat{\pi}[i+1]), \label{eq:dom_batch_clos_proof}
\end{align}
\end{itemize}
where $C^i:= C(\hat{\pi}[i],\hat{\sigma})+ \frac{1}{v} \cdot d(\hat{\pi}[i],l_d)$ if $i\neq 0$ and $C^i:=0$ else; and $\chi(o_{\underline{j}}):=$ minimum time to pick order $o_{\underline{j}}$ in a single-order batch.

Then the following solution $\tilde{\sigma}$ is feasible and weakly dominates $\hat{\sigma}$:
\begin{itemize}
\item $\pi(\tilde{\sigma})=(\hat{\pi}^I, \pi^{\underline{j}}, \pi^{-\underline{j}})$ with $\pi^{\underline{j}}$ representing a visiting sequence of the picking locations in order $o_{\underline{j}}$ of travel distance $\chi(o_{\underline{j}})$, and $\pi^{-\underline{j}}|\hat{\pi}^{II}$ representing the subsequence of $\hat{\pi}^{II}$ after removing all the items of order $o_{\underline{j}}$.  In other words, solution $\tilde{\sigma}$ picks the items of order $o_{\underline{j}}$ directly after $\hat{\pi}^I$ and then resumes visiting the remaining locations in the same order as in $\hat{\sigma}$.
\item $\pi^{\text{batches}}(\tilde{\sigma})=(\hat{B}_{1},\ldots,\hat{B}_{l-1},\{o_{\underline{j}}\}, \hat{B}_{l}\setminus\{o_{\underline{j}}\}, \ldots \hat{B}_{\hat{f}}\setminus\{o_{\underline{j}}\}$),
i.e., in solution $\tilde{\sigma}$, order $o_{\underline{j}}$ is collected as a separate batch directly after $\hat{\pi}^I$.
\end{itemize}
\end{lemma}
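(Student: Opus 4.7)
The plan is to show feasibility of $\tilde\sigma$ first, then bound the completion times in $\tilde\sigma$ item by item using (\ref{eq:pcart1})–(\ref{eq:pcart3}), and finally translate those bounds into the claimed objective inequalities. The key observation that drives the whole argument is that (\ref{eq:dom_batch_clos_proof}) is exactly the condition guaranteeing that inserting the single-order batch $\{o_{\underline j}\}$ between $\hat\pi^I$ and $\hat\pi^{II}$ does \emph{not} delay reaching the first location of $\hat\pi^{II}$, because in $\hat\sigma$ the picker would anyway idle at $\hat\pi[i+1]$ until its release time.

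First I would verify feasibility. The batch partition in $\tilde\sigma$ is obtained from $\hat\pi^{\text{batches}}$ by splitting off the single order $o_{\underline j}$ from the batch that contains it and inserting it as a new batch $\{o_{\underline j}\}$ before $\hat B_l$, so capacity is trivially preserved. Release times are respected because the picking of $\pi^{\underline j}$ starts no earlier than $\max\{C^i, r_{\underline j}\} \ge r_{\underline j}$, and the completion times of items in $\pi^{-\underline j}$ will be bounded below by their release times as a consequence of the schedule comparison below.

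Next I would compare schedules along the sequence. By (\ref{eq:pcart1})–(\ref{eq:pcart3}) applied to $\tilde\sigma$, the single-order batch $\{o_{\underline j}\}$ completes (cart back at depot) at time at most $\max\{C^i, r_{\underline j}\} + \chi(o_{\underline j})$. Since the first item of $\pi^{-\underline j}$ is then picked at a time bounded by this value plus $\tfrac{1}{v}d(l_d,\hat\pi[i+1])+t^p$, the hypothesis (\ref{eq:dom_batch_clos_proof}) immediately gives
\begin{align*}
C(\hat\pi[i+1],\tilde\sigma) \le r(\hat\pi[i+1]) + t^p \le C(\hat\pi[i+1],\hat\sigma),
\end{align*}
where the right inequality just uses the release-time constraint in $\hat\sigma$. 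I would then extend this by induction along $\pi^{-\underline j}$: for any two consecutive items $u,w$ of $\pi^{-\underline j}$ that were separated in $\hat\pi^{II}$ by a (possibly empty) run of $o_{\underline j}$-items, the triangle inequality on $d(\cdot,\cdot)$ implies the direct walking time from $u$ to $w$ (or via the depot, if $u$ and $w$ are in different batches of $\tilde\sigma$) is no larger than the corresponding walking time in $\hat\sigma$; combined with the induction hypothesis and $r(w)\ge 0$ being unchanged, this propagates $C(w,\tilde\sigma)\le C(w,\hat\sigma)$.

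Finally I would combine these pointwise bounds with (\ref{eq:order_comp}) and the definitions (\ref{eq:obj_pushcart_makespan}),(\ref{eq:obj_turnover}) to conclude. For the makespan, $C(s,\tilde\sigma)\le C(s,\hat\sigma)$ for every $s$ directly gives $z^{\text{makespan}}(\tilde\sigma)\le z^{\text{makespan}}(\hat\sigma)$. For turnover, orders whose items lie entirely in $\hat\pi^I$ have identical completion times; $o_{\underline j}$ completes no later in $\tilde\sigma$ (in fact earlier in nondegenerate cases); and all remaining orders inherit the pointwise bound. The main obstacle I anticipate is the edge case where $\hat\pi[i+1]$ itself belongs to $o_{\underline j}$: then (\ref{eq:dom_batch_clos_proof}) collapses to a near-degenerate inequality and the ``first item of $\pi^{-\underline j}$'' is different from $\hat\pi[i+1]$. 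I would handle this by a short separate argument, noting that in this case $\chi(o_{\underline j})+\tfrac{1}{v}d(l_d,\hat\pi[i+1])$ must be small, so the extra detour taken in $\tilde\sigma$ ends before $\hat\sigma$ has even finished picking $\hat\pi[i+1]$, after which the induction proceeds as above.
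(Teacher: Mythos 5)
Your proposal is correct and follows essentially the same route as the paper's proof: feasibility by construction, the key bound $C(\hat{\pi}[i+1],\tilde{\sigma})\le r(\hat{\pi}[i+1])+t^p\le C(\hat{\pi}[i+1],\hat{\sigma})$ derived from (\ref{eq:dom_batch_clos_proof}), and forward propagation of the pointwise completion-time bound along the remaining sequence via the triangle inequality, followed by the conclusion from (\ref{eq:order_comp}), (\ref{eq:obj_pushcart_makespan}) and (\ref{eq:obj_turnover}). The only divergence is your worry about the case $\hat{\pi}[i+1]\in o_{\underline{j}}$, which is in fact vacuous: there $r(\hat{\pi}[i+1])=r_{\underline{j}}$ and the hypothesis would force $\chi(o_{\underline{j}})+\frac{1}{v}\cdot d(l_d,\hat{\pi}[i+1])\le 0$, so no nondegenerate instance satisfies it and no separate argument is needed.
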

\begin{proof}
By construction, $\tilde{\sigma}$ is a feasible solution, for instance, $\pi^{\text{batches}}(\tilde{\sigma})$ represents a mutually disjoint partition of the orders into batches and each batch contains at most $c$ orders. What remains to show, is that $z^{obj}(\tilde{\sigma})\leq z^{obj}(\hat{\sigma})$, where $obj\in\{$makespan, turnover\}, depending on the objective. 

Let compute the schedule of both solutions $\tilde{\sigma}$ and $\hat{\sigma}$ as described in (\ref{eq:pcart1})-(\ref{eq:pcart3}). 
Observe that the first $i$ items in $\tilde{\sigma}$ and $\hat{\sigma}$ are the same, thus all items and orders in $\hat{\pi}^I$ have the same completion times in both solutions. By definition, $C(o_{\underline{j}},\tilde{\sigma})\leq C(o_{\underline{j}},\hat{\sigma})$ . In solution $\tilde{\sigma}$, item $\hat{\pi}[i+1]$ is picked after all the items of order $o_{\underline{j}}$ are collected in some sequence $\pi^{\underline{j}}$.  Following the assumptions, and by applying (\ref{eq:pcart1})-(\ref{eq:pcart3}) the completion time of item $\hat{\pi}[i+1]$ in solution $\tilde{\sigma}$ is:
\begin{align}
C(\hat{\pi}[i+1], \tilde{\sigma}) \leq & \max\{\max\{C^i, r_{\underline{j}} \} + \chi(o_{\underline{j}})+ \frac{1}{v} d(l_d,\hat{\pi}[i+1]); r(\hat{\pi}[i+1])\}+t^p  \\
= & \ r(\hat{\pi}[i+1])+t^p,
\end{align}

The completion time of item $\hat{\pi}[i+1])$ in solution $\hat{\sigma}$ cannot be smaller by (\ref{eq:pcart1})-(\ref{eq:pcart3}),in other words:
\begin{align}
C(\hat{\pi}[i+1]), \tilde{\sigma})\leq C(\hat{\pi}[i+1]), \hat{\sigma})
\end{align}
After applying (\ref{eq:pcart3}) to compute the completion times of the remaining items and observing that the distances are metric and that the sequences of the visiting locations after picking $\hat{\pi}[i+1]$ coincide in $\tilde{\sigma}$ and $\hat{\sigma}$, but $\tilde{\sigma}$ excludes the locations of the items from the already picked order $o_{\underline{j}}$, we receive that all $C(s,\tilde{\sigma})\leq C(s,\hat{\sigma})$ and $C(o_j,\tilde{\sigma})\leq C(o_j,\hat{\sigma})$, for all items $s$ and orders $o_j$ completed after $\hat{\pi}[i+1]$ in $\tilde{\sigma}$. By definition of the objective functions, (\ref{eq:obj_pushcart_makespan}) and (\ref{eq:obj_turnover}), this completes the proof.
 \end{proof}

Let denote by $D\_3$ the set of weakly dominated solutions $\hat{\sigma}$ described by Lemma~\ref{prop:neighbor_batches}. Again, the following Lemma~\ref{lem:association_trans_path_3} uses the one-to-one correspondence between feasible solutions for an instance and complete paths in the corresponding state graph (see Proposition~\ref{prop:1-1correspondence}), to associate any transition $x_k$ from a label of state $\Theta_k$ with value $\Omega^{*,\text{cost}}$ described by Proposition~\ref{prop:dom_batch-closure} to a dominated solution in  $D_3$. Therefore, we must note that $\frac{1}{v}(2L+(a(o_{\underline{j}})+1)W)+\vert o_{\underline{j}} \vert \cdot t^p$, $a(o_{\underline{j}}):=$ aisle containing items from $o_{\underline{j}}$ is an \textit{upper bound} for $\chi(o_{\underline{j}})$, the the conditions described in Proposition~\ref{prop:dom_batch-closure} imply the ones of Lemma~\ref{lem:association_trans_path_3}.

\begin{lemma}~\label{lem:association_trans_path_3}
Consider a feasible transition $x_k\in X(\Theta_k)$ from a state $\Theta_k=(s_k,0,\{\},O^{\text{pend}})$ with value $\Omega^{\text{*,cost}}$ that dictates a next picking location $s_{k+1}\in o_{j}, o_{j} \in O^{\text{pend}}$ with the following property  
\begin{align}
r_{j}\geq \max\{ \Omega^{\text{*,cost}}(\Theta_k);  r_{\underline{j}}  \} +  \chi(o_{\underline{j}})+ \frac{1}{v} \cdot d(l_d,s_{k+1})      \label{eq:prop_proof}
\end{align}
for some $o_{\underline{j}}\in O^{\text{pend}}$. Then, any closed path in the state graph that involves this transition belongs to the set of dominated solutions $D_3$ described by Lemma~\ref{prop:neighbor_batches}.
\end{lemma}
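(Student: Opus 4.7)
The plan is to mirror the two-step scheme used for Lemmas~\ref{lem:association_trans_path_1} and~\ref{lem:association_trans_path_2}: first exploit the one-to-one correspondence between complete paths in the state graph and feasible solutions (Proposition~\ref{prop:1-1correspondence}), and then show that the hypothesis (\ref{eq:prop_proof}) on the transition $x_k$ is precisely the hypothesis (\ref{eq:dom_batch_clos_proof}) of Lemma~\ref{prop:neighbor_batches} once the relevant bookkeeping is unpacked. Concretely, fix an arbitrary complete path that uses the label of $\Theta_k=(s_k,0,\{\},O^{\text{pend}})$ with value $\Omega^{*,\text{cost}}(\Theta_k)$ and then the transition $x_k$; let $\hat{\sigma}$ be the associated feasible solution and let $P^{\text{sub}}$ be the sub-path from the initial state to $\Theta_k$ that realises $\Omega^{*,\text{cost}}(\Theta_k)$.

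Next, I would identify the structural objects of Lemma~\ref{prop:neighbor_batches} inside $\hat{\sigma}$. Since $\Theta_k$ is a batch-completion state, the picker has just returned the completed batch to $l_d$, so I take $\hat{\pi}^{I}$ to be the prefix of $\hat{\pi}$ consisting of all items picked along $P^{\text{sub}}$, with $\hat{\pi}[i]=s_k$ (and $i=0$ if no item has been picked yet). Item $\hat{\pi}[i+1]$ is then the next picking location dictated by $x_k$, i.e.\ $s_{k+1}$ with $r(\hat{\pi}[i+1])=r_j$. The remaining suffix $\hat{\pi}^{II}$ contains exactly the items of the orders in $O^{\text{pend}}$, so every candidate $o_{\underline{j}}\in O^{\text{pend}}$ of Lemma~\ref{lem:association_trans_path_3} automatically has all its items picked in $\hat{\pi}^{II}$, which is precisely the requirement of Lemma~\ref{prop:neighbor_batches}.

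The third step is a direct translation of the time bound. By the definition of the immediate-cost function and the fact that arriving at a batch-completion state already includes the return trip $\tfrac{1}{v}\cdot d(s_k,l_d)$, the value $\Omega^{*,\text{cost}}(\Theta_k)$ equals the time at which the picker is back at the depot with an empty cart after completing the previous batch; this is exactly $C^{i}$ in the notation of Lemma~\ref{prop:neighbor_batches} (and $0$ if $i=0$). Hypothesis (\ref{eq:prop_proof}) then reads verbatim as (\ref{eq:dom_batch_clos_proof}), and Lemma~\ref{prop:neighbor_batches} supplies a dominating solution $\tilde{\sigma}$, placing $\hat{\sigma}$ in $D_3$. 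The final remark -- that the right-hand side of (D3) in Proposition~\ref{prop:dom_batch-closure} majorises the right-hand side of (\ref{eq:prop_proof}) because $UB$ is an upper bound on $\chi(o_{\underline{j}})$ (Lemma~1 of \citet{Lorenza}) and $L+W$ bounds any depot distance -- then transports the conclusion to the actual dominance rule used inside the DP.

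The only delicate point I expect is the identification $\Omega^{*,\text{cost}}(\Theta_k)=C^{i}$: unlike in Lemmas~\ref{lem:association_trans_path_1}--\ref{lem:association_trans_path_2}, the relevant ``clock'' at a batch-completion state is not the completion time of the last-picked item $s_k$ but the time at which the picker is already at the depot, so one must carefully invoke the definition in (\ref{eq:immediate_cost}) (third and fourth cases) to justify that the return leg $\tfrac{1}{v}d(s_k,l_d)$ is already baked into $\Omega^{*,\text{cost}}(\Theta_k)$. Once this is verified, the rest of the argument is essentially a rewriting.
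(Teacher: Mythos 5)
Your proposal is correct and follows essentially the same route as the paper: invoke the path--solution correspondence of Proposition~\ref{prop:1-1correspondence}, identify $\hat{\pi}^{I}$ with the items picked along the sub-path realising $\Omega^{*,\text{cost}}(\Theta_k)$, set $\hat{\pi}[i+1]=s_{k+1}$, note that $\hat{\pi}^{II}$ consists exactly of the items of the orders in $O^{\text{pend}}$, and observe that at a batch-completion state the label value already contains the return leg, so $\Omega^{*,\text{cost}}(\Theta_k)=C^{i}$ and (\ref{eq:prop_proof}) becomes (\ref{eq:dom_batch_clos_proof}). Your careful handling of that last identification is in fact cleaner than the paper's own phrasing, which writes $\Omega^{*,\text{cost}}(\Theta_k)=C^{i}-\frac{1}{v}\cdot d(s_k,l_d)$ --- an apparent slip, since by the third and fourth cases of the immediate-cost definition the value of a batch-completion state includes the trip back to $l_d$, which is precisely what is needed for the implication to go through.
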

\begin{proof}
Note that the item $s_{k+1}$ of Lemma~\ref{lem:association_trans_path_3} corresponds to item $\hat{\pi}[i+1]$ in Lemma~\ref{prop:neighbor_batches}. Since $m^o=0$ in $\Theta_k$, a new batch is initiated by the pick of $s_{k+1}$.
Using the same arguments as in the proof of Lemma~\ref{lem:association_trans_path_1}, each complete path that involves transition $x_k\in X(\Theta_k)$ of  $\Theta_k$ with value $\Omega^{\text{*,cost}}(\Theta_k)$ completes the picking of item $s_k$ at time $\Omega^{\text{*,cost}}(\Theta_k)=C^i-\frac{1}{v}\cdot d(s_k,l_d)$. Therefore, property (\ref{eq:prop_proof}) translates to property (\ref{eq:dom_batch_clos_proof}), and thus the path is associated with a weakly dominated solution in the class $D_3$. 
\end{proof}

\subsection{Proof of analytical properties} \label{sec:selection_rate}
\subsubsection{Proof of Lemma~\ref{lemma:strategic_reloc}} \label{sec:proof_limited_relocation_gain}

For simplicity, let consider a picking time of $t^p=0$. The proof for non-zero picking times follows the same logic.
Let $s=(s_1,s_2,...,s_l)$ be the visiting sequence of picking locations \textit{and the depot}, of CIOS and ALG, starting with the first picking location. Let $done^{\text{CIOS}}(s_k)$ and $done^{\text{ALG}}(s_k)$ denote the time when CIOS and ALG, respectively, \textit{finish} their task (visit for the depot and picking for a picking location) at location $s_k$, $k\in [l]$. Let $dep^{\text{ALG}}(s_k)$ be the time the picker in ALG \textit{departs} from location $s_k, k \in [l]$. Recall that $r(s_k)$ for $s_k\in S$ denotes the arrival time of picking location $s_k$. We extend this notation by writing $r(s_k)=0$ if $s_k=l_d$.

\textit{Induction over $k\in [l]$}. \\
\textit{Initialization; $k=1$}. For the first picking location $s_1$, we have: 
\begin{align}
   done^{\text{ALG}}(s_1)- done^{\text{CIOS}}(s_1) \leq r(s_1) + d(l_d,s_1)- r(s_1) \leq \max_{i,j\in S \cup \{l_d\}}\{d(i,j)\}
\end{align}
\textit{Inductive hypothesis.} We suppose that for fixed $k\in [l]$
\begin{align}
    done^{\text{ALG}}(s_k)- done^{\text{CIOS}}(s_k) \leq  \max_{i,j\in S \cup \{l_d\}}\{d(i,j)\}
\end{align}
\textit{Inductive step, $k \rightarrow k+1$.}\\
We distinguish two cases. 

Case 1: $dep^{\text{ALG}}(s_k)=done^{\text{ALG}}(s_k)$, i.e. the picker in ALG departs immediately towards $s_{k+1}$ after  she reaches and terminates at location $s_k$, since $r(s_{k+1})\leq done^{\text{ALG}}(s_k)$. Then:
\begin{align}
   &done^{\text{ALG}}(s_{k+1})- done^{\text{CIOS}}(s_{k+1}) \nonumber \\
   =  \quad & done^{\text{ALG}}(s_k) + d(s_{k},s_{k+1}) - \max\{r(s_{k+1}); done^{\text{CIOS}}(s_k) + d(s_{k},s_{k+1})\} \\
   \leq \quad & done^{\text{ALG}}(s_k) + d(s_{k},s_{k+1}) -  done^{\text{CIOS}}(s_k) - d(s_{k},s_{k+1}) \\
   \leq  \quad &\max_{i,j\in S \cup \{l_d \}}\{d(i,j)\}
\end{align}
by the inductive hypothesis.

Case 2: $dep^{\text{ALG}}(s_k)=r(s_{k+1})$, i.e. the picker in ALG must wait for the arrival of $s_{k+1}$ after  she terminates at location $s_k$. Then:
\begin{align}
   &done^{\text{ALG}}(s_{k+1})- done^{\text{CIOS}}(s_{k+1}) \nonumber \\
   =  \quad & r(s_{k+1}) + d(s_{k},s_{k+1}) - \max\{r(s_{k+1}); done^{\text{CIOS}}(s_k) + d(s_{k},s_{k+1})\} \\
   \leq \quad & r(s_{k+1}) + d(s_{k},s_{k+1}) - r(s_{k+1})  \\
   \leq  \quad &\max_{i,j\in S \cup \{l_d\}}\{d(i,j)\}
\end{align}

\subsection{Additional computational results} \label{sec:intervention}

The positive effect of intervention was demonstrated in Figure~\ref{fig:avg_gaps_small} and Table~\ref{tab:gaps} of Section~\ref{sec:good_online_algs} for the Reopt policy with a VTW waiting strategy for $N=2$ orders in the queue. We show in Table~\ref{tab:intervention_policies} that this effect is stable across a large range of policies. Specifically, we extend the study of the gap to the perfect anticipation result with- and without intervention to Reopt with no waiting, and the FIFO policy, which performs optimal routing, FIFO-batching, and no waiting. For both policies, intervention \textit{reduced all average} gaps with the \textit{makespan} objective by impressive 1.7-4.0 percentage points, with the \textit{turnover} objective the reduction was even 6.0-20.8 percentage points.

\begin{table}
\caption{Effect of intervention on average- (worst-)  gap (\%) to the perfect anticipation result for non-interventionist policies} \label{tab:intervention_policies}
\vspace{0.2cm}
\centering
\scriptsize{
\begin{tabular}{@{}r|rr|rr|rr|rr}
\toprule
& \multicolumn{4}{c|}{\underline{Makespan objective} \vspace{0.5mm}} & \multicolumn{4}{c}{\underline{Turnover objective} \vspace{0.5mm}} \\
 & \multicolumn{2}{c}{Reopt} & \multicolumn{2}{c|}{FIFO} & \multicolumn{2}{c}{Reopt} & \multicolumn{2}{c}{FIFO}  \\
Setting ($n^o=15$) & Gap N-interv. & interv. &  Gap N-interv. & interv. &  Gap N-interv. & interv. & Gap N-interv. & interv.\\
\midrule
 $LargeOrders\_c2\_r200$  &  6.3 (11.2) & -2.8 (-3.6) & 7.5 (14.8) & -2.0 (-1.5) & 37.7 (54.0) & -13.3 (-14.3) & 48.3 (95.9) & -15.5 (-46.9)\\ 
 $SmallOrders\_c2\_r200$  & 5.1 (15.2) & -2.1 (-6.2) & 6.6 (17.2) & -1.7 (-1.7) &  46.7 (75.7) & -12.5 (-21.1) & 63.1 (93.3) & -10.5 (-5.3)\\ 
  $SmallOrders\_c2\_r250$  & 8.0 (16.6) & -3.4 (-6.0) & 8.8 (17.4) & -2.3 (-1.0) & 47.3 (101.1) & -18.1 (-48.6) & 61.6 (106.1) & -13.1 (-23.0)\\ 
 $SmallOrders\_c4\_r250$  & 8.6 (20.2) & -3.8 (-6.4) & 9.4 (25.9)& -4.0 (-5.6) & 49.7 (93.2) & -20.8 (-38.3) & 64.1 (97.1) & -6.4 (-5.0)\\ 
 Overall  & 7.0 (20.2) & -3.0 (-6.5)  & 8.1 (25.9) & -2.5 (-5.6) & 45.3 (101.1)& -16.6 (-46.5) & 59.3 (106.1) & -6.0 (-14.0)\\ 
    \midrule   
    \multicolumn{9}{l}{Note. \textit{Columns 2 \& 6 (4\& 8)Gap N-interv.:} Gap gaps of Reopt (FIFO) policy without intervention.  }\\
    \multicolumn{9}{l}{Note. \textit{Columns 3 \& 7 (5\& 9) interv.:} Effect on the gap of allowing intervention in Reopt (FIFO)  }\\
\end{tabular}
}
\end{table}

\end{document}